\newcommand{\rt}{\rightarrow}
\newcommand{\lrt}{\longrightarrow}
\newcommand{\st}{\stackrel}
\newcommand{\la}{\lambda}
\newcommand{\La}{\Lambda}
\newcommand{\D}{\mathbb{D} }
\newcommand{\K}{\mathbb{K} }
\newcommand{\Z}{\mathbb{Z} }
\newcommand{\CA}{\mathcal{A} }
\newcommand{\CM}{\mathcal{M} }
\newcommand{\CP}{\mathcal{P} }
\newcommand{\CQ}{\mathcal{Q} }
\newcommand{\CS}{\mathcal{S} }
\newcommand{\CT}{\mathcal{T} }
\newcommand{\CX}{\mathcal{X} }
\newcommand{\CY}{\mathcal{Y} }
\newcommand{\Mod}{{\rm{Mod\mbox{-}}}}
\newcommand{\mmod}{{\rm{{mod\mbox{-}}}}}
\newcommand{\Hom}{{\rm{Hom}}}
\theoremstyle{plain}
\newtheorem{theorem}{Theorem}[section]
\newtheorem{corollary}[theorem]{Corollary}
\newtheorem{lemma}[theorem]{Lemma}
\newtheorem{proposition}[theorem]{Proposition}
\theoremstyle{definition}
\newtheorem{definition}[theorem]{Definition}
\newtheorem{example}[theorem]{Example}
\newtheorem{construction}[theorem]{Construction}
\newtheorem{remark}[theorem]{Remark}
\theoremstyle{plain}
\theoremstyle{definition}
\numberwithin{equation}{section}
\begin{document}

\title[relative Singularity categories  and singular equivalences]{relative Singularity categories and singular equivalences}

\author[Rasool Hafezi]{Rasool Hafezi }

\address{School of Mathematics, Institute for Research in Fundamental Sciences (IPM), P.O.Box: 19395-5746, Tehran, Iran}
\email{hafezi@ipm.ir}

\subjclass[2010]{18E30, 18G35, 18G25, 16G10}

\keywords{Singularity category, singular equivalence, stable category}


\begin{abstract}
Let $R$ be a right notherian ring. We introduce the concept of relative singularity category $\Delta_{\CX}(R)$ of $R$ with respect to a contravariantly finite subcategory $\CX$ of $\mmod R.$ Along with some finiteness conditions on $\CX$, we prove that $\Delta_{\CX}(R)$ is triangle equivalent to a subcategory of  the homotopy category $\K_{\rm{ac}}(\CX)$ of exact complexes over $\CX$. As an application, a new description of the classical singularity category $\D_{\rm{sg}}(R)$ is given. The relative singularity categories are applied to lift a stable equivalence between two suitable subcategories of the module categories of two given right notherian rings to get a singular equivalence between the rings.  In different types of rings, including path rings, triangular matrix rings, trivial extension rings and tensor rings, we provide some consequences for  their singularity categories.
\end{abstract}

\maketitle
\section{introduction}
Let $R$ be a right notherian ring and $\mmod R $ the category of finitely generated right $R$-modules. The notion of {\it singularity category} of $R$ is defined to be Verdier quotient
$\D_{\rm{sg}}(R):=\frac{\D^{\rm{b}}(\mmod R)}{\K^{\rm{b}}(\rm{prj}\mbox{-}R)}$, where $\D^{\rm{b}}(\mmod R)$ denotes the bounded derived category, and $\K^{\rm{b}}(\rm{prj}\mbox{-}R)$ the homotopy category of bounded complexes whose terms are projective. This category was introduced by Buchweitz \cite{Bu} as a homological invariants of rings. If $R$ has finite global projective dimension, then every bounded complex
admits a finite projective resolution and therefore we have the equality $\D^{\rm{b}}(\mmod R)=\K^{\rm{b}}(\rm{prj}\mbox{-}R).$ In particular, the singularity category of a right notherian ring of finite global dimension is trivial. Hence, the singularity category provides a homological invariants for rings of infinite global dimension. The singularity  category is also used to detect some properties of a singularity. For example,  let $k$ be an algebraically closed field. A commutative complete Gorenstein
$k$-algebra $(R, m)$ satisfying $k=R/m$ has an isolated singularity if and only
if $\D_{\rm{sg}}(R)$ is a Hom-finite category, by work of Auslander \cite{A3}. Recently, the singularity category
 was applied by Orlove to study Landau-Ginzburg modules \cite{Or}.\\
Several relative versions of the singularity category are defined in the literatures by the different authors, for example see the papers \cite{W}, \cite{C5}, \cite{KY} and \cite{YZ2}. What we mean of the relative singularity categories is a special case of the relative ones given in \cite{W, C5}, and a generalization of the one studied in \cite{KY}. The relative concept of the singularity category here  is defined with  respect to a contravariantly finite subcategory of $\mmod R$. To be precise, let $\CX$ be a contravariantly finite subcategory of $\mmod R$ which contains $\rm{prj}\mbox{-}R.$ The relative singularity category of $R$ with respect to the subcategory $\CX$ is defined to be the Verdier quotient 
$$\Delta_{\CX}(R):= \frac{\D^{\rm{b}}(\mmod \CX)}{\CP},$$
where $\mmod \CX$ is the category of finitely presented contravariant functors from $\CX$ to the category of abelian groups, and $\CP$  the thick subcategory of the bounded derived category $\D^{\rm{b}}(\mmod \CX)$ generated by all functors $\Hom_R(-, P)\mid_{\CX},$ $P \in \rm{prj}\mbox{-}R$ (Definition \ref{Deefiniteion 1}). In the terminology  of \cite{AI} or \cite{IY}, the $\Delta_{\CX}(R)$ is only the {\it silting reduction} of $\D^{\rm{b}}(\mmod \CX)$ with respect to presilting subcategory $\CP$. It is shown in Theorem \ref{Theorem 3.1} the classical singularity category $\D_{\rm{sg}}(R)$ is a Verdier quotient of the relative singularity categories $\Delta_{\CX}(R)$. Hence due to this connection we try in this paper to use relative singularity categories to study the classical ones. The best cases of the subcategories $\CX$ might be more helpful are when the relative global $\CX$-dimensions with  respect to them are finite. Under such finiteness condition over $\CX$, the relative singularity category $\Delta_{\CX}(R)$ will become triangle equivalent to the Verdier quotient  $\frac{\K^{\rm{b}}(\CX)}{\K^{\rm{b}}(\rm{prj}\mbox{-}R)}$. This quotient category is studied independently, and   proved that it is triangle  equivalent to the homotopy category $\K^{-,\rm{p}}(\CX)$, see Proposition \ref{proposition 3.7}.
 Here $\K^{-,\rm{p}}(\CX)$ denotes the subcategory of the homotopy category $\K(\CX)$ of complexes over $\CX$ consisting of all  complexes which are homotopy-equivalent to a upper bounded exact  complex $\mathbf{P}$ over  $\CX$ such that for some $n,$ $\mathbf{P}^i \in  \rm{prj}\mbox{-}R$ for all $i\leqslant n.$ In fact, there are only finitely many terms of $\mathbf{P}$ to be non-projective. As a consequence we get the following theorem.
\begin{theorem}(Theorem \ref{Theorem 3.10})\label{Theorem 1.1} Let $\rm{prj}\mbox{-}R\subseteq \CX \subseteq \mmod R$ be a contravariantly finite subcategory. 	Assume the  global $\CX$-dimension of $R$ is finite. Then, there exists the following equivalence of  triangulated categories
	$$\mathbb{D}_{\rm{sg}}(R)\simeq \frac{\displaystyle \mathbb{K}^{-, \rm{p}}(\CX)}{ \displaystyle \mathbb{K}^{\rm{b}}_{\rm{ac}}(\CX)}.$$
\end{theorem}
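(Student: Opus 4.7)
The plan is to assemble three results from the preceding sections and then verify that the relevant thick subcategories match up. Theorem \ref{Theorem 3.1} exhibits $\D_{\rm{sg}}(R)$ as a Verdier quotient of $\Delta_{\CX}(R)$ by a distinguished thick subcategory, call it $\mathcal{N}$. The hypothesis that $R$ has finite global $\CX$-dimension means that every object of $\mmod \CX$ admits a finite resolution by representable functors, so the Yoneda embedding induces a triangle equivalence $\D^{\rm{b}}(\mmod \CX) \simeq \K^{\rm{b}}(\CX)$ sending the thick subcategory $\CP$ of Definition \ref{Deefiniteion 1} onto $\K^{\rm{b}}(\prj R)$. Thus
\[
\Delta_{\CX}(R) \;=\; \D^{\rm{b}}(\mmod \CX)/\CP \;\simeq\; \K^{\rm{b}}(\CX)/\K^{\rm{b}}(\prj R),
\]
and Proposition \ref{proposition 3.7} identifies the right-hand side with $\K^{-,\rm{p}}(\CX)$. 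Composing these three equivalences realises $\D_{\rm{sg}}(R)$ as the Verdier quotient of $\K^{-,\rm{p}}(\CX)$ by the image of $\mathcal{N}$, which I denote $\widetilde{\mathcal{N}}$.

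The proof now reduces to identifying $\widetilde{\mathcal{N}}$ with $\K^{\rm{b}}_{\rm{ac}}(\CX)$. The inclusion $\K^{\rm{b}}_{\rm{ac}}(\CX) \subseteq \widetilde{\mathcal{N}}$ is immediate: a bounded acyclic complex over $\CX$ is in particular acyclic in $\mmod R$, so its class in $\D^{\rm{b}}(\mmod R)$, and a fortiori in $\D_{\rm{sg}}(R)$, vanishes; tracing this through the chain of equivalences deposits any such complex in $\widetilde{\mathcal{N}}$.

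The reverse inclusion is the main technical point. Given $\mathbf{P} \in \K^{-,\rm{p}}(\CX)$ which becomes zero in $\D_{\rm{sg}}(R)$, one pulls $\mathbf{P}$ back along Proposition \ref{proposition 3.7} to some $\mathbf{X} \in \K^{\rm{b}}(\CX)$, and then invokes Theorem \ref{Theorem 3.1} to conclude that $\mathbf{X}$ is, modulo $\K^{\rm{b}}(\prj R)$, a bounded acyclic complex over $\CX$; applying the equivalence of Proposition \ref{proposition 3.7} once more translates this into $\mathbf{P} \in \K^{\rm{b}}_{\rm{ac}}(\CX)$ up to isomorphism in $\K^{-,\rm{p}}(\CX)$. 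The main obstacle is not the verification itself but the careful bookkeeping required to match the two thick subcategories cleanly across the chain of equivalences; this amounts to tracing the concrete functors provided by Theorem \ref{Theorem 3.1} and Proposition \ref{proposition 3.7} and applying Yoneda-type arguments, with the substantive content already supplied by those earlier results.
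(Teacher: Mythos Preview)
Your overall strategy coincides with the paper's: combine Theorem~\ref{Theorem 3.3} (equivalently Theorem~\ref{Theorem 3.1} plus Lemma~\ref{Lemma3.2}) with Proposition~\ref{proposition 3.7}, and then check that the thick subcategory carried over to $\K^{-,\rm{p}}(\CX)$ is exactly $\K^{\rm{b}}_{\rm{ac}}(\CX)$. The difference lies in how that last identification is made.

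The paper dispatches it in one line: the equivalence $\Psi$ of Proposition~\ref{proposition 3.7} is induced by the functor $\Phi$ (the right adjoint to the inclusion of acyclics), and for a bounded acyclic complex $\mathbf{Q}\in\K^{\rm{b}}_{\rm{ac}}(\CX)$ the $\K$-projective resolution in the defining triangle $P_{\mathbf{Q}}\to\mathbf{Q}\to\Phi(\mathbf{Q})\to P_{\mathbf{Q}}[1]$ is contractible, so $\Phi(\mathbf{Q})\simeq\mathbf{Q}$. Thus $\Psi$ restricted to $\K^{\rm{b}}_{\rm{ac}}(\CX)$ is isomorphic to the identity, and the commuting square
\[
\xymatrix{ \Delta_{\CX}(R)\ar[r]^{\Psi}_{\sim} & \K^{-,\rm{p}}(\CX) \\ \K^{\rm{b}}_{\rm{ac}}(\CX)\ar@{^{(}->}[u]\ar[r]^{\sim} & \K^{\rm{b}}_{\rm{ac}}(\CX)\ar@{^{(}->}[u] }
\]
immediately yields the desired equivalence of Verdier quotients. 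No two-way inclusion argument is needed.

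Your proposal misses this observation and instead attempts separate inclusions. The forward inclusion is fine, if circuitous. The reverse inclusion, however, has a gap: after pulling $\mathbf{P}$ back to some $\mathbf{X}\in\K^{\rm{b}}(\CX)$ that is acyclic modulo $\K^{\rm{b}}(\prj R)$, you write that ``applying the equivalence of Proposition~\ref{proposition 3.7} once more translates this into $\mathbf{P}\in\K^{\rm{b}}_{\rm{ac}}(\CX)$''. But that step is precisely the assertion that $\Psi$ sends bounded acyclics to bounded acyclics, which is what you are trying to establish. The argument becomes circular unless you supply the observation above; once you do, the entire bookkeeping paragraph is unnecessary.
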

As the above theorem says we obtain a description of the singularity category of $R$ such that only exact complexes are involved.\\
For right notherian rings $R$ and $R'$, we say that $R$ is {\it singularly equivalent} to $R'$ if there exists a triangle equivalence $\D_{\rm{sg}}(R)\simeq \D_{\rm{sg}}(R')$. Our next purpose is to apply the relative singularity categories to construct singular equivalences. Our attempts of using this approach lead to the following result.
\begin{theorem}(Theorem \ref{Coroallry 5.7})\label{Theorem 1.2}
	Let $\rm{prj}\mbox{-}R \subseteq \CX \subseteq \mmod R$ and $ \rm{prj}\mbox{-}R'\subseteq \CX' \subseteq \mmod R'$ be  contravariantly finite subcategories   and  to be closed under syzygies. Assume  the global $\CX$-dimension, resp. $\CX'$-dimension,  of $R$, resp. $R'$, is finite. Suppose, further,  there is a functor $F:\CX\rt \CX'$ such that $F(\rm{prj}\mbox{-}R) \subseteq \rm{prj}\mbox{-}R'$, the induced functor $\underline{F}:\underline{\CX}\rt \underline{\CX}'$ is an equivalence, and  for any $X \in \CX$, $F(\Omega_R(X))\simeq \Omega_{R'}(F(X))$ in $\underline{\CX}'$. If either of the following situations happens.
	\begin{itemize}
		\item [$(1)$]	
		The subcategories  $\CX$ and $\CX'$ satisfy the condition $(*)$ (see Definition \ref{Definition 4.1}). 		
		\item [$(2)$] There are quasi-resolving subcategories $\CX \subseteq \CY \subseteq \mmod R$ and $\CX' \subseteq \CY'\subseteq \mmod R'$  such that the functor $F$ is a restriction of an exact functor from  $\CY $ to $\CY'$.
	\end{itemize} 
	Then, $R$ and $R'$ are singularly equivalent.
\end{theorem}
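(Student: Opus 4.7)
The plan is to apply Theorem~\ref{Theorem 1.1} on both sides: under the finite global $\CX$-dimension and $\CX'$-dimension hypotheses, it identifies
$$\D_{\sg}(R)\simeq\frac{\K^{-,\rm{p}}(\CX)}{\K^{\bb}_{\ac}(\CX)}\quad\text{and}\quad\D_{\sg}(R')\simeq\frac{\K^{-,\rm{p}}(\CX')}{\K^{\bb}_{\ac}(\CX')},$$
so the task becomes to produce a triangle equivalence between the two right-hand Verdier quotients. Applying $F$ termwise yields an additive functor $\Phi\colon\K(\CX)\rt\K(\CX')$; since cones and shifts are formed termwise, $\Phi$ is automatically triangulated. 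Because $F(\rm{prj}\mbox{-}R)\subseteq\rm{prj}\mbox{-}R'$, it restricts to a triangulated functor $\K^{-,\rm{p}}(\CX)\rt\K^{-,\rm{p}}(\CX')$.

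The first nontrivial point is to verify that $\Phi$ sends $\K^{\bb}_{\ac}(\CX)$ into $\K^{\bb}_{\ac}(\CX')$. Under hypothesis~$(2)$, this is immediate: the extension of $F$ to an exact functor $\CY\rt\CY'$ preserves short exact sequences, hence bounded acyclic complexes. Under hypothesis~$(1)$, condition $(*)$ of Definition~\ref{Definition 4.1} is designed precisely to secure this preservation, presumably by allowing one to replace each bounded acyclic complex in $\CX$ by one on which the action of $F$ is visibly exact. In either case $\Phi$ descends to a triangulated functor $\bar\Phi$ between the two Verdier quotients.

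To show $\bar\Phi$ is an equivalence, one lifts the stable equivalence $\underline F$ to the level of complexes. For essential surjectivity, given $\Y\in\K^{-,\rm{p}}(\CX')$ with non-projective terms in a finite range, pick $\X^i\in\CX$ with $\underline F(\X^i)\simeq\Y^i$ (using essential surjectivity of $\underline F$) and lift each stable differential via fullness of $\underline F$. The composites $d^{i+1}d^i$ vanish in $\underline\CX$ by faithfulness of $\underline F$, hence represent morphisms factoring through projectives of $R$; the compatibility $F(\Omega_R X)\simeq\Omega_{R'}(F X)$, together with closure of $\CX$ under syzygies, then allows one to absorb this obstruction by enlarging the $\X^i$ with projective summands, via the standard trick that lifts a ``complex up to projectives'' to a genuine complex in $\CX$. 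The bounded projective tail of $\Y$ is recovered by prepending a projective resolution of the appropriate syzygy. Fullness and faithfulness of $\bar\Phi$ follow by applying the same lifting argument to morphisms and null-homotopies, combined with the full faithfulness of $\underline F$ and the roof description of morphisms in the Verdier quotient.

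The main obstacle throughout is the second step: the whole construction breaks if $\Phi$ fails to preserve bounded exactness, and condition $(*)$ or the existence of exact extensions to a quasi-resolving enlargement is precisely the device that ensures this. Once that is settled, the remainder is a familiar translation of a stable equivalence commuting with syzygies into a triangulated equivalence at the complex level, and combining with Theorem~\ref{Theorem 1.1} on the two sides produces the desired singular equivalence $\D_{\sg}(R)\simeq\D_{\sg}(R')$.
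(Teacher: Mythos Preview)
Your treatment of case $(2)$ is essentially the paper's: the exact extension lets you apply $F$ termwise, and the resulting functor visibly preserves $\K^{\bb}_{\ac}(\CX)$, so one passes to the Verdier quotients and invokes Theorem~\ref{Theorem 1.1}. The paper's equivalence argument for the induced functor is cleaner than your lifting sketch, though: rather than lifting arbitrary complexes term by term, it observes that the complexes $\mathbf{C}_X[i]$ (projective resolutions augmented by $X$) generate $\K^{-,\rm{p}}(\CX)$, and then checks full faithfulness on these generators via the identifications $\Hom_{\K}(\mathbf{C}_A[i],\mathbf{C}_B)\simeq\underline{\Hom}_R(A,\Omega_R^i(B))$, which reduce everything to the stable equivalence $\underline F$ and the syzygy compatibility.

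Your treatment of case $(1)$, however, has a genuine gap. You define $\Phi$ by applying $F$ termwise and assert it restricts to $\K^{-,\rm{p}}(\CX)\to\K^{-,\rm{p}}(\CX')$; but objects of $\K^{-,\rm{p}}(\CX)$ are \emph{exact} complexes, and without exactness of $F$ there is no reason termwise $F$ preserves exactness, so $\Phi$ need not land in $\K^{-,\rm{p}}(\CX')$ at all. More importantly, you have misread condition~$(*)$: it is not a device for making $\Phi$ preserve bounded acyclics. Its role in the paper (Proposition~\ref{Prop 4.3}) is to give an \emph{intrinsic} characterization $\K^{\bb}_{\ac}(\CX)=\K^{-,\rm{p}}(\CX)_r$, the subcategory of right homologically finite objects. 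The paper then builds the equivalence between the $\K^{-,\rm{p}}$ categories differently in case $(1)$: termwise $F$ gives a functor $\K^{\bb}(\CX)\to\K^{\bb}(\CX')$ (no exactness needed for bounded complexes), this descends to the quotients by $\K^{\bb}(\prj R)$, and conjugating by the equivalences $\Psi$ of Proposition~\ref{proposition 3.7} produces $\K^{\rm{p}}F\colon\K^{-,\rm{p}}(\CX)\to\K^{-,\rm{p}}(\CX')$. Once this is shown to be a triangle equivalence (again via the generators $\mathbf{C}_X$), the intrinsic description forces it to restrict to $\K^{\bb}_{\ac}(\CX)\simeq\K^{\bb}_{\ac}(\CX')$ automatically, and Theorem~\ref{Theorem 1.1} finishes. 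So in case $(1)$ the functor is not termwise $F$ on exact complexes, and condition~$(*)$ enters only after the equivalence is built, not as a tool to construct it.
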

 A subcategory of an abelian category is said to be {\it quasi-resolving} if it contains all projective objects, closed under direct summands, closed under kernels of epimorphisms. An exact functor from  $\CY$ to $\CY'$ means that any short exact sequence in $\mmod R$ with all terms in $\CY$ is mapped by the functor  into a short exact sequence in $\mmod R'$.\\  
By the above theorem we observer some sort of relative stable equivalence implies the singular equivalences. It is sometime easier to use the stable categories (which have more simple structure) to determine two rings to be singular equivalent. An important examples of the subcategories in the theorem is when both are the whole of the module categories. In this special case we are dealing with the stable equivalences which are fundamental equivalences both in the representation theory of algebras and groups
and in the theory of triangulated categories. We refer to Corollary \ref{Coroallry 5.7} and Remark \ref{lastremark} for a discussion about this special case.  Lifting stable equivalences to derived equivalences, and so singular equivalences, are also considered in \cite{As, D}, in the setting of self-injective algebras, and for more general algebras in \cite{HX}. We will provide some more examples of the subcategories appeared in the above theorems (not necessarily to be the whole of the module categories) to support our main results. Our examples are given in different types of rings, including path rings, triangular matrix rings, trivial extension rings and tensor rings. \\
 The paper is organized as follows. In Section $2$,  we  first collect some facts about our functorial approach. Then we  introduce  the notion of relative singularity categories  and whose connection with the usual singularity categories. In Section $3$, Theorem \ref{Theorem 1.1} is proved, and moreover, the Hom-finiteness of the relative singularity categories over Artin algebras is discussed. Section $4$ is devoted to prove Theorem \ref{Theorem 1.2} and based on this theorem to show that how one can lift a relative stable equivalences to a singular equivalence. In the last section we will  give some examples and application of our results.\\
{\bf Notation and  	Convention.} Throughout the paper $R$ denotes a right notherian ring and with $\Mod R$, resp. $\mmod R$, the category of, resp. finitely generated,  modules over $R$. By a module we always mean a right module unless otherwise stated.  The subcategory of finitely generated projective modules over $R$ is denoted by $\rm{prj}\mbox{-}R$. Let $\CA$ be an additive category. We denote by $\mathbb{K}(A)$ the homotopy category of all complexes over $\CA$. Moreover,  $\mathbb{K}^{\rm{b}}(\CA)$ denotes the full subcategory of $\mathbb{K}(\CA)$ consisting of all bounded above, resp. bounded, complexes. In case that $\CA$ is abelian, the derived category of $\CA$ will be denoted by $\mathbb{D}(\CA)$, which is the  Verdier quotient  $\mathbb{K}(\CA)/\mathbb{K}_{\rm{ac}}(\CA)$. Here $\mathbb{K}_{\rm{ac}}(\CA)$ is the  homotopy category of all exact complexes over $\CA$. By   $\mathbb{D}^{\rm{b}}(\CA)$, we denote the full subcategory of $\mathbb{D}(\CA)$ consisting of all  homologically bounded complexes. All subcategories are assumed to be {\it  full,  closed under isomorphisms, direct summands and finite sums}.  Hom-spaces in the homotopy category with ``$\Hom_{\mathbb{K}}(-, -)$'' is usually shown. For an $R$-module $M$ in $\mmod R$, consider a short exact sequence $0 \rt \Omega_R(M) \rt P\rt M \rt 0$ with $P$ in $\rm{prj}\mbox{-}R$.  The module $\Omega_R(M)$ is then called a syzygy module of $M$. Note that syzygy modules
of M are not uniquely determined. An $n$-th syzygy of M will be denoted by $\Omega^n_R(M)$, for $n\geqslant 2.$ For a complex  $\mathbf{X}$ in $\K(\CX)$, we have in mind the differentials raise the degrees. If $i>0$, resp. $i<0,$ we mean by $\mathbf{X}[i]$ the shift of the complex  $\mathbf{X}$ to the left, resp, right,  $i$, resp. $-i$, degrees. A module is considered as a complex concentrated at degree zero when we want to assume it as an object in the category of complexes. Let $\mathcal{C}$ be a subcategory of an abelian category $\CA$. Given an object $M$ in $\CA$,
a right $\CX$-approximation of $M$ is a map $g:C \rt M$ with $C \in \mathcal{C}$
such that for any map $h:C'\rt M$ with $C'\in \mathcal{C}$, there is a map $ f:C'\rt C$
 such that $h=g \circ f.$ In case every object in $\CA$ has a right
$\mathcal{C}$-approximation, $\mathcal{C}$ is said to be {\it  contravariantly finite} in $\CA$. Let $\rm{prj}\mbox{-}R \subseteq \CX$ be a subcategory of $\mmod R$. The stable category of $\CX$ is denoted by $\underline{\CX}$. The stable  category is defined by setting the objects to be the same as  those of $\CX$, and for any $X$ and $Y$ in $\CX$, the group of  morphisms is given by  $\underline{\rm{Hom}}_R(X,Y)=\Hom_R(X, Y)/\rm{P}\Hom_R(X, Y)$, where $\rm{P}\Hom_R(X, Y)$ denotes the set of all morphisms $A \rt B$ which factor through a projective module. 
\section{Relative Singularity Categories}
Throughout  this section unless stated otherwise, let $\CX$ be a  contravariantly finite subcategory of $\mmod R$ containing $\rm{prj}\mbox{-}R.$ As a general point in our paper, for avoiding any confusion, at least in our main results we often restated the needed assumptions. In this section, we will introduce the notion of relative singularity category $\Delta_{R}(\CX)$ with respect to the subcategory $\CX$ as a Verdier Localization of $\D^{\rm{b}}(\mmod \CX)$. Then, we intend  to make a realization of $\D_{\rm{sg}}(R)$ by a Verdier localization of $\Delta_{R}(\CX)$. To do this, we need first to recall some functorial construction given in \cite{HK}.\\
Let $F$ be in $\mmod \CX$ and $\Hom_R(-, X_1)\mid_{\CX}\st{\Hom_R(-, d)\mid_{\CX}}\lrt \Hom_{R}(-, X_0)\mid_{\CX}\rt F \rt 0$ a projective presentation of $F$. The assumption of $\CX$ being contravariantly finite implies that any morphism in $\CX$ has a weak kernel, consequently $\mmod \CX$ is an abelian category. The functor $\vartheta: \mmod \CX\rt \mmod R$ is defined by sending $F$ to the cokernel $\rm{Cok} \ d$ of $d$ in $\mmod R.$ The assignment $\vartheta$ on morphisms is defined with help of the lifting property, see \cite[Remark 2.1]{HK} for more details. The functor $\vartheta$ is an exact functor. Therefore, it induces a triangle functor $\D^{\rm{b}}_{\vartheta}$ from $\D^{\rm{b}}(\mmod \CX)$ to $ \D^{b}(\mmod R)$. It acts on objects, as well as roofs, terms by terms. Let $\D^{\rm{b}}_0(\mmod \CX)$ denote the kernel of $\D^{\rm{b}}_{\vartheta}$, by \cite[Proposition 3.1]{HK}, it consists of all complexes $\mathbf{K}$ such that the associated valuated complexes $\mathbf{K}(P)$ on any projective module $P$ in $\rm{prj}\mbox{-}R$ are an exact complex of abelian groups. The induced functor from the Verdier localization $\frac{\D^{\rm{b}}(\mmod \CX)}{\D^{\rm{b}}_0(\mmod\CX)}$ to $\D^{\rm{b}}(\mmod R)$ will be denoted by $\widetilde{\D^{b}_{\vartheta}}$. In \cite[Proposition 3.3]{HK}, it is proved that $\widetilde{\D^{b}_{\vartheta}}$ is an 
 equivalence of triangulated categories. By $\CP$ we will show the thick subcategory of $\D^{\rm{b}}(\mmod \CX)$ generated by all representable functors $\Hom_R(-, P)\mid_{\CX}$ in which $P$ runs through modules in $\rm{prj}\mbox{-}R \subseteq \CX.$ As for any $X \in \CP$ and $Y \in \D_0^{\rm{b}}(\mmod \CX)$, $\Hom_{\D^{\rm{b}}(\mmod \CX)}(X, Y)=0$, then $\CP$ can be considered as a thick subcategory of $\frac{\D^{\rm{b}}(\mmod \CX)}{\D^{\rm{b}}_0(\mmod\CX)}$. Moreover, by the same reason, the triangulated category $\D^{\rm{b}}_0(\mmod\CX)$ can be identified as a subcategory of $\D^{\rm{b}}(\mmod \CX)/\CP$. For the latter embedding we need to  define morphisms in the quotient category  with right roofs.  
 \begin{definition}\label{Deefiniteion 1}
 	The {\it relative singularity category} with  respect to a contravariantly finite subcategory $\rm{prj}\mbox{-} R \subseteq \CX \subseteq \mmod R$  is the Verdier quotient category
 	$$\Delta_{\CX}(R):= \frac{\D^{\rm{b}}(\mmod \CX)}{\CP}.$$	
 \end{definition}
When $\CX=\rm{prj}\mbox{-}R$, then $\Delta_{\rm{prj}\mbox{-}R}(R)$ is not nothing else that than the usual singularity category $\D_{\rm{sg}}(R)$. For the  special case, $\CX=\rm{Gprj}\mbox{-}R$, the resulting singularity category is called the {\it Gorenstein singularity category}, assuming $\rm{Gprj}\mbox{-}R$ is contravariantly finite in $\mmod R$. For instance, over {\it virtually Gorenstein Artin algebras} the subcategory of Gorenstien projective modules is always contravariantly finite. This  class of algebras which has been introduced in \cite{BR}.\\ 
In the next result  a connection between the classic singularity category and the relative one is stated.
 \begin{theorem}\label{Theorem 3.1}
 	Let $\CX$ be a  contravariantly finite subcategory of $\mmod R$ containing $\rm{prj}\mbox{-}R.$ Then,  we have the following equivalences of triangulated categories
 	$$ \D_{\rm{sg}}(R)\simeq\frac{\D^{\rm{b}}(\mmod \CX)/\D^{\rm{b}}_0(\mmod\CX)}{\CP}\simeq \frac{\D^{\rm{b}}(\mmod \CX)}{\rm{Thick}(\D^{\rm{b}}_0(\mmod\CX)\cup \CP)}\simeq \frac{\D^{\rm{b}}(\mmod \CX)/\CP}{\D^{\rm{b}}_0(\mmod\CX)}.$$
 	In particular, $\D_{\rm{sg}}(R)\simeq \frac{\Delta_{\CX}(R)}{\D^{\rm{b}}(\mmod \CX)}.$
 \end{theorem}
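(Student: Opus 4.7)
The plan is to ride the equivalence
$$\widetilde{\D^{b}_{\vartheta}}\colon \D^{\rm{b}}(\mmod\CX)/\D^{\rm{b}}_0(\mmod\CX)\xrightarrow{\;\sim\;}\D^{\rm{b}}(\mmod R)$$
of \cite[Proposition 3.3]{HK} across the defining Verdier quotient of $\D_{\rm{sg}}(R)$, and then rearrange the two nested localizations. The essential step is to identify what $\CP$ corresponds to on the right-hand side. For every $P\in \rm{prj}\mbox{-}R\subseteq\CX$ the functor $\Hom_R(-,P)\mid_{\CX}$ is a projective object of $\mmod\CX$, so a projective presentation can be taken to be $0\to \Hom_R(-,P)\mid_{\CX}\to \Hom_R(-,P)\mid_{\CX}\to 0$, and by the definition of $\vartheta$ we get $\vartheta(\Hom_R(-,P)\mid_{\CX})=P$. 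Since $\D^{\rm{b}}_{\vartheta}$ acts termwise and is exact, it sends the generators of the thick subcategory $\CP$ precisely to the generators of $\K^{\rm{b}}(\rm{prj}\mbox{-}R)$; because equivalences preserve thick closures, this upgrades to $\widetilde{\D^{b}_{\vartheta}}(\CP)=\K^{\rm{b}}(\rm{prj}\mbox{-}R)$. Taking the induced equivalence of Verdier quotients then yields
$$\frac{\D^{\rm{b}}(\mmod\CX)/\D^{\rm{b}}_0(\mmod\CX)}{\CP}\;\simeq\;\frac{\D^{\rm{b}}(\mmod R)}{\K^{\rm{b}}(\rm{prj}\mbox{-}R)}\;=\;\D_{\rm{sg}}(R),$$
which is the first of the three isomorphisms.

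The remaining two equivalences are formal consequences of a standard Verdier-quotient fact: for thick subcategories $\CS_1,\CS_2$ of a triangulated category $\mathcal{T}$, if the canonical functor $\CS_2\to \mathcal{T}/\CS_1$ is fully faithful with thick essential image, then $(\mathcal{T}/\CS_1)/\CS_2\simeq \mathcal{T}/\rm{Thick}(\CS_1\cup\CS_2)$. I would apply this with $(\CS_1,\CS_2)=(\D^{\rm{b}}_0(\mmod\CX),\CP)$ to obtain the second isomorphism, and with the roles swapped to obtain the third. Both applications hinge on the orthogonality $\Hom_{\D^{\rm{b}}(\mmod\CX)}(\CP,\D^{\rm{b}}_0(\mmod\CX))=0$ noted in the paragraph preceding Definition \ref{Deefiniteion 1}, which guarantees that $\CP$ embeds fully faithfully into $\D^{\rm{b}}(\mmod\CX)/\D^{\rm{b}}_0(\mmod\CX)$ via left roofs and, symmetrically, that $\D^{\rm{b}}_0(\mmod\CX)$ embeds fully faithfully into $\Delta_{\CX}(R)=\D^{\rm{b}}(\mmod\CX)/\CP$ via right roofs (as already recorded in the excerpt). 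The concluding ``in particular'' statement is then just the third isomorphism rewritten using the definition of $\Delta_{\CX}(R)$ (with $\D^{\rm{b}}_0(\mmod\CX)$ intended in the denominator).

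I do not anticipate a substantive obstacle: the only non-formal ingredient is the termwise computation $\vartheta(\Hom_R(-,P)\mid_{\CX})=P$, after which everything is bookkeeping of Verdier quotients anchored on the orthogonality from \cite{HK}. The mildly delicate point is verifying at each step that one is genuinely quotienting a triangulated category by a thick subcategory of itself; this is exactly what the two symmetric fully faithful embeddings above supply.
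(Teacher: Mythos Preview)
Your argument is correct and follows essentially the same route as the paper: the first equivalence is obtained from the computation $\vartheta(\Hom_R(-,P)\mid_{\CX})=P$ together with the equivalence $\widetilde{\D^{b}_{\vartheta}}$, and the remaining two are deduced from standard iterated Verdier-quotient facts (the paper phrases this as ``the universal property of the triangulated quotient categories'' and cites \cite[Proposition~2.3]{O}). Your explicit invocation of the orthogonality $\Hom_{\D^{\rm{b}}(\mmod\CX)}(\CP,\D^{\rm{b}}_0(\mmod\CX))=0$ to justify the two fully faithful embeddings is exactly the ingredient needed, and your observation about the typo in the ``in particular'' line is well taken.
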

\begin{proof}
	For the first equivalence, note that by the definition one can see that $\vartheta(\Hom_R(-, P)\mid_{\CX})=P$ for any $P \in \rm{prj}\mbox{-}R,$ but this fact implies that the functor $\widetilde{\D^{b}_{\vartheta}}$ can be restricted to the subcategories $\CP$ and $\K^{\rm{b}}(\rm{prj}\mbox{-}R)$. Hence, $\widetilde{\D^{b}_{\vartheta}}$ induces a triangle equivalence between the Verdier localization categories $\frac{\D^{\rm{b}}(\mmod \CX)/\D^{\rm{b}}_0(\mmod\CX)}{\CP}$  and $\D^{\rm{b}}(\mmod R)/ \K^{\rm{b}}(\rm{prj}\mbox{-}R)$, as desired. The last two equivalences follow
	from the universal property of the triangulated quotient categories,  see also \cite[Proposition 2.3]{O}. 
\end{proof}

 If the ring $R$ is clear from the context, we often use $\Delta(\CX)$ instead of $\Delta_{\CX}(R)$, e.g., $\Delta(\rm{Gprj}\mbox{-}R)$ instead of the long notation  $\Delta_{(\rm{Gprj}\mbox{-}R)}(R).$ Hence, by our theorem, over a virtually Gorenstein Artin algebra $\La,$ we can describe $\D_{\rm{sg}}(\La)$ as the Verdier quotient $\frac{\Delta(\rm{Gprj}\mbox{-}\La)}{\D^{\rm{b}}_0(\mmod (\rm{Gprj}\mbox{-}\La))}$.

By putting some finiteness conditions on $\CX$ as in the following,  we will obtain more interesting  description of $\D_{\rm{sg}}(R)$.
For a module $M$ in $\mmod R,$ we say that the {\it  $\CX$-dimension} of $M$ is finite, if there an exact sequence $0 \rt X_n\rt \cdots\rt X_1\rt X_0\rt M \rt 0$ in $\mmod R$ with all $X_i \in \CX$ and it remains exact by applying $\Hom_{R}(X, -)$ for each $X \in \CX.$ We say that the {\it global $\CX$-dimension of $R$} is finite if any module  in $\mmod R$ has of finite $\CX$-dimension.
\begin{lemma}\label{Lemma3.2}
	Assume that  the global  $\CX$-dimension of $R$ is finite.
	 Then, the Yoneda functor induces the following triangle equivalence
	$$\D^{\rm{b}}(\mmod \CX)\simeq \K^{\rm{b}}(\CX).$$	
\end{lemma}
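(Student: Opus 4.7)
The plan is to construct the Yoneda functor $Y\colon \K^{\rm{b}}(\CX)\to \D^{\rm{b}}(\mmod\CX)$ termwise, by $X\mapsto \Hom_R(-,X)\mid_{\CX}$, and to verify that $Y$ is a triangle equivalence by checking full faithfulness and essential surjectivity separately. Fully faithfulness will follow from two standard facts that hold without any finiteness assumption; the finiteness of the global $\CX$-dimension will only be invoked to secure essential surjectivity.

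For fully faithfulness, I would argue as follows. First, Yoneda's lemma gives, at the level of objects of $\CX$, a natural isomorphism $\Hom_R(X,X')\cong \Hom_{\mmod\CX}(YX,YX')$, and this promotes termwise to an isomorphism $\Hom_{\K^{\rm{b}}(\CX)}(\mathbf{X},\mathbf{X}')\cong \Hom_{\K^{\rm{b}}(\mmod\CX)}(Y\mathbf{X},Y\mathbf{X}')$, since chain maps and homotopies correspond. Second, every representable $YX$ is a projective object of $\mmod\CX$, so $Y\mathbf{X}$ is a bounded complex of projectives, hence K-projective; consequently $\Hom_{\D^{\rm{b}}(\mmod\CX)}(Y\mathbf{X},Y\mathbf{X}')=\Hom_{\K^{\rm{b}}(\mmod\CX)}(Y\mathbf{X},Y\mathbf{X}')$. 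Composing these two identifications yields full faithfulness of $Y$.

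For essential surjectivity, since $\CX$ is closed under direct summands, the projectives of $\mmod\CX$ are precisely the representables $\Hom_R(-,X)\mid_{\CX}$ with $X\in \CX$. It therefore suffices to prove that every $F\in \mmod\CX$ has finite projective dimension in $\mmod\CX$, for then an arbitrary object of $\D^{\rm{b}}(\mmod\CX)$ admits a finite projective resolution, which lies in $Y(\K^{\rm{b}}(\CX))$. Given $F$, take a projective presentation $\Hom_R(-,X_1)\mid_{\CX}\xrightarrow{\Hom_R(-,d)\mid_{\CX}} \Hom_R(-,X_0)\mid_{\CX}\to F\to 0$ coming from a morphism $d\colon X_1\to X_0$ in $\CX$, set $K=\Ker(d)\in\mmod R$, and identify the kernel of $\Hom_R(-,d)\mid_{\CX}$ in the presheaf category with $\Hom_R(-,K)\mid_{\CX}$. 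By hypothesis $K$ has finite $\CX$-dimension, so there is an exact sequence $0\to X_n\to\cdots\to X_0'\to K\to 0$ in $\mmod R$ that remains exact after $\Hom_R(X,-)$ for every $X\in \CX$; translating this objectwise exactness into exactness of functors, one obtains a projective resolution $0\to \Hom_R(-,X_n)\mid_{\CX}\to\cdots\to \Hom_R(-,X_0')\mid_{\CX}\to \Hom_R(-,K)\mid_{\CX}\to 0$ in $\mmod\CX$. Splicing this resolution with the original presentation of $F$ produces a finite projective resolution of $F$ in $\mmod \CX$, bounding its projective dimension in terms of the $\CX$-dimension of $K$.

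The main obstacle is precisely the identification of $\Ker(\Hom_R(-,d)\mid_{\CX})$ with the representable-like functor $\Hom_R(-,K)\mid_{\CX}$ and the passage from an $\CX$-resolution of $K$ (exact in $\mmod R$ and after $\Hom_R(X,-)$ for $X\in \CX$) to a genuine projective resolution of that functor in $\mmod\CX$; once this is verified, the splicing argument is routine and delivers $\gldim(\mmod\CX)<\infty$. Putting everything together, $Y\colon \K^{\rm{b}}(\CX)\to \D^{\rm{b}}(\mmod\CX)$ is a triangulated equivalence, as desired.
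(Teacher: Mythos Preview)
Your proposal is correct and follows essentially the same approach as the paper. The core argument---showing that every $F\in\mmod\CX$ has finite projective dimension by taking a projective presentation, identifying its kernel with $\Hom_R(-,\Ker d)\mid_{\CX}$, resolving $\Ker d$ by a finite $\CX$-resolution that stays exact under $\Hom_R(X,-)$, applying Yoneda, and splicing---is exactly what the paper does; you simply spell out the full faithfulness step explicitly (via Yoneda plus K-projectivity of bounded complexes of projectives), whereas the paper invokes the standard equivalence $\D^{\rm b}(\mmod\CX)\simeq\K^{\rm b}(\prj\CX)$ for categories of finite global dimension and then identifies $\prj\CX$ with $\CX$ via Yoneda.
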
 
\begin{proof}
Take $F$ in $\mmod \CX$ with a projective presentation $$\Hom_{R}(-, X_1)\mid_{\CX}\st{\Hom_{R}(-, d)\mid_{\CX}}\lrt\Hom_{R}(-, X_0)\mid_{\CX}\rt F\rt 0.$$ By our assumption, there is an exact sequence 
$$0 \rt X_n\rt \cdots\rt X_2\rt \rm{Ker} \ d \rt 0$$	
with $X_i$ belong to $\CX$ and remains exact by applying $\Hom_R(X, -)$ for any $X \in \CX.$ By applying the Yoneda functor on the above exact sequence and then gluing the obtained exact sequence in the functor category  with  $0 \rt\Hom_{R}(-, \rm{Ker} \ d)\mid_{\CX}\rt \Hom_{R}(-, X_1)\mid_{\CX}\rt \Hom_{R}(-, X_0)\mid_{\CX}\rt F\rt0$, obtained from the projective presentation, we observe the projective dimension of $F$ in $\mmod \CX$ is finite. On the other hand, it is known that in this case the existence of  the triangle equivalence $\D^{\rm{b}}(\mmod \CX)\simeq \K^{\rm{b}}(\rm{prj}\mbox{-}\CX)$, where $\rm{prj}\mbox{-}\CX$ denotes the category of projective functors in $\mmod \CX$. By our convention since $\CX$ is closed under direct summands then $\rm{prj}\mbox{-}\CX$ consists of all representable functors $\Hom_{R}(-, X)\mid_{\CX}$, where $X \in \CX$. Using this fact and the Yoneda functor turns out the triangle equivalence $\K^{\rm{b}}(\rm{prj}\mbox{-}\CX)\simeq \K^{\rm{b}}(\CX)$. 
So we are done.	
\end{proof}
The equivalence in the above lemma induces the triangle equivalence $\Delta_{\CX}(R)\simeq \frac{\K^{\rm{b}}(\CX)}{\K^{\rm{b}}(\rm{prj}\mbox{-}R)},$ see the proof of the next theorem for more explanations.
 \begin{theorem}\label{Theorem 3.3}  Let $\CX$ be a  contravariantly finite subcategory of $\mmod R$ containing $\rm{prj}\mbox{-}R.$
 	Assume that  the global $\CX$-dimension of $R$ is finite. Then, we have the following equivalences of triangulated categories
 	$$ \D_{\rm{sg}}(R)\simeq\frac{\K^{\rm{b}}( \CX)/\K^{\rm{b
 		}}_{\rm{ac}}(\CX)}{\K^{\rm{b}}(\rm{prj}\mbox{-}R)}\simeq \frac{\K^{\rm{b}}( \CX)}{\rm{Thick}(\K^{\rm{b}}_{\rm{ac}}(\CX)\cup \K^{\rm{b}}(\rm{prj}\mbox{-}R))}\simeq \frac{\K^{\rm{b}}(\CX)/\K^{\rm{b}}(\rm{prj}\mbox{-}R)}{\K^{\rm{b}}_{\rm{ac}}(\CX)}.$$
 \end{theorem}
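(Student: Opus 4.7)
The plan is to transport the three equivalences established in Theorem \ref{Theorem 3.1} along the Yoneda equivalence $Y : \K^{\rm{b}}(\CX) \to \D^{\rm{b}}(\mmod \CX)$ produced by Lemma \ref{Lemma3.2}, where $Y$ sends a bounded complex $\mathbf{X}$ of objects of $\CX$ to the termwise complex $\Hom_R(-, \mathbf{X})|_{\CX}$ of representable functors. The argument will thus reduce to matching, under $Y$, the two thick subcategories $\CP$ and $\D^{\rm{b}}_0(\mmod \CX)$ of $\D^{\rm{b}}(\mmod \CX)$ with the thick subcategories $\K^{\rm{b}}(\rm{prj}\mbox{-}R)$ and $\K^{\rm{b}}_{\rm{ac}}(\CX)$ of $\K^{\rm{b}}(\CX)$, respectively.

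The first identification has essentially been pointed out in the remark following Lemma \ref{Lemma3.2}: by definition, $\CP$ is the thick closure in $\D^{\rm{b}}(\mmod \CX)$ of the representables $\Hom_R(-, P)|_{\CX}$ with $P \in \rm{prj}\mbox{-}R$, and under $Y^{-1}$ these are the stalk complexes $P$ in degree zero; since $\rm{prj}\mbox{-}R \subseteq \CX$ is closed under summands, the thick closure of these stalks in $\K^{\rm{b}}(\CX)$ is exactly $\K^{\rm{b}}(\rm{prj}\mbox{-}R)$. For the second identification I would appeal to the description recalled from \cite[Proposition 3.1]{HK}: $\D^{\rm{b}}_0(\mmod \CX)$ consists of those $\mathbf{K}$ for which the evaluation $\mathbf{K}(P)$ is acyclic for every $P \in \rm{prj}\mbox{-}R$. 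For $\mathbf{X} \in \K^{\rm{b}}(\CX)$, the evaluation $Y(\mathbf{X})(P)$ is $\Hom_R(P, \mathbf{X})$, and since $\Hom_R(P, -)$ preserves acyclicity whenever $P$ is projective, $Y$ sends $\K^{\rm{b}}_{\rm{ac}}(\CX)$ into $\D^{\rm{b}}_0(\mmod \CX)$; taking $P = R$ gives the converse. Combined with essential surjectivity of $Y$ on $\D^{\rm{b}}(\mmod \CX)$, this yields $Y^{-1}(\D^{\rm{b}}_0(\mmod \CX)) = \K^{\rm{b}}_{\rm{ac}}(\CX)$.

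With both correspondences in hand, the first equivalence in the statement is the image under $Y$ of the first equivalence of Theorem \ref{Theorem 3.1}, and the remaining two then follow formally from the universal property of Verdier localization applied to $\K^{\rm{b}}(\CX)/\K^{\rm{b}}_{\rm{ac}}(\CX) \simeq \D^{\rm{b}}(\mmod \CX)/\D^{\rm{b}}_0(\mmod \CX)$, in exactly the same manner as in the proof of Theorem \ref{Theorem 3.1}. The main obstacle I anticipate is the second identification: the forward direction (that $Y$ carries acyclic complexes into $\D^{\rm{b}}_0$) is routine, but one must also verify that every object of $\D^{\rm{b}}_0(\mmod \CX)$ actually arises, up to isomorphism, from an acyclic complex over $\CX$. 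This is guaranteed precisely by the essential surjectivity coming from finite global $\CX$-dimension, so the whole argument relies crucially on the hypothesis of Lemma \ref{Lemma3.2} being in force.
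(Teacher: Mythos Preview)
Your proposal is correct and follows essentially the same route as the paper: transport the equivalences of Theorem~\ref{Theorem 3.1} along the Yoneda equivalence of Lemma~\ref{Lemma3.2}, after checking that this equivalence matches $\CP$ with $\K^{\rm{b}}(\rm{prj}\mbox{-}R)$ and $\D^{\rm{b}}_0(\mmod\CX)$ with $\K^{\rm{b}}_{\rm{ac}}(\CX)$. The only cosmetic difference is that the paper argues via the inverse of $Y$ (taking a $\K$-projective resolution $\mathbf{P}\to\mathbf{X}$ and evaluating the associated triangle at $R$ to see that the resolution of an object in $\D^{\rm{b}}_0$ comes from an acyclic complex over $\CX$), whereas you work directly with $Y$ and the identity $Y(\mathbf{X})(R)\simeq\mathbf{X}$; your formulation is, if anything, slightly cleaner.
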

   \begin{proof}
   	Let us first give some  more information for the equivalence $\D^{\rm{b}}(\mmod \CX)\simeq \K^{\rm{b}}(\CX)$ given in Lemma \ref{Lemma3.2} which is helpful to obtain the equivalences in the statement. The equivalence on objects acts as follows. Let $\mathbf{X}$ be a complex in $\D^{\rm{b}}(\mmod \CX)$ and   $\Theta:\mathbf{P}\rt \mathbf{X}$ a $\mathbb{K}$-projective resolution of $\mathbf{X}$. By our assumption the $\mathbb{K}$-projective resolution $\mathbf{P}$ has to be a bounded complex of projective functors. Because of the Yoneda lemma, $\mathbf{P}$ can be presented as a complex obtaining by applying the Yoneda functor on some bounded complex of modules in $\CX$, say $\mathbf{Q}$. In fact, under the equivalence $\mathbf{X}$ is mapped into  a complex in $\K^{\rm{b}}(\CX)$ homotopy equivalent to $\mathbf{Q}$. Therefore, by the construction of the equivalence as already explained, one can see the equivalence can be restricted to $\D^{\rm{b}}_0(\mmod \CX)\simeq \K^{\rm{b}}_{\rm{ac}}(\CX)$ and $\CP\simeq \K^{\rm{b}}(\rm{prj}\mbox{-}\La)$. For the first  restricted equivalence, assume $\mathbf{X}$ belongs to $\D^{\rm{b}}_0(\mmod \CX)$,   we consider the triangle $\mathbf{P}\rt \mathbf{X}\rt \rm{cone} (\Theta)\rt \mathbf{P}[1]$, as described in the above. The valuation of the triangle on the regular module $R \in \rm{prj}\mbox{-}R$  gives us the triangle $\mathbf{P}(R)\rt \mathbf{X}(R)\rt \rm{cone} (\Theta)(R)\rt \mathbf{P}(R)[1]$ in the bounded derived category of abelian groups. By the valuated triangle we observe that $\mathbf{P}(R)$ is an exact complex of abelian groups. The isomorphism $\mathbf{P}(R)=\Hom_{R}(R, \mathbf{Q})\simeq \mathbf{Q}$, as complexes of $R$-modules, yields $\mathbf{Q}$ is an exact complex, so $\mathbf{Q}$ is in $\K^{\rm{b}}_{\rm{ac}}(\CX)$, as desired. Now applying Theorem \ref{Theorem 3.1} 
   	and using the above-mentioned observations we can complete the proof.
   \end{proof}
Fortunately, over an arbitrary Artin algebra $\La$ there is always a subcategory $\CX \subseteq \mmod \La$  which satisfies the assumption we need in  Theorem \ref{Theorem 3.3}. For instance, Auslander in  his Queen Mary College lecture \cite{A} showed the global dimension of $\rm{End}(\oplus_{i=0}^n\La/J^i)$, where $J$ is radical  and $n$ lowey length of $\La$, respectively, is finite. Consequently, $\CX=\rm{add}\mbox{-}\oplus_{i=0}^n\La/J^i$ holds the needed conditions of Theorem \ref{Theorem 3.3}. Therefore, we can always estimate the singularity categories of  Artin algebras via an Artin  algebra of finite global dimension. 
 
Recall that a two-sided notherian ring $R$ is {\it Gorenstein} provided that $\rm{id}_R R<\infty$ and $\rm{id}_{R^{\rm{op}}}R <\infty.$ Let $M$ be in $\mmod R$ and $M^*=\Hom_R(M, R)$.
Recall that $M$ is {\it Gorenstein projective} provided that there is an exact complex $\mathbf{P}$ of finitely generated projective $R$-modules such that the Hom-complex $\mathbf{P}^*=\Hom_R(\mathbf{P}, R)$ is still exact and $M$ is isomorphic to a certain cocycle $Z^i(\mathbf{P})$ of $\mathbf{P}$. We denote by $\rm{Gprj}\mbox{-}R$ the subcategory of $\mmod R$ formed by all Gorenstein projective $R$-modules.
\begin{example}
	let $R$ be a Gorenstein ring. It is known that over Gorenstein rings any module has of finite Gorenstein dimension \cite{EJ}. So, the subcategory $\rm{Gprj}\mbox{-}R$ satisfies the assumption of Theorem \ref{Theorem 3.3} and so we can apply the theorem to get the equivalences $\D_{\rm{sg}}(R)\simeq \frac{\Delta(\rm{Gprj}\mbox{-}R)}{\K^{\rm{b}}_{\rm{ac}}(\rm{Gprj}\mbox{-}R)}.$ The subcategory $\rm{Gprj}\mbox{-}R$ inherits an exact structure from $\mmod R$, which is Frobenius. Then by the Happel's work the stable category $\underline{\rm{Gprj}}\mbox{-}R$ becomes naturally a triangulated category. We know by  Buchweitz's theorem \cite{Bu} the existence of a triangle equivalence $\D_{\rm{sg}}(R)\simeq \underline{\rm{Gprj}}\mbox{-}R.$ Now by combing the equivalences we get the triangle equivalence $\underline{\rm{Gprj}}\mbox{-}R\simeq \frac{\Delta(\rm{Gprj}\mbox{-}R)}{\K^{\rm{b}}_{\rm{ac}}(\rm{Gprj}\mbox{-}R)}.$
\end{example}

Let us summarize the equivalences of the triangulated categories appeared above in the following diagram. To have the equivalences on the left side we need to assume that the global $\CX$-dimension of $R$ is finite.
\begin{equation*}\label{E:importantdiagram}
\begin{array}{c}
\begin{xy}\SelectTips{cm}{}
\xymatrix{
	\frac{ \displaystyle \Delta_{\CX}(R)}{\K^{\rm{b}}_{\rm{ac}}(\CX)}\simeq \frac{\displaystyle \K^{\rm{b}}(\CX)/\K^{\rm{b}}(\rm{prj}\mbox{-}R)}{\displaystyle \K^{\rm{b}}_{\rm{ac}}(\CX)} & & \\
\frac{\displaystyle \K^{\rm{b}}(\CX)/\K^{\rm{b}}_{\rm{ac}}(\CX)}{\displaystyle \K^{\rm{b}}(\rm{prj}\mbox{-}R)} \ar[r]^-{\sim} \ar[u]^-\sim &\frac{\displaystyle \D^{\rm{b}}(\mmod \CX)/\D^{\rm{b}}_{0}(\mmod \CX)}{\displaystyle \CP} \ar[r]^-\sim & \D_{\rm{sg}}(R)  \\
\frac{\displaystyle \K^{\rm{b}}(\CX)}{\displaystyle \K^{\rm{b}}_{\rm{ac}}(\CX)} \ar[r]^{\sim}\ar[u] & \frac{\displaystyle \D^{\rm{b}}(\mmod \CX)}{\displaystyle \D^{\rm{b}}_0(\mmod \CX)}\ar[u]^-{\mathsf{can}} \ar[r]^{\widetilde{\D^{\rm{b}}_{\vartheta}} } &  \D^{\rm{b}}(\mmod R)\ar[u] \\
\K^{\rm{b}}(\rm{prj}\mbox{-}R)\ar@{^{(}->}[u]\ar[r]^{\sim}	&\CP  \ar@{^{(}->}[u]\ar[r]^{\sim} &
\K^{\rm{b}}(\rm{prj}\mbox{-}R)\ar@{^{(}->}[u]
}\end{xy}
\end{array}
\end{equation*}

\section{ (relative) singularity categories via homotopy categories of exact complexes}
Throughout  this section,  let $\rm{prj}\mbox{-}R\subseteq \CX \subseteq \mmod R$. In this section we will study the Verdier quotient $\frac{\K^{\rm{b}}(\CX)}{\K^{\rm{b}}(\rm{prj}\mbox{-}R)}$ and show that it can be embedded into the homotopy category of exact complexes over $\CX.$ Then by applying our result from the preceding section in conjunction with the embedding we give a new description of $\D_{\rm{sg}}(R)$ such that only exact complexes are involved.

 Let $\K^{-,\rm{p}}(\CX)$ denote the subcategory of the homotopy category $\K(\CX)$ consisting of all  complexes which are homotopy-equivalent to a upper bounded exact  complex $\mathbf{P}$ over  $\CX$ such that for some $n,$ $\mathbf{P}^i \in  \rm{prj}\mbox{-}R$ for all $i\leqslant n.$ In fact, there are only finitely many terms of $\mathbf{P}$ to be non-projective. We know that the inclusion $\K_{\rm{ac}}(\Mod R)\hookrightarrow \K(\Mod R)$ has a right adjoint, denoted by $\Phi$. The right adjoint sends a  complex $\mathbf{X}$ to the complex $\mathbf{Y}$ fitting into a triangle $P_{\mathbf{X}}\rt \mathbf{X}\rt \mathbf{Y}\rt \mathbf{P}[1]$, where $P_{\mathbf{X}}$ is a $\K$-projective complex, see \cite[Proposition 1.6]{AJS}.   Since  $\K^{\rm{b}}(\CX)$ is  generated by all complexes concentrated at degree zero with terms in $\CX$, then  the essential image (in $\K(\CX)$) of the  restricted functor $\Phi$ over $\K^{\rm{b}}(\CX)$  is exactly $\K^{-,\rm{p}}(\CX)$ (see the proof of the next result for more explanations). Since the restricted functor vanishes on $\K^{\rm{b}}(\rm{prj}\mbox{-}R)$,  hence by the universal property there is  a triangle functor $\Psi:  \frac{\K^{\rm{b}}(\CX)}{\K^{\rm{b}}(\rm{prj}\mbox{-}R)} \rt \K^{-,\rm{p}}(\CX)$.
To distinguish, we sometimes use $\Psi_{\CX}$ when we are dealing   with several subcategories, simultaneously. For any $X$ in $\CX$, we fix the following exact complex in $\K^{-,\rm{p}}(\CX)$,
$$\mathbf{C}_X: \cdots \rt P^n_X\st{d^n_X}\rt \cdots \rt P^1_X\st{d^1_X}\rt P^0_X\st{d^0_X}\rt X\rt 0\rt \cdots, $$
where $P^i_X$ are in $\rm{prj}\mbox{-}R,$ and $X$ at degree $0$ and $P^i_X$ at degree $-i-1.$ Let $\mathbf{P}_X$ denote the deleted projective resolution induced by $\mathbf{C}_X$, there is the triangle $\mathbf{P}_X\rt X\rt \mathbf{C}_X\rt \mathbf{P}_X[1]$. Hence by the triangle, we may assume  $\Phi(X)=\mathbf{C}_X.$ We will use this convention throughout  the paper.
\begin{proposition}\label{proposition 3.7}
	The  functor $\Psi$, defined in the above, 
	 gives the following equivalence of triangulated categories
	 $$\frac{\K^{\rm{b}}(\CX)}{\K^{\rm{b}}(\rm{prj}\mbox{-}R)}\simeq \K^{-,\rm{p}}(\CX).$$	 
\end{proposition}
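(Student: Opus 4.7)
The plan is to verify that $\Psi$ is a well-defined triangle functor, essentially surjective, and fully faithful. Everything rests on the defining triangle $P_{\mathbf{X}} \to \mathbf{X} \to \Phi(\mathbf{X}) \to P_{\mathbf{X}}[1]$ together with the adjunction between $\Phi$ and the inclusion $\K_{\rm{ac}}(\Mod R) \hookrightarrow \K(\Mod R)$.

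\textbf{Well-definedness.} For $\mathbf{X} \in \K^{\rm{b}}(\CX)$, one can choose the $\K$-projective resolution $P_{\mathbf{X}} \to \mathbf{X}$ to be upper bounded, so $\Phi(\mathbf{X}) \simeq \cone(P_{\mathbf{X}} \to \mathbf{X})$ has degree-$i$ term $\mathbf{X}^i \oplus P_{\mathbf{X}}^{i+1}$. This term lies in $\CX$ (since $\CX \supseteq \rm{prj}\mbox{-}R$ and is closed under finite direct sums), the complex is acyclic and upper bounded, and for $i$ below the support of $\mathbf{X}$ the term reduces to $P_{\mathbf{X}}^{i+1}$, which is projective. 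Hence $\Phi(\mathbf{X}) \in \K^{-,\rm{p}}(\CX)$. Moreover, every bounded complex of projectives is $\K$-projective, so $\Phi$ annihilates $\K^{\rm{b}}(\rm{prj}\mbox{-}R)$ and the universal property of the Verdier quotient yields the induced functor $\Psi$.

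\textbf{Essential surjectivity.} Given $\mathbf{E} \in \K^{-,\rm{p}}(\CX)$ with $\mathbf{E}^i \in \rm{prj}\mbox{-}R$ for $i \leq n$, the brutal truncation $\sigma^{\leq n}\mathbf{E}$ is an upper bounded complex of projectives (hence $\K$-projective), while $\sigma^{>n}\mathbf{E}$ lies in $\K^{\rm{b}}(\CX)$. The degreewise split exact sequence $0 \to \sigma^{\leq n}\mathbf{E} \to \mathbf{E} \to \sigma^{>n}\mathbf{E} \to 0$ gives a triangle; applying $\Phi$ kills the $\K$-projective end and fixes the acyclic $\mathbf{E}$, producing $\Psi(\sigma^{>n}\mathbf{E}) = \Phi(\sigma^{>n}\mathbf{E}) \simeq \mathbf{E}$.

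\textbf{Fully faithfulness.} This is where the main work lies. Using the adjunction $\Phi \dashv i$ and the fact that $\Phi(\mathbf{X})$ and $\Phi(\mathbf{Y})$ are acyclic,
\[
\Hom_{\K^{-,\rm{p}}(\CX)}(\Psi \mathbf{X}, \Psi \mathbf{Y}) \;=\; \Hom_{\K(\Mod R)}(\mathbf{X}, \Phi \mathbf{Y}).
\]
Applying $\Hom_{\K}(\mathbf{X}, -)$ to the triangle $P_{\mathbf{Y}} \to \mathbf{Y} \to \Phi(\mathbf{Y}) \to P_{\mathbf{Y}}[1]$ gives a long exact sequence relating this group to $\Hom_\K(\mathbf{X}, \mathbf{Y})$ and $\Hom_\K(\mathbf{X}, P_{\mathbf{Y}}[j])$. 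The subtlety is that $P_{\mathbf{Y}}$ is only upper bounded and is not itself in $\K^{\rm{b}}(\rm{prj}\mbox{-}R)$. However, since $\mathbf{X}$ is bounded, every morphism and every homotopy $\mathbf{X} \to P_{\mathbf{Y}}$ has finite support and therefore factors through a bounded truncation $\sigma^{\geq -m}P_{\mathbf{Y}} \in \K^{\rm{b}}(\rm{prj}\mbox{-}R)$. I would combine this observation with the calculus-of-fractions description of morphisms in the Verdier quotient to identify the long exact sequence with $\Hom_{\K^{\rm{b}}(\CX)/\K^{\rm{b}}(\rm{prj}\mbox{-}R)}(\mathbf{X}, \mathbf{Y})$. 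The main obstacle is precisely this matching: verifying that the bounded projective truncations of $P_{\mathbf{Y}}$ exhaust the relevant roofs up to equivalence, so that the contributions from the unbounded tail of $P_{\mathbf{Y}}$ collapse to exactly the relation defining the quotient Hom-set.
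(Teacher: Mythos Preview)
Your well-definedness and essential surjectivity arguments are correct and cleaner than the paper's. The paper shows the essential image equals $\K^{-,\rm{p}}(\CX)$ by an inductive mapping-cone reduction on the number of non-projective terms (this is the source of Remark~\ref{remark1}); your single brutal truncation $\sigma^{>n}\mathbf{E}$ does the job in one step. For faithfulness your idea is exactly the paper's: if $\Phi(f)=0$ then $f$ factors through $P_{\mathbf Z}$, hence through a bounded truncation of $P_{\mathbf Z}$ lying in $\K^{\rm b}(\rm{prj}\mbox{-}R)$.

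The gap is in fullness, and you have correctly located it. The long exact sequence does \emph{not} directly compute the Verdier quotient Hom, because $P_{\mathbf Y}\notin\K^{\rm b}(\rm{prj}\mbox{-}R)$, so there is no single roof with denominator $\mathbf Y\to\Phi\mathbf Y$. What you are missing is the explicit construction of a cofinal family of valid roofs. Here is the fix that completes your approach: for $m$ large (below the support of $\mathbf X$ and of $\mathbf Y$), set $\mathbf Z_m:=\sigma^{>-m}\Phi\mathbf Y\in\K^{\rm b}(\CX)$. The composite $s_m:\mathbf Y\to\Phi\mathbf Y\to\mathbf Z_m$ has bounded cone, and that cone is an extension of $P_{\mathbf Y}[1]$ by $\sigma^{\leq -m}\Phi\mathbf Y[1]$, both bounded-above complexes of projectives; since the cone is bounded it lies in $\K^{\rm b}(\rm{prj}\mbox{-}R)$. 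Thus $s_m$ is a legitimate denominator. Given $g:\mathbf X\to\Phi\mathbf Y$, compose with the projection to obtain $g':\mathbf X\to\mathbf Z_m$; the roof $\mathbf X\xrightarrow{g'}\mathbf Z_m\xleftarrow{s_m}\mathbf Y$ is then sent by $\Psi$ back to $g$ (use that $\Phi(\sigma^{\leq -m}\Phi\mathbf Y)=0$ and that $\Phi$ applied to the unit $\mathbf Y\to\Phi\mathbf Y$ is the identity). This is precisely what the paper does, but only on generators: the paper reduces to stalk complexes $X,Y\in\CX$ and writes down the roof through $\mathbf Z=\sigma^{\geq j}\mathbf C_Y$ by hand. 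Your uniform argument is more conceptual; the paper's is more concrete but requires the extra step of reducing to generators.
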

\begin{proof} We first show that the essential image of $\Psi$, or equivalently $\Phi$,  is equal to $\K^{-,\rm{p}}(\CX)$. Hence $\Psi$ is dense. Since $\CX$ generates $\K^{\rm{b}}(\CX)$, meaning that $\K^{\rm{b}}(\CX)$ is the smallest  triangulated 
	subcategory contains $\CX,$ hence all $\Psi(X)=\mathbf{C}_X$ generates the essential image. Since any $\mathbf{C}_X$ belongs to $\K^{-,\rm{p}}(\CX)$, so the essential image of $\Psi$ is included in $\K^{-,\rm{p}}(\CX).$ 
 Conversely, Let $\mathbf{X}$ be a complex in  $\K^{-,\rm{p}}(\CX).$ We may present it as in the following 
$$\cdots \rt P^1\rt P^0\rt X^n\rt \cdots\rt X^1\rt X^0\rt 0$$
where $P^i$ in $\rm{prj}\mbox{-}R$. If all $X^i$, for $i\geqslant 1$ are projective, then $\mathbf{X}$ becomes a projective resolution of $X^0$. Hence in this case $\mathbf{X}\simeq \Psi(X^0)$, so it is in the essential image. Take a projective resolution of $X_0$
$$\cdots \rt Q^m\rt\cdots \rt Q^1\rt Q^0\rt X^0\rt 0.$$
By the lifting property we get the following chain map

\begin{equation*}
\xymatrix{\cdots\ar[r]&Q^m \ar[r]\ar[d] &\cdots\ar[r]&  Q^n \ar[r] \ar[d]&  \cdots \ar[r] &  Q^0 \ar[r] \ar[d]&  X^0\ar@{=}[d]\ar[r]&0\\ \cdots\ar[r]&P^n \ar[r]& \cdots\ar[r]& X^n \ar[r] & \cdots \ar[r] & X^1\ar[r] & X^0\ar[r]&0}
\end{equation*}
It is easy to see that the mapping cone of the above chain map is  homotopy equivalent to the following complex
$$\cdots \rt Q^n\oplus P^0\rt X^n\rt \cdots\rt Q^0\oplus X^2 \rt X^1\rt0. $$
 The obtained complex has non-projective terms in at most $n-1$ degrees. Repeating this argument we can reach to a  complex in  $\K^{-,\rm{p}}(\CX)$ with at most one non-projective term, which can be considered as   a shifting of a projective resolution of some object in $\CX$. It must be homotopy-equivalent to $\mathbf{C}_X[i]$ for some $i \in \Z$ and $X$ in $\CX.$ Therefore, by such a construction we can see $\mathbf{X}$  must be in the essential image, as by the construction it lies in a sequence of the triangles,  as described in Remark \ref{remark1}. Now we prove the functor $\Psi$ is full and faithful.  Recall that $\Psi$ acts on objects the same as $\Phi$, and for a right roof $\mathbf{X}\st{f}\rt \mathbf{Z} \st{g}\leftarrow \mathbf{Y}$, with $\rm{cone}(g)$ in $\K^{\rm{b}}(\rm{prj}\mbox{-}R)$, is defined by $\Phi(g)^{-1}\circ\Phi(f)$. Assume $\Phi$ becomes zero on a roof $\mathbf{X}\st{f}\rt \mathbf{Z} \st{g}\leftarrow \mathbf{Y}$. Then $\Phi(f)$ has to be zero. By the construction of $\Phi$ on morphisms there is the following commutative diagram with the  triangles on rows 
	$$\xymatrix{
	P_{\mathbf{X}} \ar[r] &\mathbf{X} \ar[d]^{f} \ar[r] & \Phi(\mathbf{X}) \ar[d]^{\Phi(f)}
	\ar[r] &P_{\mathbf{X}}[1] \\
	P_{\mathbf{Z}} \ar[r] & \mathbf{Z}  \ar[r] &\Phi(\mathbf{Z})
	\ar[r] &P_{\mathbf{Z}}[1].	} 	 $$
 Since $\Phi(f)$ is zero then $f$ factors through the morphism $P_{\mathbf{Z}}\rt \mathbf{Z}$. Since $\mathbf{X}$ and $\mathbf{Z}$ both are bounded complexes and $P_{\mathbf{Z}}$ a upper bounded complex, then we can deduce that the morphism $f$ factors through some  brutal truncation $P_{\mathbf{Z}}^{\geq m}$, that is in $\K^{\rm{b}}(\rm{prj}\mbox{-}\La)$. Hence $\Psi$ is faithful. For fullness, since $\CX$ generates $\K^{\rm{b}}(\CX)$, it suffices to  show that for any $X$ and $Y$ in $\CX$, and $i, j \in \mathbb{Z}$, the induced group homomorphism
$$\Hom_{\frac{\K^{\rm{b}}(\CX)}{\K^{\rm{b}}(\rm{prj}\mbox{-}\La)}}(X[i], Y[j])\rt \Hom_{\K^{-,\rm{p}}(\CX)}(\mathbf{C}_X[i], \mathbf{C}_Y[j])$$

is surjective. For simplicity, we only prove for $i=0.$ The exact complexes $\mathbf{C}_X$ and $\mathbf{C}_Y$, as our notations, are presented as below, respectively,
$$\mathbf{C}_X: \cdots \rt P^n_X\rt \cdots \rt P^1_X\rt P^0_X\rt X\rt 0\rt \cdots, $$
$$\mathbf{C}_Y: \cdots \rt P^n_Y\rt \cdots \rt P^1_Y\rt P^0_Y\rt Y\rt 0\rt \cdots, $$
where $X$ and $Y$ are settled in degree $0$. Let $[f]$ be a homotopy equivalence class in $\Hom_{\K^{-,\rm{p}}(\CX)}(X, Y[j])$. For $j>0,$ It is  easy to see that (only by using the lifting property)  $\Hom_{\K^{-,\rm{p}}(\CX)}(\mathbf{C}_X, \mathbf{C}_Y[j])=0$, so nothing to prove.   For $j=0$, the equivalence class of the right roof $X \st{f^0}\rt Y \st{1}\leftarrow Y$ is mapped into $[f]$ by $\Psi$.  It remains for the case  $j < 0$. The chain map $f$ is presented as below
\begin{equation*}
\xymatrix{\cdots\ar[r]&P^0_X \ar[r]\ar[d]^{f^{1}} & X \ar[r]\ar[d]^{f^0}&  0 &   &   &&\\ \cdots\ar[r]&P^{-j}_Y \ar[r]& P^{-j-1}_Y\ar[r]& \cdots \ar[r]&  P^0_Y \ar[r] & Y\ar[r] & 0&}
\end{equation*}

Put $\mathbf{Z}$ to be the complex $0 \rt P^{-j-1}_Y\rt \cdots \rt P^0_Y\rt Y\rt 0$, where $P^{-j-1}_Y$ is at degree $0$. Let $g:X\rt \mathbf{Z}$, resp. $s: Y[j] \rt \mathbf{Z},$ be a chain map to be zero on all degrees except degree $0$, resp. $ -j$,  with $f^0$, resp. $\rm{id_Y}$. We claim that the equivalence class of the right roof $X\st{[g]}\rt \mathbf{Z}\st{[s]}\leftarrow Y[j]$
is mapped into $[f]$.  Consider the following commutative diagram

\begin{equation*}
\xymatrix{\cdots\ar[r]&P^{-j+2}_Y \ar[r]\ar[d] &P^{-j+1}_Y\ar[r]\ar[d]&  P^{-j}_Y \ar[r] \ar[d]&  \cdots \ar[r] & 0 \ar[r] \ar[d]&  0\ar[d]\ar[r]&0\\ \cdots\ar[r]&0 \ar[d]\ar[r]& 0\ar[d]\ar[r]& P^{-j-1}_Y \ar@{=}[d]\ar[r] & \cdots \ar[r] & P^0_Y\ar@{=}[d]\ar[r] & Y\ar@{=}[d]\ar[r]&0\\\cdots \ar[r]&P^{-j+1}_Y\ar[r]&P^{-j}_Y\ar[r]&P^{-j-1}_Y\ar[r]&\cdots\ar[r]&P^0_Y\ar[r]&Y\ar[r]&0.}
\end{equation*}
 From the above diagram we have the following triangle 
 $$\mathbf{C}^{\leq j-1}_Y[j-1]\rt \mathbf{Z}\rt \mathbf{C}_Y[j]\rt \mathbf{C}^{\leq j-1}_Y[j]. $$
Since $\mathbf{C}^{\leq j-1}_Y[j-1]$ belongs to $\K^{-}(\rm{prj}\mbox{-}\La)$
and $\mathbf{C}_Y[j]$  an exact complex, then by the triangle we can conclude that  $\Phi(\mathbf{Z})\simeq \mathbf{C}_Y[j].$ In addition,  we can see the equalities $\Phi([g])=[f]$ and $\Phi([s])=\rm{id}_{\mathbf{C}_Y[j]}$,  only by the  definition of $\Phi$ on morphisms together with using the triangle. Hence $\Psi$ assigns the morphism $[f]$ to the equivalence class of the right  roof $X\st{[g]}\rt \mathbf{Z}\st{[s]}\leftarrow Y[j]$, what we wanted to prove.
 So the proof is now complete.
\end{proof}

\begin{remark}\label{remark1}
As a result from the first part of the proof of Proposition \ref{proposition 3.7}, we can see for any complex $\mathbf{X}$ in $\K^{-, \rm{p}}(\CX)$	there is a finite sequences  of triangles as the following
$$\mathbf{C}_{M_i}[r_i]\rt \mathbf{X}_i\rt \mathbf{X}_{i+1}\rt \mathbf{C}_{M_i}[r_i+1],$$
where $0\leqslant i \leqslant n$, $r_i \in \mathbb{Z}$ and $M_i \in \CX$, $\mathbf{X}_0=\mathbf{X}$ and $\mathbf{X}_{n+1}=0.$ 
\end{remark}

 As an application of the above proposition in the next result  we give a  nice  description for the usual singularity category $\D_{\rm{sg}}(R)$.
\begin{theorem}\label{Theorem 3.10} Let $\rm{prj}\mbox{-}R\subseteq \CX \subseteq \mmod R$ be a contravariantly finite subcategory. 	Assume the  global $\CX$-dimension of $R$ is finite. Then, there exists the following equivalence of  triangulated categories
	$$\mathbb{D}_{\rm{sg}}(R)\simeq \frac{\displaystyle \mathbb{K}^{-, \rm{p}}(\CX)}{ \displaystyle \mathbb{K}^{\rm{b}}_{\rm{ac}}(\CX)}.$$
\end{theorem}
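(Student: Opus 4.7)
The plan is to combine the two results established earlier in this section. By Theorem \ref{Theorem 3.3}, the assumption that the global $\CX$-dimension of $R$ is finite already yields
$$\mathbb{D}_{\rm{sg}}(R) \simeq \frac{\K^{\rm{b}}(\CX)/\K^{\rm{b}}(\rm{prj}\mbox{-}R)}{\K^{\rm{b}}_{\rm{ac}}(\CX)},$$
so what remains is to replace the numerator by $\mathbb{K}^{-,\rm{p}}(\CX)$ via Proposition \ref{proposition 3.7} and to verify that the subcategory $\K^{\rm{b}}_{\rm{ac}}(\CX)$ appearing in the denominator is preserved under that replacement.

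First I would invoke Proposition \ref{proposition 3.7} to fix the triangle equivalence
$$\Psi : \K^{\rm{b}}(\CX)/\K^{\rm{b}}(\rm{prj}\mbox{-}R) \xrightarrow{\sim} \mathbb{K}^{-,\rm{p}}(\CX).$$
Next I would unwind the construction of $\Psi$: on objects it agrees with the functor $\Phi$, which assigns to $\mathbf{X} \in \K^{\rm{b}}(\CX)$ the third vertex of the triangle $P_{\mathbf{X}} \rt \mathbf{X} \rt \Phi(\mathbf{X}) \rt P_{\mathbf{X}}[1]$, where $P_{\mathbf{X}} \rt \mathbf{X}$ is a $\K$-projective resolution. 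When $\mathbf{X}$ lies in $\K^{\rm{b}}_{\rm{ac}}(\CX)$, the complex $P_{\mathbf{X}}$ is both $\K$-projective and acyclic, hence nullhomotopic, and the triangle collapses to an isomorphism $\Phi(\mathbf{X}) \simeq \mathbf{X}$ in $\K(\Mod R)$. On the other hand, a bounded exact complex with terms in $\CX$ is an upper-bounded exact complex whose components are eventually zero, in particular projective, and so it sits inside $\mathbb{K}^{-,\rm{p}}(\CX)$. Consequently $\Psi$ carries $\K^{\rm{b}}_{\rm{ac}}(\CX) \subseteq \K^{\rm{b}}(\CX)/\K^{\rm{b}}(\rm{prj}\mbox{-}R)$ essentially onto $\K^{\rm{b}}_{\rm{ac}}(\CX) \subseteq \mathbb{K}^{-,\rm{p}}(\CX)$.

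Since a triangle equivalence sends thick subcategories to thick subcategories and descends to an equivalence between the associated Verdier quotients, the identification just made yields
$$\frac{\K^{\rm{b}}(\CX)/\K^{\rm{b}}(\rm{prj}\mbox{-}R)}{\K^{\rm{b}}_{\rm{ac}}(\CX)} \simeq \frac{\mathbb{K}^{-,\rm{p}}(\CX)}{\K^{\rm{b}}_{\rm{ac}}(\CX)},$$
and composing with the equivalence of Theorem \ref{Theorem 3.3} produces the claimed description of $\mathbb{D}_{\rm{sg}}(R)$.

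The only delicate point is verifying that $\Psi(\K^{\rm{b}}_{\rm{ac}}(\CX)) = \K^{\rm{b}}_{\rm{ac}}(\CX)$ as subcategories of $\mathbb{K}^{-,\rm{p}}(\CX)$, and that reduces to the standard fact that a $\K$-projective acyclic complex is nullhomotopic, so the natural comparison map between $\Phi(\mathbf{X})$ and $\mathbf{X}$ arising from the defining triangle is an isomorphism whenever $\mathbf{X}$ is already acyclic. Beyond this observation, the argument is a straightforward composition of the equivalences already produced in Theorem \ref{Theorem 3.3} and Proposition \ref{proposition 3.7}.
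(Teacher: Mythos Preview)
Your proposal is correct and follows essentially the same route as the paper: both arguments invoke Proposition~\ref{proposition 3.7} to identify $\K^{\rm b}(\CX)/\K^{\rm b}(\rm{prj}\mbox{-}R)$ with $\K^{-,\rm p}(\CX)$, check that $\Psi$ restricts to (something isomorphic to) the identity on $\K^{\rm b}_{\rm ac}(\CX)$, and then pass to Verdier quotients and combine with Theorem~\ref{Theorem 3.3}. Your justification that $\Phi(\mathbf{X})\simeq\mathbf{X}$ for acyclic $\mathbf{X}$ via the nullhomotopy of a $\K$-projective acyclic complex is exactly the content of the paper's remark that the restriction of $\Psi$ to $\K^{\rm b}_{\rm ac}(\CX)$ is isomorphic to the identity functor.
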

\begin{proof}
	By the definition of the equivalence $\Psi$, as defined in the beginning of this subsection, we have the following commutative diagram with equivalences in the rows 
	\[ \xymatrix{ \Delta_{\CX}(R)\ar[rr]^{\Psi} && \K^{-,\rm{p}}(\CX) \\
		\K^{\rm{b}}_{\rm{ac}}(\CX)	\ar@{^(->}[u] \ar[rr]^{\Psi\mid} &&\K^{\rm{b}}_{\rm{ac}}(\CX) \ar@{^(->}[u]}\]
	In fact, the restriction of $\Psi$ on $\K^{\rm{b}}_{\rm{ac}}(\CX)$ in the above diagram is isomorphic to the identity functor. Now in view of the above diagram the functor $\Psi$ induces a triangle equivalence between the Verdier quotients $\frac{\Delta_{\CX}(R)}{\K^{\rm{b}}_{\rm{ac}}(\CX)}$ and $\frac{\K^{-,\rm{p}}(\CX)}{\K^{\rm{b}}_{\rm{ac}}(\CX)}$. But we know from Theorem \ref{Theorem 3.3}, the first Verdier quotient is indeed equivalent to $\D_{\rm{sg}}(R)$. So we are done.		
\end{proof}

Let us give another application of Proposition \ref{proposition 3.7}. Let $k$ be a  commutative artinian ring. We recall that a $k$-category $\mathcal{C}$ is said to be {\it Hom-finite} if for each two objects $X$ and $Y$ in $\mathcal{C}$, $\Hom_{\mathcal{C}}(X, Y)$ has finite length.\\
\begin{proposition} Assume $\La$ is an Artin $k$-algebra.
	Then the  category $ \frac{\K^{\rm{b}}(\CX)}{\K^{\rm{b}}(\rm{prj}\mbox{-}\La)}$ is $\rm{Hom}$-finite. In particular, If $\CX$ is a contravariantly finite subcategory in $\mmod \La$ and  $\La$ has finite global $\CX$-dimension, then the relative singularity category $\Delta_{\CX}(\La)$ is Hom-finite. 
\end{proposition}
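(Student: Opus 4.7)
The plan is to pass through the equivalence of Proposition~\ref{proposition 3.7}, which identifies $\frac{\K^{\rm{b}}(\CX)}{\K^{\rm{b}}(\rm{prj}\mbox{-}\La)}$ with $\K^{-,\rm{p}}(\CX)$, and verify Hom-finiteness in the latter. So it suffices to prove that $\Hom_{\K^{-,\rm{p}}(\CX)}(\mathbf{X},\mathbf{Y})$ has finite $k$-length for every pair $\mathbf{X},\mathbf{Y}\in\K^{-,\rm{p}}(\CX)$.

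By Remark~\ref{remark1}, every object of $\K^{-,\rm{p}}(\CX)$ sits in a finite tower of triangles whose outer terms have the form $\mathbf{C}_M[r]$ with $M\in\CX$ and $r\in\mathbb{Z}$. Since finite-length $k$-modules are closed under extensions, applying the two long exact sequences of $\Hom_{\K^{-,\rm{p}}(\CX)}(-,?)$ and $\Hom_{\K^{-,\rm{p}}(\CX)}(?,-)$ to such a tower, an easy induction on the length of the tower (both in the first and second variable) reduces the problem to showing that
$$\Hom_{\K^{-,\rm{p}}(\CX)}(\mathbf{C}_X[i],\mathbf{C}_Y[j])$$
has finite $k$-length for all $X,Y\in\CX$ and $i,j\in\mathbb{Z}$. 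After shifting we may take $i=0$.

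For $j>0$ the group vanishes, as noted inside the proof of Proposition~\ref{proposition 3.7}. For $j\leqslant 0$ the core claim to establish will be the natural isomorphism
$$\Hom_{\K^{-,\rm{p}}(\CX)}(\mathbf{C}_X,\mathbf{C}_Y[j])\;\simeq\;\underline{\Hom}_R(X,\Omega_R^{-j}(Y)),$$
which realises the left-hand side as a quotient of $\Hom_R(X,\Omega_R^{-j}(Y))$; the latter is of finite $k$-length because $\La$ is Artin and both arguments lie in $\mmod\La$, and the quotient inherits this property. To establish the isomorphism, one notes that the chain-map condition in positive degrees of $\mathbf{C}_X$ forces the degree-zero component $f^0\colon X\to(\mathbf{C}_Y[j])^0=P_Y^{-j-1}$ to land in $\Omega_R^{-j}(Y)=\Ker(P_Y^{-j-1}\to P_Y^{-j-2})$; conversely, the classical comparison theorem lifts any map $X\to\Omega_R^{-j}(Y)$ to a chain map $\mathbf{C}_X\to\mathbf{C}_Y[j]$ using the projectivity of the terms of $\mathbf{C}_X$, and a routine induction on the negative degrees produces a null-homotopy for any chain map whose induced map $X\to\Omega_R^{-j}(Y)$ factors through a projective.

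The ``in particular'' claim is then a formal consequence: under the extra hypotheses that $\CX$ is contravariantly finite and $\La$ has finite global $\CX$-dimension, the equivalence $\Delta_{\CX}(\La)\simeq\frac{\K^{\rm{b}}(\CX)}{\K^{\rm{b}}(\rm{prj}\mbox{-}\La)}$ recorded just after Lemma~\ref{Lemma3.2} transports the Hom-finiteness to $\Delta_{\CX}(\La)$. The main technical point is the null-homotopy argument in the third paragraph, which is the precise statement that chain maps between complete resolutions, modulo homotopy, compute stable Hom with the appropriate syzygy shift; the d\'evissage via Remark~\ref{remark1} and the ``in particular'' deduction are then routine.
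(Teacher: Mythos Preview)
Your proof is correct and follows essentially the same route as the paper: both pass through the equivalence of Proposition~\ref{proposition 3.7}, reduce via Remark~\ref{remark1} to computing $\Hom_{\K}(\mathbf{C}_X,\mathbf{C}_Y[j])$, and identify this with $\underline{\Hom}_\La(X,\Omega_\La^{-j}(Y))$ for $j\leqslant 0$ (vanishing for $j>0$). Your write-up is slightly more explicit about the d\'evissage step and the ``in particular'' deduction, but the argument is the same.
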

\begin{proof}
Thanks to the equivalence given in Proposition \ref{proposition 3.7} we will show the statement for the triangulated category $\K^{-, \rm{p}}(\CX)$ instead.	As Remark \ref{remark1} says any complex in  $\K^{-, \rm{p}}(\CX)$ can be constructed via complexes of the form $\mathbf{C}_X$ for some $X \in \CX.$ Consequently, we only need to show that $\Hom_{\K}(\mathbf{C}_X[i], \mathbf{C}_Y[j])$ is of finite length (in $\mmod k$) for any $X, Y \in \CX$ and $i, j \in \Z.$ Without of loss generality, we may assume that $i=0.$ If $j>0$, then $\Hom_{\K}(\mathbf{C}_X, \mathbf{C}_Y[j])=0$, so nothing remains to prove. For $j=0$, it is clear to see that $\Hom_{\K}(\mathbf{C}_X, \mathbf{C}_Y)\simeq \underline{\rm{Hom}}_{\La}(X, Y),$ so we are done as the latter one is of finite length. 
It remains for  $ j< 0.$ Take $[f]$ in  $\Hom_{\K}(\mathbf{C}_X, \mathbf{C}_Y[j])$. Since $d^{j-1}_Y\circ f^0=0,$ hence  $f^0$ induces a morphism from $X$ to $\Omega_{\La}^{-j}(Y),$ say $g$. Mapping $[f]$ to $\underline{g}$ gives an isomorphism of $k$-modules $\Hom_{\K}(\mathbf{C}_X, \mathbf{C}_Y[j])\simeq \underline{\rm{Hom}}_{\La}(X, \Omega^{-j}_{\La}(Y)).$ The isomorphism completes the proof.
	\end{proof}

\section{ singular equivalences via relative singular and  stable equivalences  }
In this section by our results on relative singularity categories we will give some ways to determine two rings to be singular equivalence. We follow these two following observations. First, assume $\CX \subseteq \mmod R$ and $\CX' \subseteq \mmod R'$ be the same as in Theorem \ref{Theorem 3.10}. If there is a triangle equivalence between the relative singularity categories $\Delta_{\CX}(R)\simeq \Delta_{\CX'}(R')$, then it induces a triangle equivalence between the singularity categories $\D_{\rm{sg}}(R)\simeq\D_{\rm{sg}}(R')$, by Theorem \ref{Theorem 3.10}, in case that the equivalence functor can be restricted to the homotopy categories $\K^{\rm{b}}_{\rm{ac}}(\CX)$ and $\K^{\rm{b}}_{\rm{ac}}(\CX')$. Hence, we explore to find some conditions to have the restriction. This idea is exactly applied in \cite{KY}.
Another way, assume that  there is an equivalence between  the stable categories $\underline{\CX}$ and $\underline{\CX}'$, say $F.$ We know that by \cite[Theorem 3.1]{CC} we can  embed the stable categories $\underline{\CX}$ and $\underline{\CX'}$ in the quotient categories $\frac{\K^{\rm{b}}(\CX)}{\K^{\rm{b}}(\rm{prj}\mbox{-}R)}$ and $\frac{\K^{\rm{b}}(\CX')}{\K^{\rm{b}}(\rm{prj}\mbox{-}R')}$, respectively. The embeddings are defined in a canonical way, namely, by sending an object $X \in \CX$ to the complex concentrated at degree zero with term $X$, similarly for the case $\CX'$. What we will follow in this section is to see that how one can extend the equivalence $F$ under certain conditions to a triangle equivalence between the quotient categories, then by using  the first  observation to get a singular equivalence.\\
Let us begin with the following definition. 
\begin{definition}\label{Definition 4.1}
	We say that $\rm{prj}\mbox{-}R \subseteq \CX \subseteq \mmod R$ satisfies condition $(*)$: If for any $X \in \CX$, $\underline{\rm{Hom}}(Y, \Omega^n_R(X))=0$ for all $Y$ in $\CX$ and all but finitely many $n>0$, then the projective dimension of $X$ is finite. For the case that $R$ is an Artin algebra $\La$. Then, the condition $(*)$ is equivalent to say that the projective dimension of an indecomposable module $ X \in \CX$ is finite if and only if  any indecomposable module $Y$ in $\CX$ appears  up to isomorphism as a direct summand of finitely many of syzygies $\Omega^i_{\La}(X).$ Here we need to assume that the syzygies obtained by the minimal projective resolutions.
\end{definition} 
Following \cite[Definition 6.16]{KY}, we make the following definition.
\begin{definition}
	For a triangulated category $\CT$ the triangulated subcategory
	$$\CT_r:=\{X \in \CT \mid \ \Hom_{\CT}(Y, X[i])=0\ \  \text{ for all} \ Y \ \text{and all but finitely many} \ i \in \mathbb{Z}\}$$ is defined. More explanation, an  object $X$ is in $\CT_r$ if for any $Y$ in $\CT$, the Hom-spaces $\Hom_{\CT}(Y, X[i])$ is non-zero for only  finitely many $i$.	 
\end{definition}
The  $\CT_r$ in \cite{KY} is called the {\it subcategory of right homologically finite objects} of $\CT$.
\begin{proposition}\label{Prop 4.3}
Assume that $\rm{prj}\mbox{-}R \subseteq \CX \subseteq \mmod R$ satisfies the condition $(*)$ and quasi-resolving. Then,	$\K^{-, \rm{p}}(\CX)_r=\K^{\rm{b}}_{\rm{ac}}(\CX)$.
\end{proposition}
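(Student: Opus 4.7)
The plan is to prove both inclusions; the substance lives in the second one. For the easy inclusion $\K^{\rm{b}}_{\rm{ac}}(\CX) \subseteq \K^{-, \rm{p}}(\CX)_r$, let $X \in \K^{\rm{b}}_{\rm{ac}}(\CX)$ be supported in degrees $[a,b]$ and $Y \in \K^{-, \rm{p}}(\CX)$. By Remark \ref{remark1}, $Y$ is built from a finite sequence of triangles involving complexes of the form $\mathbf{C}_M[r]$, so by a routine triangulated induction it suffices to treat $Y = \mathbf{C}_M[r]$. For $i < a+r$ the supports of $\mathbf{C}_M[r]$ (in degrees $\leqslant -r$) and of $X[i]$ (in $[a-i,b-i]$) are disjoint, so $\Hom_{\K}(\mathbf{C}_M[r], X[i]) = 0$ trivially; for $i > b+r$ every chain map $\mathbf{C}_M[r] \rt X[i]$ has its nonzero components confined to the projective tail of $\mathbf{C}_M[r]$ and hence factors through an upper bounded complex of projectives, which maps null-homotopically into the bounded acyclic target $X[i]$.

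For $\K^{-, \rm{p}}(\CX)_r \subseteq \K^{\rm{b}}_{\rm{ac}}(\CX)$, take $P \in \K^{-, \rm{p}}(\CX)_r$ and represent it as an upper bounded exact complex $\cdots \rt P^{n-1} \rt P^n \rt X^{n+1} \rt \cdots \rt X^N \rt 0$ with $P^i \in \rm{prj}\mbox{-}R$ for $i \leqslant n$ and $X^i \in \CX$ otherwise. Set $M := \Ker(d^{n+1})$. Walking downward from $Z^{N+1}(P) = 0$ through the short exact sequences $0 \rt Z^j(P) \rt P^j \rt Z^{j+1}(P) \rt 0$ (epimorphisms between objects of $\CX$), the quasi-resolving hypothesis forces $Z^j(P) \in \CX$ for every $j$; in particular $M \in \CX$, and iterating the usual syzygy sequence then gives $\Omega^k_R M \in \CX$ for every $k \geqslant 0$.

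The pivotal input is the triangle produced by the intelligent truncation $0 \rt \tau^{\leqslant n} P \rt P \rt \tau^{\geqslant n+1} P \rt 0$. Because $P$ is exact, $\tau^{\leqslant n} P = [\cdots \rt P^{n-1} \rt \Omega_R M \rt 0]$ identifies with $\mathbf{C}_{\Omega_R M}[-n]$ via the projective resolution of $\Omega_R M = \Ker(d^n)$ afforded by the tail of $P$; meanwhile $\tau^{\geqslant n+1} P = [0 \rt M \hookrightarrow X^{n+1} \rt \cdots \rt X^N \rt 0]$ is a bounded exact complex all of whose terms lie in $\CX$, so it belongs to $\K^{\rm{b}}_{\rm{ac}}(\CX)$. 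Feeding the resulting triangle
\[
\mathbf{C}_{\Omega_R M}[-n] \lrt P \lrt \tau^{\geqslant n+1} P \lrt \mathbf{C}_{\Omega_R M}[-n][1]
\]
into $\Hom_{\K}(\mathbf{C}_Y, -[i])$ for arbitrary $Y \in \CX$, the first inclusion gives $\Hom_{\K}(\mathbf{C}_Y, \tau^{\geqslant n+1} P[i]) = 0$ for all but finitely many $i$, and the hypothesis on $P$ gives the same vanishing for $\Hom_{\K}(\mathbf{C}_Y, P[i])$. The long exact sequence then forces $\Hom_{\K}(\mathbf{C}_Y, \mathbf{C}_{\Omega_R M}[-n][i]) = 0$ outside a finite range of $i$. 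Invoking the formula $\Hom_{\K}(\mathbf{C}_Y, \mathbf{C}_N[j]) \cong \underline{\Hom}_R(Y, \Omega^{-j}_R N)$ for $j<0$ from the preceding Hom-finiteness proposition and reindexing, this translates into $\underline{\Hom}_R(Y, \Omega^k_R M) = 0$ for every $Y \in \CX$ and all but finitely many $k > 0$. Condition $(*)$ now delivers $\pd_R M < \infty$, so $\mathbf{C}_{\Omega_R M}[-n]$ is itself a bounded exact complex in $\CX$ and hence lies in $\K^{\rm{b}}_{\rm{ac}}(\CX)$; since $\K^{\rm{b}}_{\rm{ac}}(\CX)$ is a triangulated subcategory of $\K(\CX)$ and both outer terms of the displayed triangle belong to it, so does $P$.

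The main obstacle is engineering the truncation triangle cleanly --- identifying $\tau^{\leqslant n} P$ with $\mathbf{C}_{\Omega_R M}[-n]$ (keeping shifts and signs straight) and checking that $\tau^{\geqslant n+1} P$ is a bounded acyclic complex whose terms all live in $\CX$. Once the triangle is in hand, everything else is a formal long exact sequence chase that feeds condition $(*)$.
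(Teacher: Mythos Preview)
Your overall strategy matches the paper's: both arguments isolate the module $M=\Ker(d^{n+1})$ sitting at the transition between the projective tail and the $\CX$-part of $P$, compute that $\underline{\Hom}_R(Y,\Omega_R^k M)$ vanishes for almost all $k$, and then invoke condition $(*)$. The first inclusion is handled correctly.

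There is, however, a genuine gap in your reverse inclusion. The ``triangle produced by the intelligent truncation'' $0\rt\tau^{\leqslant n}P\rt P\rt\tau^{\geqslant n+1}P\rt 0$ is \emph{not} a distinguished triangle in $\K(\CX)$ in general. A short exact sequence of complexes yields a triangle in the homotopy category only when it is degreewise split; here, in degree $n$, the sequence reads $0\rt\Omega_R M\rt P^n\rt M\rt 0$, which splits precisely when $M$ is projective --- exactly the case you are trying to rule out. So the long exact sequence you feed $\Hom_{\K}(\mathbf{C}_Y,-[i])$ into is not available, and the deduction that $\Hom_{\K}(\mathbf{C}_Y,\mathbf{C}_{\Omega_R M}[-n][i])$ vanishes for almost all $i$ does not follow from this step as written. (The sequence does give a triangle in the derived category, but you are working in $\K$.)

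The paper sidesteps this entirely: instead of setting up a truncation triangle, it observes directly that for $i$ large enough the complex $\mathbf{C}_Y[i]$ is supported in degrees where $P$ and its projective tail (equivalently $\tau^{\leqslant n}P\simeq\mathbf{C}_{\Omega_R M}[-n]$) coincide, so a one-line lifting argument gives $\Hom_{\K}(\mathbf{C}_Y[i],P)\simeq\underline{\Hom}_R(Y,\Omega_R^i M)$. This is exactly the isomorphism you extract from your triangle, obtained without it. Replacing your triangle paragraph with this direct computation repairs the proof, and the remainder of your argument (applying condition $(*)$ and concluding $P\in\K^{\rm b}_{\rm ac}(\CX)$) then goes through.
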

\begin{proof}
Let $\mathbf{X}$ be in $\K^{\rm{b}}_{\rm{ac}}(\CX)$ and $\mathbf{Y}$ in $\K^{-, \rm{p}}(\CX).$	Since $\mathbf{X}$ is a bounded complex then it is trivial that $\Hom_{\K}(\mathbf{Y}, \mathbf{X}[i])=0$ for sufficiently small $i.$ By the lifting property one can see that $\Hom_{\K}(\mathbf{Y}, \mathbf{X}[i])=0$ for $i>|m|+|n|$, where $m$, resp. $n,$  is the least degree such that $\mathbf{Y}^m$ is non-projective, resp. the biggest degree with  $\mathbf{X}^n\neq 0.$   Remember the complexes are assumed that  the differentials raise degree. So $\K^{\rm{b}}_{\rm{ac}}(\CX)\subseteq \K^{-, \rm{p}}(\CX)_r$. For the inverse inclusion, let $\mathbf{X} \in \K^{-, \rm{p}}(\CX)_r.$ Denote by $n$ the biggest degree with $\mathbf{X}^n\neq 0$ and $m$ the least degree such that $\mathbf{X}$ is non-projective.
For simplicity, we may assume that $n=0.$ For any $Y$ in $\CX$ and $i>|m|$, we infer $\Hom_{\K}(\mathbf{C}_Y[i], \mathbf{X})\simeq \underline{\rm{Hom}}_{R}(Y, \Omega^i_R(\ker d^m))$. Note that since $\CX$ is closed under kernels of epimorphisms then $\ker d^m$ belongs to $\CX.$ By the condition $(*)$  and in view of the isomorphisms we get the projective dimension of $\ker d^m$ is finite. This implies that $\mathbf{X}$ must be homotopy-equivalent to  a bounded complex, as desired.
\end{proof}
The above proposition  shows that $\K^{\rm{b}}_{\rm{ac}}(\CX)$ has a categorical characterization which is vital for our next result.\\
Let $\La$ be an Artin algebra. We say that a subcategory of $\mmod \La$ is of {\it finite representation type} if it has up to isomorphism only finitely many indecomposable modules. In case that $\mmod \La$ is of finite representation type, the $\La$ is called representation-finite.
\begin{proposition}\label{Proposition 4.3}
Assume $\La$ is an Artin algebra.	Let $\rm{prj}\mbox{-}\La\subseteq \CX \subseteq \mmod \La$ be closed under syzygies and of finite representation type. Then $\CX$ satisfies the condition $(*)$.
\end{proposition}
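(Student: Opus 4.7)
The strategy is to exploit finite representation type to collapse the quantifier ``for all $Y \in \CX$'' in the hypothesis of $(*)$ to a quantifier over a finite set, after which a single uniform threshold $N$ works, and then use Krull--Schmidt plus closure under syzygies to conclude that syzygies vanish past $N$.

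First I would enumerate the finitely many non-isomorphic indecomposable objects $Y_1,\dots,Y_s$ of $\CX$, available since $\CX$ has finite representation type. Given an $X \in \CX$ satisfying the hypothesis of $(*)$, applying it to each $Y_i$ separately yields integers $N_i$ with $\underline{\Hom}_\La(Y_i,\Omega^n_\La(X))=0$ for all $n > N_i$; setting $N := \max_{1 \leqslant i \leqslant s} N_i$, which is finite because $s$ is, produces a single bound: $\underline{\Hom}_\La(Y_i,\Omega^n_\La(X))=0$ for every $i$ and every $n > N$.

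Next I would fix minimal projective resolutions of $X$, so that each syzygy $\Omega^n_\La(X)$ ($n \geqslant 1$) has no nonzero projective direct summand. Because $\CX$ is closed under syzygies, $\Omega^n_\La(X) \in \CX$, and by Krull--Schmidt in $\mmod \La$ it decomposes as $\bigoplus_{i=1}^s Y_i^{a_{n,i}}$ with every indecomposable summand (those $Y_j$ with $a_{n,j} > 0$) necessarily non-projective by minimality. Suppose, towards a contradiction, that $\Omega^n_\La(X) \neq 0$ for some $n > N$, and pick such a non-projective summand $Y_j$. Since $Y_j$ is non-projective, $\underline{\End}_\La(Y_j) \neq 0$, and this group embeds as a direct summand of $\underline{\Hom}_\La(Y_j,\Omega^n_\La(X))$ via the inclusion $Y_j \hookrightarrow \Omega^n_\La(X)$, contradicting the uniform vanishing established in the previous step. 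Therefore $\Omega^n_\La(X) = 0$ for all $n > N$, which gives $\pd_\La X \leqslant N < \infty$, verifying condition $(*)$ for $X$.

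The only delicate point is the appeal to minimality of the projective resolution: this is what forces a nonzero $\Omega^n_\La(X)$ to contain a non-projective indecomposable summand, and hence an $i$ for which $\underline{\Hom}_\La(Y_i, \Omega^n_\La(X)) \neq 0$. Without minimality one would have to worry about resolutions that grow only by adding projective summands while keeping the stable hom trivially zero; this is precisely the clause ``we need to assume that the syzygies [are] obtained by the minimal projective resolutions'' in the equivalent formulation given in Definition \ref{Definition 4.1}. Equivalently, one can phrase the whole argument inside the stable category of $\CX$, where projective summands are killed and the minimality issue disappears.
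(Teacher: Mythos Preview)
Your proof is correct and takes essentially the same approach as the paper's. The only cosmetic difference is that the paper argues via the equivalent Artin-algebra reformulation of $(*)$ stated in Definition~\ref{Definition 4.1} and a pigeonhole argument (if $\pd X=\infty$ then some indecomposable recurs among the minimal syzygies infinitely often), whereas you work directly from the stable-Hom formulation; the underlying mechanism---finite representation type collapses the quantifier over $Y$ to a finite check, closure under syzygies keeps $\Omega^n_\La(X)$ in $\CX$, and minimality guarantees a non-projective indecomposable summand witnessing nonzero stable Hom---is identical.
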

\begin{proof}
Let $X$ in $\CX$ satisfy the property in the condition $(*)$, i.e.,  any indecomposable module $Y$ in $\CX$ is isomorphic to a direct summand of finitely many syzygies $\Omega^i_{\La}(X)$.  Since $\CX$ is closed under syzygies then the set $\{\Omega^i_{\La}(X)\}_{i \geqslant 0}$	lies in $\CX.$ But since $\CX$ is of finite representation type, then there an indecomposable module $Y$ in $\CX$ such that it is isomorphic to a direct summand of  $\Omega^i_{\La}(X)$ for infinitely many $i.$ This give  a contradiction. So we are done.
\end{proof}
We remind by our convention any subcategory in the paper is assumed to be closed under direct summands. We use this convention in the proof of the above proposition. 
\begin{proposition}\label{Proposition 4.4}
	Assume that $ \rm{prj}\mbox{-}R \subseteq \CX \subseteq \mmod R$ and $\rm{prj}\mbox{-}R' \subseteq\CX' \subseteq \mmod R' $ are contravariantly finite subcategories in the corresponding ambient categories. Further, $R$, resp. $R'$, has finite global $\CX$-dimension, resp. $\CX'$-dimension, and both subcategories satisfy the condition $(*)$ and being quasi-resolving. Then, if there is a triangle  equivalence $\Delta_{\CX}(R)\simeq \Delta_{\CX'}(R')$ of triangulated categories, then there is a triangle equivalence $\D_{\rm{sg}}(R)\simeq \D_{\rm{sg}}(R').$ 
\end{proposition}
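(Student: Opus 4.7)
The plan is to reduce the claim, via the equivalences already established in Sections~2 and 3, to a purely triangulated statement: a triangle equivalence between two categories automatically preserves any subcategory that admits a categorical characterization internal to the triangulated structure. The subcategories we need to preserve are $\K^{\rm{b}}_{\rm{ac}}(\CX)$ and $\K^{\rm{b}}_{\rm{ac}}(\CX')$, and Proposition~\ref{Prop 4.3} will give exactly such a characterization.

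\textbf{Step 1: Rewrite the relative singularity categories as homotopy categories of exact complexes.} Since $R$ has finite global $\CX$-dimension, Theorem~\ref{Theorem 3.3} (or the ``big diagram'' following it) identifies $\Delta_{\CX}(R)$ with the Verdier quotient $\K^{\rm{b}}(\CX)/\K^{\rm{b}}(\rm{prj}\mbox{-}R)$, and Proposition~\ref{proposition 3.7} further identifies this quotient with $\K^{-,\rm{p}}(\CX)$ via the functor $\Psi_{\CX}$. The same applies to $R'$ via $\Psi_{\CX'}$. Composing with the given equivalence $\Delta_{\CX}(R)\simeq\Delta_{\CX'}(R')$, we obtain a triangle equivalence
\[
\Phi\colon \K^{-,\rm{p}}(\CX)\st{\sim}\lrt \K^{-,\rm{p}}(\CX').
\]

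\textbf{Step 2: Show that $\Phi$ restricts to an equivalence on the bounded acyclic subcategories.} This is where condition $(*)$ and the quasi-resolving hypothesis enter. By Proposition~\ref{Prop 4.3}, we have the identifications
\[
\K^{-,\rm{p}}(\CX)_r=\K^{\rm{b}}_{\rm{ac}}(\CX), \qquad \K^{-,\rm{p}}(\CX')_r=\K^{\rm{b}}_{\rm{ac}}(\CX').
\]
The right-hand sides are defined intrinsically in terms of the triangulated structure (an object $X$ lies in $\CT_r$ iff $\Hom_{\CT}(Y,X[i])$ vanishes for all but finitely many $i$, for every $Y\in\CT$). Consequently any triangle equivalence $\Phi$ sends $\K^{-,\rm{p}}(\CX)_r$ onto $\K^{-,\rm{p}}(\CX')_r$, i.e., sends $\K^{\rm{b}}_{\rm{ac}}(\CX)$ onto $\K^{\rm{b}}_{\rm{ac}}(\CX')$. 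Thus $\Phi$ descends to a triangle equivalence between the Verdier quotients
\[
\frac{\K^{-,\rm{p}}(\CX)}{\K^{\rm{b}}_{\rm{ac}}(\CX)}\st{\sim}\lrt \frac{\K^{-,\rm{p}}(\CX')}{\K^{\rm{b}}_{\rm{ac}}(\CX')}.
\]

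\textbf{Step 3: Conclude via Theorem~\ref{Theorem 3.10}.} Applying Theorem~\ref{Theorem 3.10} on both sides (using again that the global $\CX$-, resp. $\CX'$-, dimension is finite) identifies the two Verdier quotients above with $\D_{\rm{sg}}(R)$ and $\D_{\rm{sg}}(R')$, respectively. This yields the desired triangle equivalence $\D_{\rm{sg}}(R)\simeq \D_{\rm{sg}}(R')$.

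The main obstacle is of a conceptual rather than a technical nature: one must argue that $\K^{\rm{b}}_{\rm{ac}}(\CX)$ can be recognized \emph{inside} $\K^{-,\rm{p}}(\CX)$ without appealing to the concrete complex-level data. Proposition~\ref{Prop 4.3} does precisely this, and it is the only place where condition $(*)$ and the quasi-resolving assumption are genuinely used; everything else is a formal consequence of the $2$-out-of-$3$ equivalences from Section~3 combined with the universal property of Verdier localization.
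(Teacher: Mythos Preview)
Your proof is correct and follows essentially the same route as the paper: use the intrinsic characterization $\K^{-,\rm{p}}(\CX)_r=\K^{\rm{b}}_{\rm{ac}}(\CX)$ from Proposition~\ref{Prop 4.3} to see that any triangle equivalence $\Delta_{\CX}(R)\simeq\Delta_{\CX'}(R')$ (identified with $\K^{-,\rm{p}}(\CX)\simeq\K^{-,\rm{p}}(\CX')$) must restrict to the bounded acyclic subcategories, and then invoke Theorem~\ref{Theorem 3.10} on both sides. The only difference is cosmetic: the paper phrases the quotient as $\Delta_{\CX}(R)/\K^{\rm{b}}_{\rm{ac}}(\CX)$ directly, whereas you first pass explicitly through $\K^{-,\rm{p}}(\CX)$ via $\Psi_{\CX}$ before taking the quotient.
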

\begin{proof}
	The statement is a consequence of Proposition \ref{Prop 4.3} and Theorem \ref{Theorem 3.10}. In fact, a triangle equivalence $\Delta_{\CX}(R)\simeq \Delta_{\CX'}(R')$, because of the intrinsic characterization given in Proposition \ref{Prop 4.3}, restrict to a triangle equivalence between $\K^{\rm{b}}_{\rm{ac}}(\CX)\simeq \K^{\rm{b}}_{\rm{ac}}(\CX')$. Thus it induces a triangle equivalence between the associated Verdier quotient categories $\Delta_{\CX}(R)/\K^{\rm{b}}_{\rm{ac}}(\CX)$ and $\Delta_{\CX'}(R')/\K^{\rm{b}}_{\rm{ac}}(\CX')$. By Theorem \ref{Theorem 3.10}, we observe that the Verdier quotient categories are equivalent to the respective singularity categories, so the result.
\end{proof}
In the next we give some examples satisfying the assumptions of the above proposition.
\begin{example}\label{Example 5.5}
	\begin{itemize}
		\item [$(1)$] Let $\La$ be a $\rm{CM}$-finite Gorenstein algebra. Recall that $\La$ is called $\rm{CM}$-finite, if $\rm{Gprj}\mbox{-}\La$ has only
		finitely many isomorphism classes of indecomposable modules. It is known that $\rm{Gprj}\mbox{-}\La$ is closed under kernels of epimorphisms, see e.g. \cite[Proposition 2.1.7]{C4}. Then, in view of  Proposition \ref{Proposition 4.4}, $\rm{Gprj}\mbox{-}\La$ satisfies the all conditions we need in Proposition \ref{Proposition 4.4}.
		\item[$(2)$] Let $\La$ be an Artin algebra with radical square zero, that is $\rm{rad}^2(\La)=0$. Then the additive closure $S(\La)=\rm{add}\mbox{-}(\La\oplus \frac{\La}{\rm{rad}(\La)})$  of projective modules and simple modules  verifies the conditions. To check, only use this fact that  submodules of projective modules are semisimple over Artin algebras with radical square zero. 
		\item[$(3)$] Assume $\La$ is representation-finite. Then, it is obvious that  $\mmod \La$ satisfies the assumptions.	
	\end{itemize}
Specializing Proposition \ref{Proposition 4.4} for the above examples follows:
\begin{corollary}
Let $\La$ and $\La'$ be two Artin algebras. The following statement happen.
\begin{itemize}	
	\item [$(1)$]Assume that $\La$ and $\La'$ are $\rm{CM}$-finite Gorenstein. Then, if there is a triangulated equivalence $\Delta( \rm{Gprj}\mbox{-}\La)\simeq \Delta(\rm{Gprj}\mbox{-}\La')$, then $\D_{\rm{sg}}(\La)\simeq \D_{\rm{sg}}(\La').$
	\item[$(2)$] Assume that $\La$ and $\La$ are algebras with radical squarer zero. Then, if there is a triangulated equivalence $\Delta(S(\La))\simeq \Delta(S(\La'))$, then $\D_{\rm{sg}}(\La)\simeq \D_{\rm{sg}}(\La').$
	\item [$(3)$] Assume that $\La$ and $\La$ are representation-finite. Then, if there is a triangulated equivalence $\Delta(\mmod \La)\simeq \Delta(\mmod \La')$, then $\D_{\rm{sg}}(\La)\simeq \D_{\rm{sg}}(\La').$
	\end{itemize}	
	\end{corollary}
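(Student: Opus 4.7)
The plan is to apply Proposition \ref{Proposition 4.4} directly in each of the three situations. Thus the entire task reduces to verifying, in each case, that the chosen subcategory $\CX$ is (a) contravariantly finite in $\mmod\La$, (b) gives $\La$ finite global $\CX$-dimension, (c) satisfies the condition $(*)$, and (d) is quasi-resolving. Once these four properties are checked, the hypothesized equivalence of relative singularity categories passes through Proposition \ref{Proposition 4.4} to yield the desired triangle equivalence $\D_{\sg}(\La)\simeq \D_{\sg}(\La')$.

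For part $(1)$, I would recall that since $\La$ is Gorenstein, $\rm{Gprj}\mbox{-}\La$ is contravariantly finite in $\mmod\La$ and every module has finite Gorenstein projective dimension, so (a) and (b) hold. Closure under direct summands is standard, closure under kernels of epimorphisms is given by \cite[Proposition 2.1.7]{C4}, and $\rm{prj}\mbox{-}\La\subseteq \rm{Gprj}\mbox{-}\La$; this gives (d). Since $\rm{Gprj}\mbox{-}\La$ is closed under syzygies and is of finite representation type by the $\rm{CM}$-finite hypothesis, Proposition \ref{Proposition 4.3} yields (c).

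For part $(2)$, the subcategory $S(\La)=\add\mbox{-}(\La\oplus\La/\rad(\La))$ is finite, hence contravariantly finite, giving (a), and quasi-resolving: it visibly contains the projectives and is closed under summands, while closure under kernels of epimorphisms follows from the fact that over algebras with $\rad^2(\La)=0$ every submodule of a projective is semisimple. For (b), the same fact shows that every module $M$ fits in a short exact sequence $0\to \Omega_\La(M)\to P\to M\to 0$ with $\Omega_\La(M)$ semisimple, so the $S(\La)$-dimension of every module is at most $1$. Finally, $S(\La)$ being closed under syzygies and of finite representation type yields (c) via Proposition \ref{Proposition 4.3}. For part $(3)$, the subcategory $\CX=\mmod\La$ trivially satisfies (a), (b), and (d), and (c) is again immediate from Proposition \ref{Proposition 4.3} since $\La$ is representation-finite and $\mmod\La$ is obviously closed under syzygies.

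The main obstacle I anticipate is none of these verifications individually, but rather keeping the bookkeeping clean in part $(2)$: one must be careful that closure of $S(\La)$ under kernels of epimorphisms and under syzygies both genuinely follow from the semisimplicity of submodules of projectives over a radical-square-zero algebra, and that the resulting finite global $S(\La)$-dimension is compatible with the hypotheses of Proposition \ref{Proposition 4.4}. After these checks, each of the three statements is an immediate application of Proposition \ref{Proposition 4.4}.
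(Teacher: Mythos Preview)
Your proposal is correct and follows essentially the same approach as the paper: the corollary is obtained by specializing Proposition~\ref{Proposition 4.4} to the three situations, after checking that the relevant subcategories are contravariantly finite, quasi-resolving, of finite global $\CX$-dimension, and satisfy condition~$(*)$ (the last via Proposition~\ref{Proposition 4.3}). This is exactly what the paper does in Example~\ref{Example 5.5}, and the corollary is stated there with no further argument beyond ``Specializing Proposition~\ref{Proposition 4.4} for the above examples''.
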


\end{example}
 In concern of the second observation given in the beginning of this section we establish the following construction.
 \begin{construction}\label{Costruction 2}
 	Assume that $ \rm{prj}\mbox{-}R \subseteq \CX \subseteq \mmod R$ and $\rm{prj}\mbox{-}R' \subseteq\CX' \subseteq \mmod R' $ are contravariantly finite subcategories in the corresponding ambient categories and closed under syzygies. Moreover, there is an equivalence $\underline{F}:\underline{\CX}\rt \underline{\CX}'$ which is induced from a functor $F:\CX\rt \CX'$ preserving projective modules and syzygies, i.e., for any $X \in \CX$, $F(\Omega_R(X))\simeq \Omega_{R'}(F(X))$ in $\underline{\CX}'.$
Since  the functor $F$ is additive then it induces the triangle functor $\K^{\rm{b}} F$ between the bounded  homotopy categories $\K^{\rm{b}}(\CX)$ and $\K^{\rm{b}}(\CX')$. On the other hand,  $F$ preserves projective modules then the  functor $\K^{\rm{b}}F$ preserves the bounded  homotopy category of projective modules as well. Hence $\K^{\rm{b}}F$ induces a triangle functor $ \widetilde{\K^{\rm{b}}F}: \frac{\K^{\rm{b}}(\CX)}{\K^{\rm{b}}(\rm{prj}\mbox{-}R)} \rt  \frac{\K^{\rm{b}}(\CX')}{\K^{\rm{b}}(\rm{prj}\mbox{-}R')}$. Denote by $\K^{\rm{p}}F:=\Psi_{\CX'}\circ \widetilde{\K^{\rm{b}}F} \circ \Psi^{-1}_{\CX}$, see the beginning of Section 3 for the definitions of $\Psi_{\CX}$ and $\Psi_{\CX'}$. 
The  introduced  functors satisfy the following commutative diagram  
 \begin{equation*}
 \begin{array}{c}
 \begin{xy}\SelectTips{cm}{}
 \xymatrix{ 	
 	\underline{\CX} \ar@{^{(}->}[r]\ar[d]^{\underline{F}} & \frac{\displaystyle \K^{\rm{b}}(\CX)}{\displaystyle \K^{\rm{b}}(\rm{prj}\mbox{-}R)} \ar[r]^{\Psi_{\CX}}\ar[d]^{\widetilde{\K^{\rm{b}}F}}  &  \K^{-, \rm{p}}(\CX)\ar[d]^{\K^{\rm{p}}F} \\
 	\underline{\CX}'\ar@{^{(}->}[r]	&\frac{\displaystyle \K^{\rm{b}}(\CX')}{\displaystyle \K^{\rm{b}}(\rm{prj}\mbox{-}R')}  \ar[r]^{\Psi_{\CX'}} &
 	\K^{-, \rm{p}}(\CX').
 }\end{xy}
 \end{array}
 \end{equation*}
 Assume, further, $\CX$ and $\CX'$ are quasi-resolving, and the functor $F$ is exact, meaning that any short exact sequence with terms in $\CX$ is mapped by $F$ into a short exact sequence in $\CX'.$ By applying terms by terms the functor $F$ and using its exactness,  we can extend the functor $F$ to $\K^{\rm{p}}_{\rm{e}}F:\K^{-, \rm{p}}(\CX)\rt \K^{-, \rm{p}}(\CX')$. By the construction, we clearly have the following commutative diagram 
 
  \begin{equation*}
 \begin{array}{c}
 \begin{xy}\SelectTips{cm}{}
 \xymatrix{ 	
 	\K^{\rm{b}}_{\rm{ac}}(\CX) \ar@{^{(}->}[r]\ar[d]^{\K^{\rm{p}}_{\rm{e}}F\mid} &\K^{-, \rm{p}}(\CX) \ar[d]^{\K^{\rm{p}}_{\rm{e}}F}   \\
 	\K^{\rm{b}}_{\rm{ac}}(\CX)\ar@{^{(}->}[r]	&\K^{-, \rm{p}}(\CX') .}\end{xy}
 \end{array}
 \end{equation*}
 Thus because  of the above diagram $\K^{\rm{p}}_{\rm{e}}F$ induces a triangle functor between the corresponding Verdier  quotient categories  $\frac{\displaystyle \mathbb{K}^{-, \rm{p}}(\CX)}{ \displaystyle \mathbb{K}^{\rm{b}}_{\rm{ac}}(\CX)}$ and $ \frac{\displaystyle \mathbb{K}^{-, \rm{p}}(\CX')}{ \displaystyle \mathbb{K}^{\rm{b}}_{\rm{ac}}(\CX')}$. Now by considering Theorem \ref{Theorem 3.10}, we obtain a triangle functor from $\D_{\rm{sg}}(R)$ to $\D_{\rm{sg}}(R')$, assuming the global $\CX$-dimension, resp. $\CX'$-dimension,  of $R$, resp. $R',$ is finite. To have the functor $\K^{\rm{p}}_{\rm{e}}F$, we can drop the assumption to be closed under kernels of epimorphisms (being quasi-resolving) for the involved subcategories; set another assumption on $F$. That is $F$ to be a restriction of an exact functor from a quasi-resolving subcategory $\CY \subseteq \mmod R$ containing $\CX$ to a quasi-resolving subcategory $\CY' \subseteq \mmod R'$ containing $\CX'.$  As a special case, when $F$ is a restriction of an exact functor from $\mmod R$ to $\mmod R'$. In this case no need to assume that  the functor $F$ to preserve the syzygies, since the exactness of the functor automatically implies it.
  \end{construction}
 \begin{proposition}\label{Proposition 4.9}
 	Keep in mind all the notations in  the above construction. Then the following statements hold.
 	\begin{itemize}
 	\item [$(1)$] The functor $\K^{\rm{p}}F$ is a triangle equivalence.
 	\item [$(2)$] Assume $\CX$ and $\CX'$ are quasi-resolving. Then, $\K^{\rm{p}}_{\rm{e}}F$ is a triangle equivalence. 
 	 \item[$(3)$] Assume $F$ is a restriction of an exact functor between  quasi-resolving subcategories  $\CX \subseteq \CY \subseteq \mmod R$ and $\CX' \subseteq \CY' \subseteq \mmod R'$. Then, $\K^{\rm{p}}_{\rm{e}}F$ is a triangle equivalence. In this situation we still use the notation $\K^{\rm{p}}_{\rm{e}}F$, it is indeed the restricted functor $\K^{\rm{p}}_{\rm{e}}G\mid: \K^{-, \rm{p}}(\CX)\rt \K^{-, \rm{p}}(\CX')$, where $G$ is the exact functor between $\CY$ and $\CY'$.
 	 \end{itemize}
 \end{proposition}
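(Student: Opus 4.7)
The plan is to prove all three parts by first establishing (1) directly, and then reducing (2) and (3) to (1) by showing that $\K^{\rm{p}}_{\rm{e}}F$ is naturally isomorphic to $\K^{\rm{p}}F$ on the generators of $\K^{-,\rm{p}}(\CX)$ (those of the form $\mathbf{C}_X$, in view of Remark \ref{remark1}).

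For part (1), since $\Psi_{\CX}$ and $\Psi_{\CX'}$ are triangle equivalences by Proposition \ref{proposition 3.7}, it suffices to show that the induced functor $\widetilde{\K^{\rm{b}}F}$ between the Verdier quotients $\frac{\K^{\rm{b}}(\CX)}{\K^{\rm{b}}(\rm{prj}\mbox{-}R)}$ and $\frac{\K^{\rm{b}}(\CX')}{\K^{\rm{b}}(\rm{prj}\mbox{-}R')}$ is a triangle equivalence. For density, I will use the canonical embedding $\underline{\CX'}\hookrightarrow \frac{\K^{\rm{b}}(\CX')}{\K^{\rm{b}}(\rm{prj}\mbox{-}R')}$ from \cite{CC}: since $\underline{F}$ is essentially surjective, every $X'\in \CX'$ is isomorphic in $\underline{\CX'}$ (hence in the quotient) to $F(X)$ for some $X\in \CX$; as the target is generated as a triangulated category by the image of $\CX'$, essential surjectivity follows. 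For full faithfulness, I will use the Hom-space calculation carried out in the Hom-finiteness proposition of Section~3 (which is exactly the one hidden in the proof of Proposition \ref{proposition 3.7}): via $\Psi_{\CX}$ the morphism space $\Hom(X,Y[i])$ in $\frac{\K^{\rm{b}}(\CX)}{\K^{\rm{b}}(\rm{prj}\mbox{-}R)}$ vanishes for $i>0$, equals $\underline{\Hom}_R(X,Y)$ for $i=0$, and equals $\underline{\Hom}_R(X,\Omega_R^{-i}(Y))$ for $i<0$, and similarly on the target with $F(X), F(Y)$. Since $F$ preserves syzygies up to stable isomorphism, $\Omega_{R'}^{-i}(F(Y))\simeq F(\Omega_R^{-i}(Y))$ in $\underline{\CX'}$, and since $\underline{F}$ is fully faithful, the induced map on Hom-spaces is a bijection on all generators $X,Y\in\CX$ and all $i\in\Z$. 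A routine devissage extends the bijection to arbitrary objects of $\frac{\K^{\rm{b}}(\CX)}{\K^{\rm{b}}(\rm{prj}\mbox{-}R)}$.

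For part (2), the claim is that $\K^{\rm{p}}_{\rm{e}}F$ is naturally isomorphic to $\K^{\rm{p}}F$, from which the equivalence property follows by (1). Both are triangle functors, so by Remark \ref{remark1} it is enough to compare them on objects of the form $\mathbf{C}_X$ for $X\in\CX$. By exactness of $F$ and the fact that $F(\rm{prj}\mbox{-}R)\subseteq \rm{prj}\mbox{-}R'$, the complex $F(\mathbf{C}_X)$ is exact with projective tail augmenting to $F(X)$, hence is canonically homotopy-equivalent to $\mathbf{C}_{F(X)}$; by the construction of $\K^{\rm{p}}F$ this is exactly $\K^{\rm{p}}F(\mathbf{C}_X)$. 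Naturality of the isomorphism is a consequence of the lifting property of projective resolutions together with the triangle axioms applied to the defining triangles of $\mathbf{C}_X$ and $\mathbf{C}_{F(X)}$.

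For part (3), the construction of $\K^{\rm{p}}_{\rm{e}}G\vert$ makes sense because $G$ restricts to $F$ on $\CX$, $G$ preserves projectives, and $G$ is exact on short exact sequences in $\CY$; this forces $G(\mathbf{C}_X)$ to be exact for every $X\in\CX$, with projective tail, so the image lies in $\K^{-,\rm{p}}(\CX')$. Exactly as in (2), $G(\mathbf{C}_X)$ is a projective resolution of $F(X)$, hence homotopy-equivalent to $\mathbf{C}_{F(X)}$, and the same triangulated devissage produces a natural isomorphism with $\K^{\rm{p}}F$, giving the equivalence. The main obstacle is the careful bookkeeping in the Hom-space computation in part (1): one must verify that $\widetilde{\K^{\rm{b}}F}$ acts on Hom-spaces, presented as right roofs, via the $k$-linear map $\underline{F}$ composed with the stable isomorphism $F(\Omega_R^{-i}(Y))\simeq \Omega_{R'}^{-i}(F(Y))$, and this bookkeeping is what ultimately transfers the equivalence $\underline{F}$ to an equivalence of the much larger quotient categories.
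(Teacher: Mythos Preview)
Your argument for part~(1) is essentially the paper's: you work on the Verdier--quotient side with $\widetilde{\K^{\rm b}F}$ while the paper works on the $\K^{-,\rm p}$ side with $\K^{\rm p}F$, but since $\K^{\rm p}F=\Psi_{\CX'}\circ\widetilde{\K^{\rm b}F}\circ\Psi_{\CX}^{-1}$ by definition these are equivalent, and the Hom--space computation (vanishing for positive shift, $\underline{\Hom}_R(X,Y)$ in degree~$0$, $\underline{\Hom}_R(X,\Omega^{-i}_R(Y))$ for negative shift) together with the density via $\underline{F}$ is exactly the paper's computation, only reindexed.

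For parts~(2) and~(3) you diverge from the paper. The paper simply says ``same argument'': one checks directly that $\K^{\rm p}_{\rm e}F$ sends $\mathbf{C}_X$ to a complex homotopy--equivalent to $\mathbf{C}_{F(X)}$ and then reruns the Hom--space computation of~(1) for this functor. You instead try to reduce (2) and~(3) to~(1) by asserting a natural isomorphism $\K^{\rm p}_{\rm e}F\simeq\K^{\rm p}F$ of triangle functors. This is where there is a genuine gap: your sentence ``both are triangle functors, so by Remark~\ref{remark1} it is enough to compare them on objects of the form $\mathbf{C}_X$'' is not a valid step. Two triangle functors that agree up to isomorphism on a generating set need not be naturally isomorphic; one must first \emph{produce} a natural transformation between them and then check it is an isomorphism on generators. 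Your appeal to ``the lifting property of projective resolutions together with the triangle axioms'' does not supply such a transformation. (It can be done---using that termwise $F$ takes the defining triangle $\mathbf{P}_{\mathbf{X}}\to\mathbf{X}\to\Phi(\mathbf{X})\to\mathbf{P}_{\mathbf{X}}[1]$ to a triangle of the same shape, so that $F\circ\Phi\simeq\Phi'\circ F$ naturally---but you have not written this.) The paper's ``same argument'' route sidesteps the issue entirely: since you already know $\K^{\rm p}_{\rm e}F(\mathbf{C}_X)\simeq\mathbf{C}_{F(X)}$, you can feed this directly into the generator--level full--faithfulness and density check of~(1), without ever comparing $\K^{\rm p}_{\rm e}F$ to $\K^{\rm p}F$ as functors.
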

\begin{proof}
 Since the subcategory $\mathcal{C}$ consisting   of all $\mathbf{C}_X[i]$ for all $X \in \CX$ and $i \in \Z$ generates $\K^{-, \rm{p}}(\CX)$, see Remark \ref{remark1}, then, by using a standard argument, it is enough to show that the restriction of  $\K^{\rm{p}}F$ on $\mathcal{C}$ is an equivalence. Note that by the definition the image of $\mathcal{C}$ under $\K^{\rm{p}}F$ is mapped into the subcategory  $\mathcal{C}'$ in $\K^{-, \rm{p}}(\CX')$ which is defined similar to $\mathcal{C}.$ Let $\mathbf{C}_{X'}[i]$, for some $i \in \Z$ and $X \in \CX',$ be in $\mathcal{C}'$. Due to the equivalence $\underline{F}$ there is a module $X$ in $\CX$ with $\underline{F}(X)\simeq X'$. Then by the uniqueness of projective resolution up to homotopy equivalence we can see that $\K^{\rm{p}}F(\mathbf{C}_X[i])= \mathbf{C}_{\underline{F}(X)}[i]\simeq \mathbf{C}_{X'}[i]$. So this proves the denseness of the restricted functor. For any pair of modules $A$ and $B$ in $\CX$ and $i \in \Z,$  we have the following natural  isomorphisms
	\begin{itemize} 
		\item [$(1)$]If $i=0,$  \begin{align*}	
		 \Hom_{\K(R)}(\mathbf{C}_A, \mathbf{C}_B)&\simeq  \underline{\rm{Hom}}_{R}(A, B)\\ &\simeq \underline{\rm{Hom}}_{R'}(F(A), F(B))\\ &\simeq \Hom_{\K(R')}(\mathbf{C}_{F(A)}, \mathbf{C}_{F(B)}) \\ &= \Hom_{\K(R')}(\K^{\rm{p}}F(\mathbf{C}_A), \K^{\rm{p}}F(\mathbf{C}_B))  \end{align*}
		\item [$(2)$] If $i>0$, then \begin{align*}	
		 \Hom_{\K(R)}(\mathbf{C}_A[i], \mathbf{C}_B)&\simeq \underline{\rm{Hom}}_{R}(A, \Omega^i_R(B)) \\ &\simeq \underline{\rm{Hom}}_{R'}(F(A), F(\Omega^i_R(B))) \\ &\simeq
		 \underline{\rm{Hom}}_{R'}(F(A), \Omega^i_{R'}(F(B))) \\& \simeq 
		  \Hom_{\K(R')}(\mathbf{C}_{F(A)}[i], \mathbf{C}_{F(B)})\\ &= \Hom_{\K(R')}(\K^{\rm{p}}F(\mathbf{C}_A[i]), \K^{\rm{p}}F(\mathbf{C}_B)) \end{align*}

		\item [$(2)$] If $i <0$, then $$\Hom_{\K(R)}(\mathbf{C}_A[i], \mathbf{C}_B)\simeq 0 \simeq  \Hom_{\K(R')}(\mathbf{C}_{F(A)}[i], \mathbf{C}_{F(B)})=\Hom_{\K(R')}(\K^{\rm{p}}F(\mathbf{C}_A[i]), \K^{\rm{p}}F(\mathbf{C}_B)).$$	
	\end{itemize}
The above isomorphisms show that the restriction of $\K^{\rm{p}}F$ on $\mathcal{C}$ is full and faithful. So we are done. The statements $(2)$ and $(3)$ can be proved by the same argument. So we skip their proofs.
\end{proof}
\begin{theorem}\label{Coroallry 5.7}
	Let $\CX \subseteq \mmod R$ and $\CX' \subseteq \mmod R'$ be as in Construction \ref{Costruction 2}, that is,  $ \rm{prj}\mbox{-}R \subseteq \CX \subseteq \mmod R$ and $\rm{prj}\mbox{-}R' \subseteq \CX' \subseteq \mmod R' $ are contravariantly finite  and closed under syzygies. Assume, further, the global $\CX$-dimension, resp. $\CX'$-dimension,  of $R$, resp. $R'$, is finite. Suppose there is a functor $F:\CX\rt \CX'$ such that $F(\rm{prj}\mbox{-}R) \subseteq \rm{prj}\mbox{-}R'$, the induced functor $\underline{F}:\underline{\CX}\rt \underline{\CX}'$ is an equivalence, and  for any $X \in \CX$, $F(\Omega_R(X))\simeq \Omega_{R'}(F(X))$ in $\underline{\CX}'$ (as in Construction \ref{Costruction 2}). If either of the following situations happens.
	\begin{itemize}
\item [$(1)$]	
The subcategories  $\CX$ and $\CX'$ satisfy the condition $(*)$. 
	 \item [$(2)$] The subcategories  $\CX$ and $\CX'$ are quasi-resolving, and the functor $F$ is exact. 
	 \item [$(3)$] The functor $F$ is a restriction of an exact functor between  quasi-resolving subcategories  $\CX \subseteq \CY \subseteq \mmod R$ and $\CX' \subseteq \CY' \subseteq \mmod R'$.
	 \end{itemize} 
 Then, $R$ and $R'$ are singularly equivalent.
\end{theorem}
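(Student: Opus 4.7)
The plan is to reduce the theorem to the machinery already assembled in Construction~\ref{Costruction 2}, Proposition~\ref{Proposition 4.9}, Proposition~\ref{Prop 4.3}, and Theorem~\ref{Theorem 3.10}. In each of the three situations the strategy is the same: produce a triangle equivalence at the level of $\K^{-,\rm{p}}$, verify that it matches the thick subcategories $\K^{\rm{b}}_{\rm{ac}}$ on the two sides, and then pass to the Verdier quotient, which Theorem~\ref{Theorem 3.10} identifies with the singularity category.

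First, feed the data $(F,\underline{F})$ into Construction~\ref{Costruction 2}. In situation $(1)$ this produces the triangle functor $\K^{\rm{p}}F\colon \K^{-,\rm{p}}(\CX)\rt \K^{-,\rm{p}}(\CX')$, whereas in situations $(2)$ and $(3)$ it produces the termwise functor $\K^{\rm{p}}_{\rm{e}}F$; both fit into the commutative diagrams of Construction~\ref{Costruction 2}. Proposition~\ref{Proposition 4.9} upgrades each of these to a triangle equivalence, the Hom-computation on the generators $\mathbf{C}_X$ in that proof using exactly the hypotheses that $\underline{F}$ is an equivalence and that $F$ commutes with syzygies in $\underline{\CX}'$.

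The next step is to show that the equivalence, call it $H$, identifies the thick subcategory $\K^{\rm{b}}_{\rm{ac}}(\CX)$ with $\K^{\rm{b}}_{\rm{ac}}(\CX')$. In situation $(1)$, the condition $(*)$ on both sides, together with the closed-under-syzygies (and quasi-resolving) hypothesis, lets Proposition~\ref{Prop 4.3} supply the intrinsic categorical description $\K^{\rm{b}}_{\rm{ac}}(\CX)=\K^{-,\rm{p}}(\CX)_r$ (and likewise on the primed side); since the subcategory of right homologically finite objects is preserved by any triangle equivalence, $H$ restricts as required. In situations $(2)$ and $(3)$, the termwise description $H=\K^{\rm{p}}_{\rm{e}}F$, together with the exactness of $F$ (on $\CX$, or on the ambient $\CY$), directly gives the restriction $\K^{\rm{b}}_{\rm{ac}}(\CX)\rt \K^{\rm{b}}_{\rm{ac}}(\CX')$ recorded in the second diagram of Construction~\ref{Costruction 2}. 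The main obstacle is then essential surjectivity of this restriction; I would handle it by combining essential surjectivity of $H$ at the level of $\K^{-,\rm{p}}$ with a degree-by-degree lift of a bounded exact complex over $\CX'$, using the quasi-inverse of $\underline{F}$ on components and the exactness of $F$ to verify exactness of the lift.

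With the thick subcategory $\K^{\rm{b}}_{\rm{ac}}$ identified across $H$, the universal property of the Verdier quotient yields a triangle equivalence
\[
\frac{\K^{-,\rm{p}}(\CX)}{\K^{\rm{b}}_{\rm{ac}}(\CX)}\ \simeq\ \frac{\K^{-,\rm{p}}(\CX')}{\K^{\rm{b}}_{\rm{ac}}(\CX')},
\]
and Theorem~\ref{Theorem 3.10} identifies each side with the respective singularity category, giving the required equivalence $\D_{\rm{sg}}(R)\simeq \D_{\rm{sg}}(R')$.
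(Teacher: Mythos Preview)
Your plan coincides with the paper's proof: in each case you build the equivalence at the level of $\K^{-,\rm p}$ via Construction~\ref{Costruction 2} and Proposition~\ref{Proposition 4.9}, check that it identifies the thick subcategories $\K^{\rm b}_{\rm ac}$, and then quote Theorem~\ref{Theorem 3.10}. For case~$(1)$ your appeal to Proposition~\ref{Prop 4.3} is exactly what the paper packages as Proposition~\ref{Proposition 4.4}.

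The one place you diverge is that you single out essential surjectivity of the restriction $\K^{\rm p}_{\rm e}F\colon \K^{\rm b}_{\rm ac}(\CX)\to \K^{\rm b}_{\rm ac}(\CX')$ in cases~$(2)$ and~$(3)$ as the nontrivial step; the paper simply asserts the restriction is an equivalence ``by the construction.'' You are right that this needs an argument, but your proposed fix does not work as written: lifting a bounded exact complex over $\CX'$ termwise through the \emph{stable} quasi-inverse of $\underline F$ does not produce a complex (the components are only determined up to projective summands and there is no canonical choice of differentials), and exactness of $F$ tells you images of exact sequences are exact, not that preimages of exact complexes are exact. So you have correctly located a gap---one the paper glosses over as well---without actually closing it; a workable route would be to show that $\K^{\rm p}_{\rm e}F$ \emph{reflects} membership in $\K^{\rm b}_{\rm ac}$, or to exhibit a quasi-inverse of $\K^{\rm p}_{\rm e}F$ that is itself termwise exact.
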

\begin{proof}
	For $(1)$, we know by Proposition \ref{Proposition 4.9} the functor $\K^{\rm{p}}F$, defined in the construction, is an equivalence between the relative singularity categories $\Delta_{\CX}(R)\simeq \Delta_{\CX'}(R')$. Thanks to Proposition \ref{Proposition 4.4} we get the result since  the subcategories  satisfy the condition $(*)$. The proofs of  $(2)$ and $(3)$ come from this fact that the triangle equivalence  $\K^{\rm{p}}_{\rm{e}}F$ is restricted to $\K^{\rm{b}}_{\rm{ac}}(\CX)\simeq \K^{\rm{b}}_{\rm{ac}}(\CX')$, by the construction. Hence $\K^{\rm{p}}_{\rm{e}}F$ induces similarly an equivalence between the singularity categories. So we are done these two cases. 
\end{proof}
As the above theorem says that some kinds of relative stable equivalences implies the singular equivalences. An interesting cases of $\CX$ and $\CX'$ is when both equal to the whole of the module categories. In this special case we are dealing with   the stable equivalences which have been worked by many researchers. 

 \begin{corollary}\label{Corolalry 5.10}
 	Let $R$ and $R'$ be two right notherian rings. Let $F:\mmod R \rt \mmod R'$ be a functor with $F(\rm{prj}\mbox{-}R) \subseteq \rm{prj}\mbox{-}R'.$ Suppose that $F$
 induces the stable equivalence $\underline{F}:\underline{\rm{mod}}\mbox{-}R\rt \underline{\rm{mod}}\mbox{-}R'$ with  the property $\underline{F}(\Omega_R(M))\simeq \Omega_{R'}(\underline{F}(M))$ for any $R$-module $M$ in $\mmod R$. If one of the two following assumptions is correct.
 \begin{itemize}
 	\item [$(1)$] The rings $R$ and $R'$ are two Artin algebras of finite representation type.
 	\item [$(2)$] The functor $F$ is exact
 \end{itemize}
   Then, $R$ and $R'$ are singularly equivalent .
 \end{corollary}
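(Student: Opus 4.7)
The plan is to derive this corollary as the special case of Theorem \ref{Coroallry 5.7} in which the two ambient subcategories are taken to be the entire finitely generated module categories, i.e.\ $\CX=\mmod R$ and $\CX'=\mmod R'$. Before splitting into cases I would verify that these choices satisfy the blanket hypotheses of that theorem: they tautologically contain $\rm{prj}\mbox{-}R$ and $\rm{prj}\mbox{-}R'$, they are contravariantly finite (the identity is a right approximation), and they are closed under syzygies. The global $\CX$-dimension and $\CX'$-dimension are zero (every module already lies in the ambient category), so the finiteness hypothesis on the global relative dimension holds trivially. Furthermore, the hypotheses on $F$ in the statement of the corollary—preservation of projectives, the induced stable equivalence $\underline{F}$, and the compatibility $\underline{F}(\Omega_R(M))\simeq \Omega_{R'}(\underline{F}(M))$—are precisely the data required by Construction \ref{Costruction 2} to set up the functor $\K^{\rm{p}}F$.

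For part $(1)$, with $R$ and $R'$ of finite representation type the subcategories $\mmod R$ and $\mmod R'$ are themselves of finite representation type and, as already noted, closed under syzygies. Proposition \ref{Proposition 4.3} then shows that both satisfy condition $(*)$, so clause $(1)$ of Theorem \ref{Coroallry 5.7} applies and produces a triangle equivalence $\D_{\rm{sg}}(R)\simeq \D_{\rm{sg}}(R')$.

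For part $(2)$, the subcategories $\mmod R$ and $\mmod R'$ are obviously quasi-resolving (they contain all projectives, are closed under direct summands, and are closed under kernels of epimorphisms, being the full module categories), and $F$ is exact by hypothesis; therefore clause $(2)$ of Theorem \ref{Coroallry 5.7} applies and again yields $\D_{\rm{sg}}(R)\simeq \D_{\rm{sg}}(R')$.

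There is essentially no hard step here: the corollary is a direct specialization of the main theorem, and the only thing to be careful about is matching the verbal hypotheses of the corollary with the categorical hypotheses of Theorem \ref{Coroallry 5.7} and Construction \ref{Costruction 2}. The only mild subtlety is that in the exact case no separate assumption of syzygy-compatibility is needed, since exactness of $F$ together with preservation of projectives forces $F(\Omega_R(M))\simeq \Omega_{R'}(F(M))$ stably; this is already pointed out at the end of Construction \ref{Costruction 2}, so the hypotheses of the corollary are in fact a shade stronger than required in case $(2)$.
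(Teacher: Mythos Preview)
Your proposal is correct and follows exactly the paper's approach: specialize Theorem~\ref{Coroallry 5.7} to $\CX=\mmod R$ and $\CX'=\mmod R'$, noting that the global relative dimension is trivially finite. The paper's proof is a two-line sketch, whereas you have simply filled in the routine verifications (invoking Proposition~\ref{Proposition 4.3} for case~$(1)$ and the quasi-resolving property for case~$(2)$) that the paper leaves implicit.
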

\begin{proof}
	It is clear that the relative dimension respect to the whole of module category is always finite. Then, the statement is an immediate consequence of Theorem \ref{Coroallry 5.7}.
	\end{proof}

In the next section we will provide different examples of the subcategories satisfying the assumptions of our  main theorem (Theorem \ref{Coroallry 5.7}). Let us here only focus on the case that the subcategories  to be the whole of the  module categories (as the above corollary) and conclude this section by some remarks concerning this special case.

\begin{remark}\label{lastremark}
\begin{itemize}
\item [$(1)$]
In the case $(2)$ of Corollary \ref{Corolalry 5.10}, we are dealing with the  stable equivalences  induced by exact functors.  We refer to \cite{L} for more information related to  such  stable equivalences. Note that in the  contrast of \cite{L} we do not need the quasi-inverse of  the functor $\underline{F}$ (appeared in  Corollary \ref{Corolalry 5.10})  also induced by an exact functor. An important example of the stable equivalences induced by exact functors is stable equivalences of Morita type. There is  a series of works due to Yuming Liu and Changchang Xi \cite{LX1, LX2, LX3} to give some ways to construct this sort of the stable equivalences and which of  properties are preserved. Recall two algebras $\La$ and $\La'$ are said to be {\it stable equivalence of Morita type} if there exist a pair of bimodules ${}_{\La}M_{\La'}$ and ${}_{\La'}N_{\La}$ such that
\begin{itemize}
	\item [$(1)$] $M$ and $N$ are projective as left and right modules, respectively;
	\item[$(2)$]  $M\otimes_{\La'}N\simeq A\oplus P$   as $A\mbox{-}A$-bimodules for some projective $A\mbox{-}A$-bimodule $P$, 	and $N \otimes_{\La}M\simeq B\oplus Q $  as $B\mbox{-}B$-bimodules for some projective $B\mbox{-}B$-bimodule $Q$.
\end{itemize}
By the definition one can prove directly a stable equivalence of Morita type gives also a singular equivalence. The notion of stable equivalences of Morita type is generalized to {\it singular equivalence of Morita type} in \cite{CS} and also worked in \cite{ZZ}.  Corollary \ref{Corolalry 5.10}(2) explains how one can get a singular equivalence from a certain given stable equivalence, as happens for the stable equivalences of Morita type. Moreover, the above corollary also can be obtained by using the stabilization theory established by Keller-Vossieck, Beligiannis \cite{B, KV}. We also refer the reader to \cite{C6}  for a nice survey of this theory. For an exact  functor $F:\mmod R\rt \mmod R'$ as in Corollary \ref{Corolalry 5.10}(2), the induced functor $\underline{F}$ is indeed a (left) triangle functor between the  left triangulated categories $\underline{\rm{mod}}\mbox{-}R$ and $\underline{\rm{mod}}\mbox{-}R'$. But, by \cite[Corolary 2.7]{C2} there is a triangle equivalence between the  relevant stabilization $\CS(\underline{\rm{mod}}\mbox{-}R)\simeq \CS(\underline{\rm{mod}}\mbox{-}R').$ We know from the basic result due to \cite{KV} the triangle equivalences $\CS(\underline{\rm{mod}}\mbox{-}R)\simeq \D_{\rm{sg}}(R)$ and $\CS(\underline{\rm{mod}}\mbox{-}R')\simeq \D_{\rm{sg}}(R')$, so we get  the corollary (with assumption of exactnesses of the functor $F$). This observation was mentioned to us by Xiao-Wu Chen. We would like to thank him. In case that the functor $F$ is not exact it seems that the stabilization theory does not work.
\item [$(2)$] It is not true any stable equivalence between two rings gives an singular equivalence between them. For instance, it is known any algebra with radical square zero is stable  equivalent to a hereditary algebra (\cite[Theorem 2.1]{AR}). But the singularity categories of hereditary algebras is always trivial, but not to be trivial in general for radical square zero algebras.
\item [$(3)$] Let $\La=k[x]/(x^2)$ be the algebra of dual numbers. The assignment $M \mapsto (M, \rm{soc}(M))$ gives a functor $F: \mmod \La \rt \mmod \La \times \mmod k.$ This functor satisfies the required assumption in Corollary \ref{Corolalry 5.10}(1), but not necessarily $F$ to be exact. 
\end{itemize}

\end{remark}

\section{ consequences and examples}
We aim in this section to give some application of our results. In different types of rings, including path rings, triangular matrix rings, trivial extension rings and tensor rings, by use of Theorem \ref{Theorem 3.3}, a new description of their singularity categories is given. Then by help of such description we construct some singular equivalences.
\subsection{Path rings }\label{Subsection 3.1}
Throughout let $\CQ$ be a finite acyclic  quiver $\CQ=(\CQ_0, \CQ_1, s, t)$, where $\CQ_0$ and $\CQ_1$ are the set of arrows and vertices of $\CQ$, respectively, and $s$ and $t$ are the starting and ending maps from $\CQ_1$ to $\CQ_0,$ respectively. Assume that a  right notherian ring $R$ is given, a representation $X$ of $\CQ$ over  $ R$ is obtained by associating to any vertex $v$ a module $X_v$ in $\mmod R$ and to any arrow $a:v \rt w$ a morphism $X_a: X_v \rt X_w$ in $\mmod R.$ If $\CX$ and $\CY$ are two representations of $\CQ$, then  a morphism $f:X \rt Y$ is determined by a family $\{f_v\}_{v \in \CQ_0}$  so that for any arrow $a:v \rt w$, the commutativity condition $Y_a \circ f_v=f_w \circ X_a$ holds. The representations of $\CQ$ over $\mmod R$ and the  morphisms between them, as defined already,  form an abelian  category which is denoted by $\rm{rep}(\CQ, R).$ We can also construct  the path ring of $\CQ$ by $R$ as follows. Let $\rho$ be the set of all path in the given quiver $\CQ$ together with  the trivial paths associated to the vertices. We write the conjunction of paths from left to right. Now let $R \CQ$ be the free $R$-module with basics $\rho.$ An element of $ R \CQ$ is written as a finite sum $\sum_{p \in \rho}a_P p$, where $a_p \in R$ and $a_{\rho}=0$ for all but finitely many $\rho.$ We can make $R\CQ$ a ring where  multiplication is given by  concatenation of paths. Then $R \CQ$ is still a right notherian ring, and as it is a free module of finite rank over the right notherian ring $R$.  One can prove in the same way for the case $R$ to be  a filed $k$ as in the literature, see for example \cite{ARS}, the category $\mmod R\CQ$ of (right) finitely generated  $R\CQ$-modules is equivalent to the category $\rm{rep}(\CQ, R)$ of representations of $\CQ$ by $R$-modules and $R$-homormorphisms. Hence due to this equivalence we identify $\mmod R\CQ$ with  $\rm{rep}(\CQ, R)$ which is much easier to work. \\
For any vertex $v$ in $V$ of quiver $\CQ$, let $e^v:\rm{rep}(\CQ, R)\rt \mmod R$ be the evaluation functor defined by $e^v(X)=X_v,$ for a representation $X$. It is shown  in \cite{EH} that $e^v$ has a full faithful left adjoint $e^v_{\lambda
}:\mmod R \rt \rm{rep}(\CQ, R)$, by sending $M$ in $\mmod R$ to the representation $e^v_{\la}(M)$, that is, for a vertex $w \in V$, $e^v_{\lambda}(M)(w)=\oplus_{Q(v, w)}M$, where $Q(v, w)$ is the set of all paths from $v$ to $w,$ for an arrow $a:w_1\rt w_2$, $e^v_{\la}(M)(a)$ is the natural injection from
$\oplus_{Q(v, w_1)}M$ to $\oplus_{Q(v, w_2)}M$. As mentioned in \cite[subsection 3.1]{EHS} there is a short exact sequence 
$$\dagger \ \ \ \ \  \ \ \ 0 \rt \bigoplus_{a \in E} e^{t(a)}_{\lambda}(X_{s(a)})\st{g_X}\rt  \bigoplus_{v \in V} e^{v}_{\lambda}(X_v) \st{f_X} \rt X \rt 0.$$
The above short exact sequence is vital for our next observation.\\
Let $\CM_{R}(\CQ)$ be the subcategory of $\rm{rep}(\CQ, R)$ consisting of all representations being isomorphic to a  finite direct sum of  representations in the form of $e^v_{\lambda}(M)$ for some $v$ in $V$ and $M$ in $\mmod R.$ By the classification of projective representations given in \cite[Theorem 3.1]{EE} we observe $\CM_{R}(\CQ)$ contains projective representations. For instance, in the case that $\CQ$ is the quiver  $A_2:1\rt 2$, then the $\CM_R(A_2)$ is formed of all representations which are isomorphic to a finite direct sum of representations in the form of either $0\rt M$ or $M\st{1}\rt M$, for some $M $ in $\mmod R.$ Equivalently, it contains all representations $M_1\st{f}\rt M_2$ such that $f$ is a split monomorphism.\\
We usually use $\Hom_{\CQ}(-, -)$ to show the Hom-space between two representations. 
\begin{lemma}\label{lemma 6.1}
	Use the above notations. The subcategory $\CM_{R}(\CQ)$ is contravariantly finite in $\rm{rep}(\CQ, R)$. Moreover, the global $\CM_{R}(\CQ)$-dimension of $R\CQ$ is  at most one.
\end{lemma}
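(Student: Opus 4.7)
The proof plan is to exploit the short exact sequence $(\dagger)$ recalled just above the lemma:
$$0 \rt \bigoplus_{a \in E} e^{t(a)}_{\lambda}(X_{s(a)})\st{g_X}\rt  \bigoplus_{v \in V} e^{v}_{\lambda}(X_v) \st{f_X} \rt X \rt 0.$$
By definition, both $\bigoplus_a e^{t(a)}_\lambda(X_{s(a)})$ and $\bigoplus_v e^v_\lambda(X_v)$ lie in $\CM_R(\CQ)$, so this single sequence is the candidate that will simultaneously give an approximation and a length-$1$ resolution. Hence essentially one computation underlies both claims.

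First I would prove contravariant finiteness by showing that $f_X$ is a right $\CM_R(\CQ)$-approximation of $X$. Since $\CM_R(\CQ)$ is closed under finite direct sums, it suffices to lift any morphism $h:e^w_\lambda(N)\rt X$ through $f_X$. By the adjunction $e^w_\lambda\dashv e^w$, such $h$ corresponds to $h':N\rt X_w$. Composing $h'$ with the inclusion of $X_w$ into $(\bigoplus_v e^v_\lambda(X_v))_w$ as the summand indexed by the trivial path at $w$ produces a morphism $N\rt e^w(\bigoplus_v e^v_\lambda(X_v))$, whose adjunct $f:e^w_\lambda(N)\rt \bigoplus_v e^v_\lambda(X_v)$ is the desired lift; that $f_X\circ f=h$ follows because the counit $e^w_\lambda(X_w)\rt X$ is the identity on the trivial-path component when evaluated at $w$.

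Next I would prove that the global $\CM_R(\CQ)$-dimension is at most one by showing that $(\dagger)$ stays exact after applying $\Hom_{\CQ}(M,-)$ for every $M\in\CM_R(\CQ)$. Again we may reduce to $M=e^w_\lambda(N)$, in which case the adjunction gives $\Hom_{\CQ}(e^w_\lambda(N),-)\cong \Hom_R(N,(-)_w)$. The key observation, which is also the main technical point, is that evaluating $(\dagger)$ at any vertex $w$ yields a \emph{split} short exact sequence of $R$-modules: the trivial-path component of $e^w_\lambda(X_w)\hookrightarrow \bigoplus_v e^v_\lambda(X_v)$ produces the identity $X_w\rt X_w$ after composing with $f_X$ at $w$, so the identity on $X_w$ lifts to a splitting of the evaluated sequence. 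Once the sequence is split at each vertex, $\Hom_R(N,-)$ preserves its exactness for every $R$-module $N$.

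The step I expect to be the main obstacle is bookkeeping the trivial-path summand correctly, both in the lifting argument and in the splitting argument: one has to identify the precise component of $(e^v_\lambda(X_v))_w=\bigoplus_{Q(v,w)}X_v$ that corresponds to the identity at the vertex $w$, namely the summand $v=w$ indexed by the trivial path, and verify that $f_X$ restricts to the identity there. Once that is unpacked, both parts of the lemma fall out of $(\dagger)$ together.
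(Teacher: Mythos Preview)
Your argument is correct and, for the first claim, essentially identical to the paper's: both use the adjunction $(e^w_\lambda,e^w)$ to factor an arbitrary $h:e^w_\lambda(N)\to X$ through the component $e^w_\lambda(X_w)$ of $\bigoplus_v e^v_\lambda(X_v)$, verifying that $f_X$ is a right $\CM_R(\CQ)$-approximation.

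For the second claim the paper simply says it ``immediately follows'' from $(\dagger)$; implicitly this is because once $f_X$ is known to be a right $\CM_R(\CQ)$-approximation, $\Hom_\CQ(M,f_X)$ is surjective for every $M\in\CM_R(\CQ)$, and left exactness of $\Hom_\CQ(M,-)$ then forces $\Hom_\CQ(M,-)$ to preserve all of $(\dagger)$. Your route via vertex-wise splitting is a valid and slightly more hands-on alternative: it bypasses the approximation property and instead shows directly that $(\dagger)_w$ is split exact for every $w$, whence $\Hom_R(N,(-)_w)\cong\Hom_\CQ(e^w_\lambda(N),-)$ preserves it. This buys you an independent proof of the $\CM_R(\CQ)$-dimension bound (not relying on part one), at the cost of a little extra bookkeeping with the trivial-path summand that you already flagged; the paper's implicit route is shorter since the work was already done in part one.
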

\begin{proof}
	We first consider the first statement. To this end, we show that the morphism $f_X:\bigoplus_{v \in V}e^v_{\lambda}(X)\rt X$ appeared in the short exact sequence $(\dagger)$, introduced in the above, is a right $\CM_{R}(\CQ)$-approximation. Let us first present $f_X$ more explicitly as  $(f^v_X)_{v \in V}$, where $f^v_X:e^v_{\la}(X_v)\rt X$ is the image of the identity $\rm{id}_{X_v}$ under the isomorphism $\Hom_{\CQ}(e^v_{\la}(X_v), X)\simeq \Hom_{R}(X_v, X_v)$ for any $v$ in $V.$ To prove, it is enough to show that every morphism as $h: e^w_{\la}(M)\rt X$, $M \in \mmod R$ and $w \in V$, factors through $f_X.$ Consider the following natural isomorphisms
	$$\Hom_{\CQ}(e^w_{\la}(M), X)\simeq \Hom_{R}(M, X_w)\simeq \Hom_{\CQ}(e^w_{\la}(M), e^w_{\la}(X_w)).$$
	Denote $g:e^w_{\la}(M)\rt e^w_{\la}(X_w)$ the image of $h$ under the composition of the above two isomorphisms, that is indeed, $e^w_{\la}(d)$, where $d:M\rt X_w$ is the image of $h$ under the first isomorphism. Then by using the adjoint property we observer $f^w_X\circ g=h,$ so $h$ factors through the $f_X$. The latter part immediately follows from the short exact sequence $(\dagger).$
\end{proof}
Fortunately, by the above lemma, $\CM_{R}(\CQ)$ satisfies all the conditions we need to use Theorem \ref{Theorem 3.10}. Hence as an immediate consequence of the theorem we have the following result.
\begin{theorem}\label{Theorem 6.3}
	Let $\CQ$ be an acyclic quiver and $R$ a right notherian ring. Then, there is the following triangle equivalence
	$$\D_{\rm{sg}}(R\CQ)\simeq \frac{\K^{-, \rm{p}}(\CM_{R}(\CQ))}{\K^{\rm{b}}_{\rm{ac}}(\CM_{R}(\CQ))}.$$
\end{theorem}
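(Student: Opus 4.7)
The plan is to deduce this as a direct application of Theorem \ref{Theorem 3.10} with the base ring there replaced by the path ring $R\CQ$ and the subcategory taken to be $\CX := \CM_R(\CQ) \subseteq \mmod R\CQ \simeq \rm{rep}(\CQ, R)$. Recall that Theorem \ref{Theorem 3.10} requires three hypotheses on $\CX$: (i) $\rm{prj}\mbox{-}R\CQ \subseteq \CX$; (ii) $\CX$ is contravariantly finite in $\mmod R\CQ$; and (iii) the global $\CX$-dimension of $R\CQ$ is finite.

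Hypotheses (ii) and (iii) are supplied verbatim by Lemma \ref{lemma 6.1}; in fact the lemma upgrades (iii) to the assertion that the global $\CM_R(\CQ)$-dimension is at most one, which is read off from the canonical short exact sequence $(\dagger)$ whose outer terms both lie in $\CM_R(\CQ)$. For (i), I would invoke the classification of projective representations cited just before Lemma \ref{lemma 6.1} (Enochs-Estrada): the indecomposable finitely generated projective $R\CQ$-modules are, up to isomorphism, precisely the representations $e^v_\lambda(P)$ with $v \in \CQ_0$ and $P$ an indecomposable projective in $\rm{prj}\mbox{-}R$. Consequently every finitely generated projective $R\CQ$-module is a finite direct sum of such representations and hence belongs to $\CM_R(\CQ)$ by definition.

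With (i), (ii), (iii) verified, Theorem \ref{Theorem 3.10} applied to $\CX = \CM_R(\CQ)$ yields immediately the claimed triangle equivalence
$$\D_{\rm{sg}}(R\CQ) \simeq \frac{\K^{-, \rm{p}}(\CM_R(\CQ))}{\K^{\rm{b}}_{\rm{ac}}(\CM_R(\CQ))},$$
and no additional argument is needed. There is no genuine obstacle: all the substantive content has already been packaged in Lemma \ref{lemma 6.1} (contravariant finiteness of $\CM_R(\CQ)$ together with the dimension bound via the sequence $(\dagger)$), and the theorem reduces to the bookkeeping step of confirming via the Enochs-Estrada classification that the projectives of $R\CQ$ sit inside $\CM_R(\CQ)$.
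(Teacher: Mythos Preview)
Your proposal is correct and matches the paper's own argument essentially verbatim: the paper likewise observes (via the Enochs--Estrada classification, stated just before Lemma~\ref{lemma 6.1}) that $\rm{prj}\mbox{-}R\CQ \subseteq \CM_R(\CQ)$, invokes Lemma~\ref{lemma 6.1} for contravariant finiteness and the dimension bound, and then records Theorem~\ref{Theorem 6.3} as an immediate consequence of Theorem~\ref{Theorem 3.10}.
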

One advantage of the above equivalence is that $\D_{\rm{sg}}(R\CQ)$ is completely determined by the representations in the form of $e^v_{\la}(M)$ which have a  more simple structure and easy to handle some difficulties as we shall show in our next result. 
Let $F:\mmod R \rt \mmod R' $ be an exact functor the same as in Corollary \ref{Corolalry 5.10}(2), that is, $F(\rm{prj}\mbox{-}R)\subseteq \rm{prj}\mbox{-}R'$ and the stable induced functor $\underline{F}:\underline{\rm{mod}}\mbox{-}R\rt \underline{\rm{mod}}\mbox{-}R'$ is an equivalence. By applying component-wisely $F$, we get an exact  functor $\widetilde{F}:\rm{rep}(\CQ, R)\rt \rm{rep}(\CQ, R')$, preserving projective representations.
The preservation comes from this fact that for any $M \in \mmod R$ and $v \in V$, by the construction, we have $\tilde{F}(e^v_{\la}(M))=e^v_{\la}(F(M))$. Let us explain our construction for the case $\CQ$ is $A_2: 1\rt 2$.  For each representation $A\st{f}\rt B$ in $\rm{rep}(\CQ, R)$, $\tilde{F}(A\st{f}\rt B)=F(A)\st{F(f)}\rt F(B)$, and on morphisms can be defined similarly. The natural question may arise here is that whether $\tilde{F}$ induces an equivalence between the stable categories $\underline{\rm{rep}}(\CQ, R)$ and $\underline{\rm{rep}}(\CQ, R')$, or the same as $\underline{\rm{mod}}\mbox{-}R\CQ$ and $\underline{\rm{mod}}\mbox{-}R'\CQ$. Many difficulties it seems  one may occur, and of course we do not know  whether it is true or not at present. For example, if the functor $F$ is the tensor functor induced by a stable equivalence of Morita type, then one can show directly the $\tilde{F}$ makes an equivalence for the stable categories of the path rings as well. But in general it is not clear now. Surprisingly, the induces  quiver functor $\tilde{F}$  enables us to prove  the existence of  a triangle equivalence between $\D_{\rm{sg}}(R\CQ)$ and $\D_{\rm{sg}}(R'\CQ)$.  Although, the  structure of the singularity category $\D_{\rm{sg}}(R\CQ)$ is more complicated than the stable category $\underline{\rm{mod}}\mbox{-}R\CQ.$  This is mostly because of the  easy description of $\D_{\rm{sg}}(R\CQ)$ given in Theorem \ref{Theorem 6.3}. Based on Construction \ref{Costruction 2} since $\tilde{F}$ is an exact functor then there exists  the functor $\K^{\rm{p}}_{\rm{e}}\tilde{F}:\K^{-, \rm{p}}(\CM_{R}(\CQ))\rt \K^{-, \rm{p}}(\CM_{R'}(\CQ))$ obtained by applying $\tilde{F}$ terms by terms. In the rest,  we show that $\K^{\rm{p}}_{\rm{e}}\tilde{F}$ is a triangle equivalence.
For simplicity, denote $\K(\CQ(R))$ for the homotopy category of $\rm{rep}(\CQ, R).$ The functors $e^v$ and $e^v_{\la}$ naturally, by applying terms by terms, can be extended to the corresponding homotopy categories $\K(\mmod R)$ and $\K(\CQ(R))$, denoted, respectively, by $k^v$ and $k^v_{\la}$. The extended triangle functors form  an adjoint pair $(k^v_{\la}, k^v)$ as well. \\
Consider the following easy lemma.
\begin{lemma}\label{Lemma 6.3}
	Let $M$ be in $\mmod R$ and $v$ in $V.$  Then, we have the equality $\K^{\rm{p}}_{\rm{e}}\tilde{F}(k^v_{\la}(\mathbf{C}_N))= k^v_{\la}(\K^{\rm{p}}_{\rm{e}}F(\mathbf{C}_N))$. Recall that  $\mathbf{C}_N$ is the mapping cone  of an onto  chain map from a projective resolution $\mathbf{P}_N$ to $N$, see the beginning of Section 3, and also $\K^{\rm{p}}_{\rm{e}}F$ is defined in Construction \ref{Costruction 2}.
\end{lemma}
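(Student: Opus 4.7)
The plan is to reduce the claim to a single functorial identity at the module level, and then verify the chain-level equality degree by degree. Concretely, I would first establish that $\widetilde{F}\circ e^v_{\lambda}=e^v_{\lambda}\circ F$ holds as functors from $\mmod R$ to $\rm{rep}(\CQ, R')$, and then observe that both sides of the equality in the lemma are obtained by applying this identity in each degree of $\mathbf{C}_N$.

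For the module-level identity I would unpack the definitions. Given $M\in \mmod R$ and a vertex $w$ of $\CQ$, the representation $e^v_{\lambda}(M)$ has $w$-component $\bigoplus_{Q(v,w)}M$, with each arrow $a\colon w_1\to w_2$ acting as the natural injection $\bigoplus_{Q(v,w_1)}M\hookrightarrow \bigoplus_{Q(v,w_2)}M$. Since $\widetilde{F}$ is defined componentwise and $F$ is additive (indeed exact), the representation $\widetilde{F}(e^v_{\lambda}(M))$ has $w$-component $\bigoplus_{Q(v,w)}F(M)$ with arrow maps the corresponding natural injections; this is precisely $e^v_{\lambda}(F(M))$. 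The identification is natural in $M$ because $\widetilde{F}$ acts componentwise on morphisms too, so $\widetilde{F}(e^v_{\lambda}(\varphi))=e^v_{\lambda}(F(\varphi))$ for every $\varphi\colon M\to M'$.

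For the chain-level equality, recall from Construction \ref{Costruction 2} that $\K^{\rm{p}}_{\rm{e}}\widetilde{F}$ and $\K^{\rm{p}}_{\rm{e}}F$ (as well as the extensions $k^v$ and $k^v_{\lambda}$) are defined by applying the underlying functors termwise, with differentials obtained by applying the same functors to the module maps. Writing $\mathbf{C}_N\colon\cdots\to P^1_N\to P^0_N\to N\to 0$, the complex $k^v_{\lambda}(\mathbf{C}_N)$ has terms $e^v_{\lambda}(P^i_N)$ and $e^v_{\lambda}(N)$. Applying $\K^{\rm{p}}_{\rm{e}}\widetilde{F}$ degree by degree and invoking the module-level identity yields the complex with terms $e^v_{\lambda}(F(P^i_N))$ and $e^v_{\lambda}(F(N))$, and the differentials match by naturality. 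This is exactly $k^v_{\lambda}$ applied termwise to $\K^{\rm{p}}_{\rm{e}}F(\mathbf{C}_N)\colon \cdots\to F(P^1_N)\to F(P^0_N)\to F(N)\to 0$, which gives the desired equality.

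There is no real obstacle here: the statement is essentially a bookkeeping check that all four functors in sight are built by termwise application, so the claim collapses to its module-level counterpart $\widetilde{F}\circ e^v_{\lambda}=e^v_{\lambda}\circ F$. The only mild care needed is to read the equality as a canonical identification of complexes rather than literal equality on the nose, but this is immediate from the naturality established in the first step.
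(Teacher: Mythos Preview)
Your proposal is correct and matches the paper's intended approach: the paper labels this an ``easy lemma'' and provides no proof, but just before stating it the paper records the module-level identity $\tilde{F}(e^v_{\la}(M))=e^v_{\la}(F(M))$, which is exactly the reduction you carry out and then extend termwise. There is nothing to add.
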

Now we prove the promised singular equivalence.
\begin{theorem}
	Use the above notation. Then, the induced functor  $\K^{\rm{p}}_{\rm{e}}\tilde{F}:\K^{-, \rm{p}}(\CM_{R}(\CQ))
	\rt \K^{-, \rm{p}}(\CM_{R'}(\CQ))$ is a triangle equivalence. In particular, $\D_{\rm{sg}}(R\CQ)\simeq \D_{\rm{sg}}(R'\CQ)$ as triangulated categories.
\end{theorem}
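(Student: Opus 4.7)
The plan is to adapt the strategy of Proposition~\ref{Proposition 4.9} to the quiver setting, using Lemma~\ref{Lemma 6.3} as the bridge between the base and quiver sides. By Remark~\ref{remark1}, $\K^{-,\rm{p}}(\CM_R(\CQ))$ is generated, as a triangulated category, by the complexes $\mathbf{C}_M[i]$ with $M \in \CM_R(\CQ)$ and $i \in \Z$; since every such $M$ decomposes as a finite direct sum of representations of the form $e^v_\lambda(N)$, it suffices to check that $\K^{\rm{p}}_{\rm{e}}\tilde{F}$ is dense, full, and faithful on the family $\CG = \{\mathbf{C}_{e^v_\lambda(N)}[i] \mid v \in V,\ N \in \mmod R,\ i \in \Z\}$. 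The key preliminary observation is that, since $e^v_\lambda$ at a vertex $w$ is the functor $\bigoplus_{Q(v,w)}(-)$ (a \emph{finite} direct sum, as $\CQ$ is finite and acyclic), it is exact and preserves projectives; applying it to a projective resolution of $N$ yields a projective resolution of $e^v_\lambda(N)$, hence a canonical homotopy equivalence $k^v_\lambda(\mathbf{C}_N) \simeq \mathbf{C}_{e^v_\lambda(N)}$. Combining this with Lemma~\ref{Lemma 6.3} shows that $\K^{\rm{p}}_{\rm{e}}\tilde{F}$ sends $\mathbf{C}_{e^v_\lambda(N)}[i]$ to $\mathbf{C}_{\tilde{F}(e^v_\lambda(N))}[i]$, i.e., into the analogous family $\CG'$ on the $R'$-side.

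For density on $\CG'$, pick $\mathbf{C}_{e^v_\lambda(N')}[i]$ with $N' \in \mmod R'$. The stable equivalence $\underline{F}$ provides $N \in \mmod R$ with $F(N) \simeq N'$ in $\underline{\rm{mod}}\mbox{-}R'$, and stably isomorphic modules have homotopy-equivalent exact resolutions, so $k^v_\lambda(\mathbf{C}_{F(N)}) \simeq k^v_\lambda(\mathbf{C}_{N'}) \simeq \mathbf{C}_{e^v_\lambda(N')}$ realises the object as the image of the generator $\mathbf{C}_{e^v_\lambda(N)}[i] \in \CG$.

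For fullness and faithfulness on $\CG$, use the homotopy-category adjunction $(k^v_\lambda, k^v)$ together with the canonical identification $k^v \circ k^w_\lambda \simeq \bigoplus_{Q(w,v)}\id$ (which holds at the level of $\mmod R$ and extends term-wise) to obtain
$$\Hom_{\K}\bigl(k^v_\lambda(\mathbf{C}_A)[i],\, k^w_\lambda(\mathbf{C}_B)[j]\bigr) \;\simeq\; \bigoplus_{Q(w,v)} \Hom_{\K}\bigl(\mathbf{C}_A[i],\, \mathbf{C}_B[j]\bigr),$$
and the identical formula on the $R'$-side with the same index set $Q(w,v)$. By Lemma~\ref{Lemma 6.3}, under these identifications $\K^{\rm{p}}_{\rm{e}}\tilde{F}$ acts summand-by-summand through $\K^{\rm{p}}_{\rm{e}}F$; and $\K^{\rm{p}}_{\rm{e}}F$ is bijective on Hom-spaces between complexes $\mathbf{C}_A[i]$ and $\mathbf{C}_B[j]$ by Proposition~\ref{Proposition 4.9} applied to $F$ itself (viewed as an exact functor between the whole module categories, inducing the stable equivalence $\underline{F}$ with $\underline{F}\circ \Omega_R \simeq \Omega_{R'}\circ \underline{F}$). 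A standard dévissage along the triangles appearing in Remark~\ref{remark1} then propagates bijectivity from $\CG$ to all of $\K^{-,\rm{p}}(\CM_R(\CQ))$.

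Once $\K^{\rm{p}}_{\rm{e}}\tilde{F}$ is known to be a triangle equivalence, the second clause is automatic: because $\tilde{F}$ is exact and acts term-wise, $\K^{\rm{p}}_{\rm{e}}\tilde{F}$ sends bounded exact complexes to bounded exact complexes, hence restricts to a triangle equivalence $\K^{\rm{b}}_{\rm{ac}}(\CM_R(\CQ)) \simeq \K^{\rm{b}}_{\rm{ac}}(\CM_{R'}(\CQ))$; passing to Verdier quotients and applying Theorem~\ref{Theorem 6.3} yields $\D_{\rm{sg}}(R\CQ) \simeq \D_{\rm{sg}}(R'\CQ)$. I expect the main obstacle to be the bookkeeping for the Hom-space identification: verifying that the adjunction isomorphism is natural in both slots so that it intertwines $\K^{\rm{p}}_{\rm{e}}\tilde{F}$ with the summand-wise action of $\K^{\rm{p}}_{\rm{e}}F$, which is what allows Proposition~\ref{Proposition 4.9} for $F$ to transfer to the quiver setting. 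Everything else is formal given the machinery already in place.
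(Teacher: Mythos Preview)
Your proposal is correct and follows essentially the same route as the paper: reduce to the generating family of shifted complexes $\mathbf{C}_{e^v_\lambda(N)}\simeq k^v_\lambda(\mathbf{C}_N)$, use the adjunction $(k^v_\lambda,k^v)$ to bring Hom-computations down to $\K^{-,\rm p}(\mmod R)$, invoke Proposition~\ref{Proposition 4.9}(3) for the base functor $F$, and use Lemma~\ref{Lemma 6.3} to match the two sides. Your treatment of the ``in particular'' clause via restriction to $\K^{\rm b}_{\rm ac}$ and Theorem~\ref{Theorem 6.3} is exactly what the paper has in mind (and your index set $Q(w,v)$ in the Hom-decomposition is the correct one).
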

\begin{proof}
	As we did before in the proof of Proposition \ref{Proposition 4.9}, it is enough to show that the restricted functor $\K^{\rm{p}}_{\rm{e}}\tilde{F}:\CM_R(\CQ)_0\rt \CM_{R'}(\CQ)_0$ is an equivalence. Recall that $\CM_R(\CQ)_0$ is the subcategory of  $\K^{-, \rm{p}}(\CM_{R}(\CQ))$ consisting of all shiftings of the complexes in the form of $\mathbf{C}_{e^v_{\la}(M)}$ for some $M \in \mmod R$ and $v \in V,$ and similarly for $\CM_{R'}(\CQ)_0$. We recall again that  $\mathbf{C}_{e^v_{\la}(M)}$  is an exact complex obtained from  a projective resolution of $e^v_{\la}(M)$.  We may assume that  $\mathbf{C}_{e^v_{\la}(M)}$ is  obtained by applying $k^v_{\la}$  on $\mathbf{C}_M$. This makes a sense because $e^v_{\la}$ is an exact functor preserving projective representations.
	 For density, take $\mathbf{C}_{e^v_{\la}(N)}[i]$ in $\CM_{R'}(\CQ)_0$, where $N \in \mmod R'$. By the equivalence $\underline{F}$, there is $M$ in $\mmod R$ such that $\underline{F}(M)\simeq N.$ Then one can see by Lemma \ref{Lemma 6.3}, $\K^{\rm{p}}_{\rm{e}}\tilde{F}(\mathbf{C}_{e^v_{\la}(M)}[i])\simeq \mathbf{C}_{e^v_{\la}(N)}[i].$ Let $\mathbf{C}_{e^w_{\la}(T)}[j]$ and $\mathbf{C}_{e^v_{\la}(M)}[i]$ be arbitrary  complexes in $\CM_R(\CQ)_0$. There is the following chain of natural isomorphisms
	 	\begin{align*}
	 \Hom_{\K(\CQ(R))}(\mathbf{C}_{e^v_{\la}(M)}[i], \mathbf{C}_{e^w_{\la}(T)}[j] )&= \Hom_{\K(\CQ(R))}(k^v_{\la}(\mathbf{C}_M)[i], k^w_{\la}(\mathbf{C}_T)[j]) \\
	 &\simeq \Hom_{\K(R)}(\mathbf{C}_M[i], \oplus_{\CQ(v, w)}\mathbf{C}_T[j]) \\ &\simeq \Hom_{\K(R')}(\K^{\rm{p}}_{\rm{e}}F(\mathbf{C}_M[i]), \oplus_{\CQ(v, w)}\K^{\rm{p}}_{\rm{e}}F(\mathbf{C}_T[j]))\\
	 &\simeq \Hom_{\K(\CQ(R'))}(k^v_{\la}(\K^{\rm{p}}_{\rm{e}}F(\mathbf{C}_M[i])), k^w_{\la}(\K^{\rm{p}}_{\rm{e}}F(\mathbf{C}_T[j])))\\&\simeq \Hom_{\K(\CQ(R'))}(\K^{\rm{p}}_{\rm{e}}\tilde{F}(\mathbf{C}_M[i]), \K^{\rm{p}}_{\rm{e}}\tilde{F}(\mathbf{C}_T[j])).
	 \end{align*}
	 where the second isomorphism obtained from the adjoint pair $(k^v_{\la}, k^v)$, for the third $\K^{\rm{p}}_{\rm{e}}F:\K^{-, \rm{p}}(\mmod R)\rt \K^{-, \rm{p}}(\mmod R')$ is the induced functor, introduced in Construction \ref{Costruction 2}, which is itself an equivalence by Proposition \ref{Proposition 4.9}(3),  for the forth we again use the adjoint pair $(k^v_{\la}, k^v)$ but this turn relative to $\rm{rep}(\CQ, R')$, and finally we use Lemma \ref{Lemma 6.3}. 
\end{proof}
Let $\mathbb{M}_n(R)$ be the set of all $n\times n$ square matrices with coefficients in $R$ for $n\in \mathbb{N}$. $\mathbb{M}_n(R)$ is a ring with respect to the
usual matrix addition and multiplication. The subset
$$\mathbb{T}_n(R)=
\left[ \begin{array}{cccc}

R & 0 & \cdots & 0 \\
R & R & \cdots & 0 \\
\vdots & \vdots & \ddots & \vdots \\
R & R & \cdots & R \\
\end{array} \right]     
$$
of $\mathbb{M}_n(R)$ consisting of all triangular matrices $[a_{ij}]$ in $\mathbb{M}_n(R)$ with zeros
over the main diagonal is a subring of $\mathbb{M}_n(R)$. It is well known for the   quiver 
$$\xymatrix{ A_n : 1 \ar[r] & 2 \ar[r] &3 \ar[r]  & \cdots \ar[r]& n,  }$$  there is an isomorphism of rings $RA_n\cong \mathbb{T}_n(R)$.
We specialize our results
 on the quiver $A_n$, then:
\begin{corollary}\label{corollary 6.5}
Let $F:\mmod R\rt \mmod R'$ be the same as the above and $n \in \mathbb{N}$. Then, $T_n(R)$ and $T_n(R')$ are singularly equivalent.	
\end{corollary}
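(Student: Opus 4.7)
The plan is to apply the immediately preceding theorem with the particular acyclic quiver $\CQ = A_n$. The functor $F:\mmod R\rt\mmod R'$ in the hypothesis is exactly of the type considered there (exact, sending $\rm{prj}\mbox{-}R$ to $\rm{prj}\mbox{-}R'$, and inducing an equivalence $\underline{F}$ of stable module categories that commutes with syzygies). Since $A_n$ is a finite acyclic quiver, the preceding theorem produces a triangle equivalence
\[
\D_{\rm{sg}}(RA_n)\simeq \D_{\rm{sg}}(R'A_n).
\]

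The second and final step is to translate this statement back to triangular matrix rings via the ring isomorphism $RA_n\cong \mathbb{T}_n(R)$ (and the analogous isomorphism $R'A_n\cong\mathbb{T}_n(R')$) recorded in the paragraph just before the corollary. Under this isomorphism the module categories are equivalent, and hence so are their singularity categories; composing with the equivalence above gives $\D_{\rm{sg}}(\mathbb{T}_n(R))\simeq \D_{\rm{sg}}(\mathbb{T}_n(R'))$, which is precisely the asserted singular equivalence.

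There is no real obstacle: the corollary is a direct specialization of the preceding theorem, and all the heavy lifting (the construction of the functor $\K^{\rm{p}}_{\rm{e}}\tilde F$ and the verification that it is a triangle equivalence via the description of $\D_{\rm{sg}}(R\CQ)$ from Theorem \ref{Theorem 6.3}) has already been done in that proof. The only thing to note is that the hypothesis on $F$ is transported verbatim into the hypothesis of the preceding theorem, so no additional assumption is needed.
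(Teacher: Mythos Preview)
Your proposal is correct and matches the paper's intended argument exactly: the corollary is stated in the paper without proof, immediately after the sentence ``We specialize our results on the quiver $A_n$, then:'', so the paper's implicit proof is precisely the specialization of the preceding theorem to $\CQ=A_n$ together with the ring isomorphism $RA_n\cong \mathbb{T}_n(R)$ recorded just above. The only minor remark is that the syzygy-commutation condition you mention is automatic here since $F$ is assumed exact (as noted at the end of Construction~\ref{Costruction 2}), so you need not list it separately.
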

\subsection{Triangular 	Matrix Rings}
Throughout  this subsection let $R$ and $S$ be two given right notherian rings and  $S$-$R$-bimodule ${}_SM_R$ with $M_R$ is a finitely generated $R$-module. The triangular matrix ring
$ \tiny {T=\left[\begin{array}{ll} R & 0 \\ M & S \end{array} \right]}$ has as its elements matrices  $ \tiny {\left[\begin{array}{ll} r & 0 \\ m & s \end{array} \right]}$ where $r \in R, \  s \in S$ and $m \in M$ with addition defined coordinate wise and multiplication given by $ \tiny {\left[\begin{array}{ll} r & 0 \\ m & s \end{array} \right]} \tiny {\left[\begin{array}{ll} r' & 0 \\ m' & s' \end{array} \right]}=  \tiny {\left[\begin{array}{ll} rr' & 0 \\ mr'+sm' & ss' \end{array} \right]}.$ It is not difficult to see that $T$ is right notherian as well. It is well-known \cite{Gr} that $\mmod T$ is equivalent to the category $\CT$ consisting of all triples $(X, Y)_f$ where $ X \in \mmod R$, $Y \in \mmod S$ and $f:Y\otimes_SM_R \rt X$ is a $R$-linear map. A morphism $(X_1, Y_1)_f\rt (X_2, Y_2)_{f'}$ is a pair $(\phi_1, \phi_2)$ where $\phi_1:X_1\rt X_2$, $\phi_2:Y_1 \rt Y_2$, such that there is the following commutative diagram 
	\[\xymatrix{ Y_1 \otimes_SM_R \ar[r]^<<<<<{f} \ar[d]^{\phi_2\otimes _S M_R} & X_1 \ar[d]^{\phi_1} \\ Y_2\otimes_S M_R \ar[r]^<<<<<{f'} & X_2. }\]
Due to this equivalence we identify $\mmod T$ with $\CT.$ Analogous to representations of quivers, we have two evaluation functors $e^1:\mmod T\rt \mmod R$ and $e^2:\mmod T\rt \mmod S$ in the following way: for every $T$-module $(X, Y)_f$,  $e^1((X, Y)_f)=X$ and $e^2((X, Y)_f)=Y$. In this setting, the evaluation functors also have left adjoints. They are explicitly described as follows: For any $X \in \mmod R$, $e^1_{\la}(X)=(X, 0)_0$, and for any $Y \in \mmod S$,  $e^2_{\la}(Y)=(Y\otimes_S M_R, Y)_1$, here ``$1$'' denotes the identity morphism $\rm{Id}_{Y\otimes_S M_R}$. But in this case the left adjoint are not exact in general. More precisely, $e^1_{\la}$ is exact but the latter one might be not  in general. \\
Let $\CM(T)$ be the subcategory of $\mmod T$ formed by all $T$-modules which are isomorphic to a direct sum $e^1_{\la}(M)\oplus e^2_{\la}(N)$ for some $M \in \mmod R$ and $N \in \mmod S.$ One can see easily $\CM(T)$ formed by all objects $(X, Y)_f$ such that $f:Y\otimes_SM_R\rt X$ is a split monomorphism.
\begin{lemma}
	Let $\CM(T)$ be  defined as in the above. The subcategory $\CM(T)$ is contravariantly finite in $\mmod T$, and the global  $\CM(T)$-dimension of $T$ is at most one.
\end{lemma}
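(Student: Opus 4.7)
The plan is to mimic Lemma~\ref{lemma 6.1}, replacing the vertex-adjoints $e^v_{\la}$ with the two left adjoints $e^1_{\la}$ and $e^2_{\la}$. For the contravariant finiteness, given $(X,Y)_f\in\mmod T$, I would take as candidate right $\CM(T)$-approximation the sum of the two adjunction counits
$$h_{(X,Y)_f}\colon e^1_{\la}(X)\oplus e^2_{\la}(Y)\lrt (X,Y)_f,$$
whose first component $(X,0)_0\rt(X,Y)_f$ is the pair $(\id_X,0)$ and whose second component $(Y\otimes_SM_R,Y)_1\rt(X,Y)_f$ is the pair $(f,\id_Y)$. These correspond to $\id_X$ and $\id_Y$ under the adjoint isomorphisms $\Hom_T(e^1_{\la}(N),(X,Y)_f)\simeq \Hom_R(N,X)$ and $\Hom_T(e^2_{\la}(P),(X,Y)_f)\simeq \Hom_S(P,Y)$. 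Any morphism $e^1_{\la}(N)\oplus e^2_{\la}(P)\rt (X,Y)_f$ from an object of $\CM(T)$ thus corresponds to a pair of morphisms $N\rt X$ and $P\rt Y$, and applying $e^1_{\la}$ and $e^2_{\la}$ to this pair produces the required factorisation through $h_{(X,Y)_f}$.

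For the dimension bound I would compute the kernel of $h_{(X,Y)_f}$ in $\CT$ coordinate-wise: on the $S$-coordinate the induced map is $\id_Y$, with zero kernel, while on the $R$-coordinate the induced map $X\oplus(Y\otimes_SM_R)\rt X$, $(x,u)\mapsto x+f(u)$, has kernel $\{(-f(u),u)\mid u\in Y\otimes_SM_R\}\cong Y\otimes_SM_R$. Because the $S$-coordinate of the kernel is zero, the structure map of the kernel triple is automatically zero, so the kernel is $(Y\otimes_SM_R,0)_0=e^1_{\la}(Y\otimes_SM_R)\in\CM(T)$. This produces the short exact sequence
$$0\lrt e^1_{\la}(Y\otimes_SM_R)\lrt e^1_{\la}(X)\oplus e^2_{\la}(Y)\lrt (X,Y)_f\lrt 0,$$
a length-one $\CM(T)$-resolution of $(X,Y)_f$. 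To finish I must verify that this sequence remains exact after $\Hom_T(Z,-)$ for every $Z\in\CM(T)$; by additivity it suffices to take $Z=e^1_{\la}(N)$ and $Z=e^2_{\la}(P)$, and by the two adjunctions this reduces to applying $e^1$ and $e^2$ to the sequence. The functor $e^1$ yields $0\rt Y\otimes_SM_R\rt X\oplus(Y\otimes_SM_R)\rt X\rt 0$, which is split exact with section $x\mapsto(x,0)$, and $e^2$ yields the trivial $0\rt 0\rt Y\st{\id}\rt Y\rt 0$; both are preserved by every Hom functor.

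The only mild obstacle, by contrast with the quiver case of Lemma~\ref{lemma 6.1}, is that $e^2_{\la}$ need not be exact in general, so the displayed short exact sequence cannot be read off from a formal statement about counits; its kernel has to be identified by the explicit calculation above. Once that calculation is in place, however, everything else is forced by the two adjunctions $(e^i_{\la},e^i)$, $i=1,2$.
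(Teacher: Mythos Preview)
Your proposal is correct and follows exactly the route the paper takes: it constructs the same short exact sequence
\[
0\lrt (Y\otimes_SM_R,0)_0\lrt (X,0)_0\oplus(Y\otimes_SM_R,Y)_1\lrt (X,Y)_f\lrt 0
\]
and uses the adjunctions $(e^i_{\la},e^i)$ to verify the approximation property, just as in Lemma~\ref{lemma 6.1}. Your write-up is in fact more detailed than the paper's, which simply displays the sequence and refers back to Lemma~\ref{lemma 6.1}; note also that the $\Hom_T(Z,-)$-exactness you check at the end is automatic once you know the epimorphism is a right $\CM(T)$-approximation, since $\Hom$ is left exact.
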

\begin{proof}
	The proof in general is the same as Lemma \ref{lemma 6.1}. In this setting we also have a short exact sequence for any $(X, Y)_f$, similar to $(\dagger)$  in subsection \ref{Subsection 3.1},   as the following
	$$0 \rt (Y\otimes_SM_R, 0)_0\rt (X, 0)_0\oplus (Y\otimes_S M_R, Y)_1\rt (X, Y)_f\rt 0$$
	Analogous to $(\dagger)$ the above short exact sequence is essential to prove the lemma.
\end{proof}
An immediate consequence of lemma above in conjunction with Theorem \ref{Theorem 3.10} is the following:
\begin{theorem}\label{theorem 6.7}
	let $T$ be a triangular matrix ring  $ \tiny {T=\left[\begin{array}{ll} R & 0 \\ M & S \end{array} \right]}$ and $\CM(T)$ as defined above. Then, there is the following equivalence of triangulated categories
	$$\D_{\rm{sg}}(T)\simeq \frac{\K^{-, \rm{p}}(\CM(T))}{\K^{\rm{b}}_{\rm{ac}}(\CM(T))}.$$
	\end{theorem}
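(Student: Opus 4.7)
The plan is to deduce Theorem \ref{theorem 6.7} as a direct specialization of Theorem \ref{Theorem 3.10} applied to the subcategory $\CX = \CM(T) \subseteq \mmod T$. To invoke that theorem I must verify three hypotheses on $\CM(T)$: that it contains $\rm{prj}\mbox{-}T$, that it is contravariantly finite in $\mmod T$, and that the global $\CM(T)$-dimension of $T$ is finite.

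First I would record that $\rm{prj}\mbox{-}T \subseteq \CM(T)$. This follows from the standard description of indecomposable projective $T$-modules under the identification $\mmod T \simeq \CT$: every indecomposable projective is either of the form $e^1_{\la}(P) = (P,0)_0$ for an indecomposable $P \in \rm{prj}\mbox{-}R$ or of the form $e^2_{\la}(Q) = (Q \otimes_S M_R, Q)_1$ for an indecomposable $Q \in \rm{prj}\mbox{-}S$; since we assume $M_R$ is finitely generated, these all lie in $\mmod T$. Because $\CM(T)$ is, by definition, closed under finite sums and direct summands (our standing convention for subcategories) and consists of the objects of the listed forms, the inclusion is immediate.

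Second, the contravariant finiteness of $\CM(T)$ in $\mmod T$ and the bound of at most one on the global $\CM(T)$-dimension of $T$ are exactly the content of the preceding lemma; its proof produced for each $(X,Y)_f$ the short exact sequence
\[
0 \longrightarrow (Y\otimes_S M_R, 0)_0 \longrightarrow (X, 0)_0 \oplus (Y\otimes_S M_R, Y)_1 \longrightarrow (X, Y)_f \longrightarrow 0,
\]
in which the middle term lies in $\CM(T)$, the left term (being $e^1_{\la}(Y\otimes_S M_R)$) also lies in $\CM(T)$, and the surjection is the desired right $\CM(T)$-approximation. Exactness of this sequence upon applying $\Hom_T(Z,-)$ for $Z \in \CM(T)$ gives both the approximation property and the bound on $\CM(T)$-dimension.

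With these three ingredients in place, Theorem \ref{Theorem 3.10} applies verbatim with $R$ replaced by $T$ and $\CX$ replaced by $\CM(T)$, yielding the desired triangle equivalence
\[
\D_{\rm{sg}}(T) \simeq \frac{\K^{-,\rm{p}}(\CM(T))}{\K^{\rm{b}}_{\rm{ac}}(\CM(T))}.
\]
There is no genuine obstacle here beyond the verification of the hypotheses: the theorem is essentially a translation of the setup for path rings in Subsection 5.1 to the two-point ``quiver'' corresponding to $T$, and the only point that required separate attention in the triangular matrix setting was the non-exactness of $e^2_{\la}$, which is handled inside the lemma and plays no further role here.
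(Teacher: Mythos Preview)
Your proposal is correct and follows essentially the same approach as the paper: the paper simply states that Theorem \ref{theorem 6.7} is ``an immediate consequence of [the] lemma above in conjunction with Theorem \ref{Theorem 3.10},'' and you have spelled out precisely the verification of the hypotheses of Theorem \ref{Theorem 3.10} that this appeal entails. Your explicit check that $\rm{prj}\mbox{-}T \subseteq \CM(T)$ via the classification of projective $T$-modules is a detail the paper leaves implicit here (it cites \cite[Theorem 3.1]{HV} for this fact only later, in the proof of the subsequent proposition), but the argument is otherwise identical.
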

As an application of the above description of the singularity categories of triangular matrix rings, in the next result, we will show how a certain  stable equivalence can be lifted to a singular equivalence. 
\begin{proposition}
let $T$ be a triangular matrix ring  $ \tiny {T=\left[\begin{array}{ll} R & 0 \\ M & S \end{array} \right]}$. Assume the functor $-\otimes_SM_R:\mmod S\rt \mmod R$ induces
 an equivalence between the stable categories $\underline{\rm{mod}}\mbox{-}S\simeq \underline{\rm{mod}}\mbox{-}R$.	Necessarily, the functor must preserve the projective modules. Then,  the following triangle equivalence of the singularity categories occurs:

	$$\D_{\rm{sg}}(\tiny {\left[\begin{array}{ll} R & 0 \\ M & S \end{array} \right]})\simeq \D_{\rm{sg}}(\tiny {\left[\begin{array}{ll} R & 0 \\ R & R \end{array} \right]}).$$ 
\end{proposition}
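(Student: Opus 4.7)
By Theorem \ref{theorem 6.7} applied to both $T$ and $T'=\left[\begin{array}{ll} R & 0 \\ R & R \end{array}\right]$, it suffices to construct a triangle equivalence between the Verdier quotients $\K^{-,\rm{p}}(\CM(T))/\K^{\rm{b}}_{\rm{ac}}(\CM(T))$ and $\K^{-,\rm{p}}(\CM(T'))/\K^{\rm{b}}_{\rm{ac}}(\CM(T'))$. Every object of $\CM(T)$ is isomorphic to $e^1_\la(X)\oplus e^2_\la(Y)$ for some $X\in\mmod R$ and $Y\in\mmod S$ (and analogously for $\CM(T')$, where the second coordinate is an $R$-module), so I define $F:\CM(T)\rt\CM(T')$ by
\[ F(e^1_\la(X)\oplus e^2_\la(Y))=(e')^1_\la(X)\oplus (e')^2_\la(Y\otimes_SM_R), \]
and on morphisms via the natural decomposition of $\Hom_{\CM(T)}$-spaces into $\Hom_R(X,X')\oplus\Hom_R(X,Y'\otimes_SM_R)\oplus\Hom_S(Y,Y')$, acting by $-\otimes_SM_R$ on the $\Hom_S$-summand and as the identity on the other two. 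The inclusion $F(\rm{prj}\mbox{-}T)\subseteq\rm{prj}\mbox{-}T'$ is immediate, because $M_R=S\otimes_SM_R$ is projective by the ``necessarily'' remark in the statement.

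A parallel decomposition of stable hom-spaces shows that the induced functor $\underline F:\underline{\CM(T)}\rt\underline{\CM(T')}$ is an equivalence of stable categories: full faithfulness reduces to the bijectivity of $\underline\Hom_S(Y,Y')\rt\underline\Hom_R(Y\otimes_SM_R,\,Y'\otimes_SM_R)$, which is exactly the stable equivalence hypothesis, while essential surjectivity follows from the essential surjectivity of $\underline{-\otimes_SM_R}$ on the second coordinate. A direct computation with a projective $T$-cover of the form $e^1_\la(P_X)\oplus e^2_\la(P_Y)\twoheadrightarrow e^1_\la(X)\oplus e^2_\la(Y)$ then yields the syzygy compatibility $F(\Omega_T(A))\simeq\Omega_{T'}(F(A))$ in $\underline{\CM(T')}$ for every $A\in\CM(T)$.

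With all these ingredients in hand, the idea is to invoke Theorem \ref{Coroallry 5.7}(3) with $\CX=\CM(T)$, $\CX'=\CM(T')$, ambient quasi-resolving categories $\CY=\mmod T$ and $\CY'=\mmod T'$, and the component-wise extension $G:\mmod T\rt\mmod T'$, $(X,Y)_f\mapsto(X,Y\otimes_SM_R)_f$, as the required exact functor between $\CY$ and $\CY'$; together with Construction \ref{Costruction 2}, Proposition \ref{Proposition 4.9} and Remark \ref{remark1} (the latter reducing fullness and faithfulness of the lifted functor on $\K^{-,\rm{p}}$ to the generating family $\{\mathbf{C}_A[i]:A\in\CM(T),\,i\in\Z\}$), this delivers the desired triangle equivalence $\D_{\rm{sg}}(T)\simeq\D_{\rm{sg}}(T')$. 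The main obstacle is verifying that $G$ is actually exact on $\mmod T$, or equivalently that the termwise extension $\K^{\rm{p}}_{\rm{e}}F$ sends $\K^{\rm{b}}_{\rm{ac}}(\CM(T))$ into $\K^{\rm{b}}_{\rm{ac}}(\CM(T'))$: both conditions reduce to the exactness of $-\otimes_SM_R$ on $\mmod S$, which must be extracted from the stable equivalence assumption, for instance by arguing that the bimodule $_SM$ is necessarily flat under the given hypothesis.
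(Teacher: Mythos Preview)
Your overall strategy coincides with the paper's: define the ``obvious'' functor $F$ on $\CM(T)$ (it agrees with the paper's $F$ on objects and morphisms), check that $\underline{F}$ is a stable equivalence, and then invoke Theorem \ref{Coroallry 5.7}(3). The divergence is in the choice of the ambient quasi-resolving subcategory $\CY$, and this is exactly where your argument breaks.

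You take $\CY=\mmod T$ and $G:(X,Y)_f\mapsto (X,Y\otimes_SM_R)_f$. For $G$ to be exact on all of $\mmod T$ you need $-\otimes_SM_R$ to be exact on $\mmod S$, i.e.\ $_SM$ must be flat. You flag this yourself as ``the main obstacle'' and suggest it should be ``extracted from the stable equivalence assumption'', but this is not proved and there is no reason it should follow: the hypothesis only forces $M_R$ to be projective (so that projectives are preserved), and says nothing about the left $S$-structure of $M$. Without flatness your $G$ is not exact and Theorem \ref{Coroallry 5.7}(3) does not apply; equivalently, your termwise extension need not carry $\K^{\rm{b}}_{\rm{ac}}(\CM(T))$ into $\K^{\rm{b}}_{\rm{ac}}(\CM(T'))$. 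The same issue undermines your direct syzygy computation: the kernel of $e^2_\la(P_Y)\twoheadrightarrow e^2_\la(Y)$ has first component $\ker(P_Y\otimes_SM\to Y\otimes_SM)$, which is not $\Omega_S(Y)\otimes_SM$ unless $_SM$ is flat.

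The paper avoids this entirely by choosing a \emph{smaller} ambient category, namely the monomorphism category $\CS(T)=\{(A,B)_f : f \text{ is a monomorphism}\}$ (and $\CS(R)$ on the other side). The point is that on $\CS(T)$ the exactness of $G$ can be verified by a diagram chase that uses the monomorphism condition in place of flatness: given a short exact sequence $0\to(A^0,B^0)_{f^0}\to(A^1,B^1)_{f^1}\to(A^2,B^2)_{f^2}\to 0$ in $\CS(T)$, the composite $B^0\otimes_SM\xrightarrow{f^0}A^0\hookrightarrow A^1$ is a monomorphism, and commutativity forces $B^0\otimes_SM\to B^1\otimes_SM$ to be a monomorphism too, so the tensored sequence is short exact even though $-\otimes_SM$ is only right exact a priori. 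This is the missing idea in your argument.
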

\begin{proof}
 Setting  $\tiny {T= \left[\begin{array}{ll} R & 0 \\ M & S \end{array} \right]}$ and $\tiny {T_2(R)= \left[\begin{array}{ll} R & 0 \\ R & R \end{array} \right]}$.  Let $\CM(T)$ and $\CM(T_2(R))$ be  defined as in the above for the corresponding triangular matrix rings. Indeed, the subcategory $\CM(T_2(R))$ is nothing else than the subcategory $\CM_{R}(A_2)$ of the representations over the quiver $A_2.$ For the  connivance, we prefer to work with $\CM_{R}(A_2)$ instead of $\CM(T_2(R))$. To get the result, we use Theorem \ref{Coroallry 5.7} (3). For this we need to define a functor $F:\CM(T)\rt \CM_{R}(A_2)$ satisfying the required conditions in the theorem and to be an restriction of an exact functor between some quasi-resolving subcategories. Let $\CS(T)$ be the subcategory of $\mmod T$ formed by all modules $(X, Y)_f$ such that $Y\otimes_SM_R\rt X$ is a monomorphism. The subcategory $\CS(R)$ of $\mmod R A_2$ is defined similarly, indeed, it is the category of all (submodules) monomorphisms in $\mmod R,$ in the sense of Ringel and Schmidmeier.  One can see both subcategories are quasi-resolving and contain the subcategories $\CM(T)$ and $\CM_{R}(A_2)$, respectively.   Now we define the functor $G:\CS(T)\rt \CS(R)$ in the following way. Let $(A, B)_f$ in $\CS(T)$ be given.  Then $B\otimes_S M_R \st{f}\rt A$ is a  monomorphism. But this means that the  monomorphism $f$ lies as an object  in $\CS(R)$. Define $G((A, B)_f):=f$, and on morphisms $G$ is defined in an obvious way.  Now we show that the functor $G$ already defined preserves  short exact sequences in $\CS(T)$. Let $0 \rt (A^0, B^0)_{f^0}\rt (A^1, B^1)_{f^1}\rt (A^2, B^2)_{f^2}\rt 0$ be a short exact with all terms in $\CS(T)$. Then it induces the following commutative diagram with exact rows in $\mmod R$
	\[\xymatrix{ & B^0\otimes_S M_R \ar[d]_{f^0} \ar[r] &  B^1\otimes_S M_R \ar[d]^{f^1} \ar[r] &  B^2\otimes_S M_R \ar[d]^{f^2} \ar[r] &0  \\ 0  \ar[r] & A^0\ar[r] & A^1\ar[r] & A^2\ar[r] & 0.}\]
	Since $f^0$ is monomorphism we get the sequence in the top is also a short exact sequence. Hence the above commutative diagram gives us the following short exact sequence in $\mmod R A_2$ {\footnotesize  \[ \xymatrix@R-2pc {  &  ~ B^0\otimes_S M_R\ar[dd]^{f^0}~   & B^1\otimes_R M_S\ar[dd]^{f^1}~  & B^2\otimes_R M_S\ar[dd]^{f^2} \\   0 \ar[r] &  _{ \ \ \ \ } \ar[r]  &_{\ \ \ \ \ } \ar[r]_{\ \ \ \ \ }&  _{\ \ \ \ \ }\ar[r] & 0, \\ & A^0& A^1 & A^2 }\]} where it is actually the image of the given short exact sequence in $\CM(T)$ under the functor $G$. So we are done the claim.	We know by \cite[Theorem 3.1]{HV} any projective module in $\mmod T$ can be written up to isomorphism as $(P, 0)_0\oplus (Q\otimes_SM_R, Q)_1$ for some $P\in \rm{prj}\mbox{-}R$ and $Q \in \rm{prj}\mbox{-}S.$ Using the fact which the functors $-\otimes_SM_R$  preserves the projective modules,  and  in view of the characterization of projective modules over triangular matrix rings, we can  verify that the functor $G$ also preserves the projective modules. Set $F=G\mid_{\CM(T)}$. The essential image of the restricted functor $F$ is clearly in $\CM_{R}(A_2)$. To complete the proof, we must  show
	that the induced functor $\underline{F}:\underline{\CM(T)}\rt \underline{\CM_{R}(A_2)}$ 
	is an equivalence. As any module in $\CM(T)$ (and also similarly  in $\CM_{R}(A_2)$ for the relevant cases) is isomorphic to a finite direct sum of modules in the forms $(X, 0)_0$ or $(Y\otimes_SM_R, Y)_1$ where $X \in \mmod R$ and $Y \in \mmod S$, hence to prove the claim  it is enough to concentrate on  such modules. For density, let $(0\rt Z)$ and $(W\st{1}\rt W)$ in $\CM_{R}(A_2)$ are given. Since $-\otimes_{S}M_R$ induces an equivalence from $\underline{\rm{mod}}\mbox{-}S$ to $\underline{\rm{mod}}\mbox{-}R$, then there exists projective modules $P$ and $Q$ in $\rm{prj}\mbox{-}R$ and $U \in \mmod S$ with isomorphism $W\oplus P\simeq (U\otimes_{S}M_R)\oplus Q$ in $\mmod R.$ With the isomorphism we can get the $(W\st{1}\rt W)\oplus (P\st{1}\rt P)\simeq F((U\otimes_{S}M_{R}, U)_1)\oplus (Q\st{1}\rt Q).$ Hence $F((U\otimes_{S}M_{R}, U)_1)\simeq (W\st{1}\rt W)$ in $\underline{\CM_{R}(A_2)}$, so the result follows. The case of $(0\rt Z)$ is easy. For fullness and faithfulness, assume that $D$ and $E$ are in $\CM(T)$. We must prove the induced group homomorphism $G:\underline{\Hom}_T(D, E)\rt \underline{\Hom}_{T_2(R)}(F(D), F(E))$ is bijective. To prove, it is enough to consider the following cases. $(1)$ If $D=(X, 0)_0$ and $E=(Y, 0)_0$, where $X, Y \in \mmod R,$ then it is clear. $(2)$ If $D=(Y\otimes_{S} M_{R}, Y)_1$ and $E=(X, 0)_0$, where $X \in \mmod R$ and $Y \in \mmod S$, then since the both
	Hom-spaces are zero, then this case is also clear. $(3)$ If $D=(X, 0)_0$ and $E=(Y\otimes_{S}M_{R}, Y)_1$, then both the Hom-spaces are isomorphic to the Hom-space $\underline{\Hom}_{R}(X, Y\otimes_{S}M_{R})$ by the relevant adjoint pairs. $(4)$ If $D=(Y\otimes_{S}M_{R}, Y)_1$ and $E=(Z\otimes_{S}M_{R}, Z)_1$, then by using the relevant adjoint pairs and the equivalence induced by $-\otimes_{S}M_{R}$, we have:	  
	\begin{align*}
	\underline{\Hom}_{T_2(R)}(F(D), F(E)) &\simeq \underline{\rm{Hom}}_{R}(Y\otimes_{S}M_{R}, Z\otimes_{S}M_{R})\\
	&\simeq\underline{\rm{Hom}}_{S}(Y, Z) \\
	&\simeq \underline{\rm{Hom}}_{T}(D, E).
	\end{align*}
	Now Theorem \ref{Coroallry 5.7} (3)   completes the proof.
\end{proof}

Stable equivalences of Morita type are a special case of the above proposition. Hence we have the following corollary.

\begin{corollary}
	Let $\La$ and $\La'$ be two Artin algebras of stable equivalence of Morita type. Assume pair of bimodules $({}_{\La'}M_{\La}, {}_{\La}N_{\La'})$ satisfies the required condition,  that is,
	  there exists a pair of bimodules ${}_{\La'}M_{\La}$ and ${}_{\La}N_{\La'}$ such that
	\begin{itemize}
		\item [$(1)$] $M$ and $N$ are projective as left and right modules, respectively;
		\item[$(2)$]  $M\otimes_{\La}N\simeq \La'\oplus P$   as $\La'\mbox{-}\La'$-bimodules for some projective $\La'\mbox{-}\La'$-bimodule $P$, 	and $N \otimes_{\La'}M\simeq \La\oplus Q $  as $\La\mbox{-}\La$-bimodules for some projective $\La\mbox{-}\La$-bimodule $Q$.
	\end{itemize} Then,  the following triangle equivalences of the singularity categories occur:
\begin{itemize}
	\item [$(i)$]	$$\D_{\rm{sg}}(\tiny {\left[\begin{array}{ll} \La & 0 \\ {}_{\La'}M_{\La} & \La' \end{array} \right]})\simeq \D_{\rm{sg}}(\tiny {\left[\begin{array}{ll} \La & 0 \\ \La & \La \end{array} \right]}).$$ 
\item[$(ii)$]
	$$\D_{\rm{sg}}(\tiny {\left[\begin{array}{ll} \La' & 0 \\ {}_{\La}N_{\La'} & \La \end{array} \right]})\simeq \D_{\rm{sg}}(\tiny {\left[\begin{array}{ll} \La' & 0 \\ \La' & \La' \end{array} \right]}).$$ 	
\end{itemize}	
In particular, by Corollary \ref{Coroallry 5.7}, we have $\D_{\rm{sg}}(T_2(\La))\simeq\D_{\rm{sg}}(T_2(\La'))$. Hence, 	
	$$\D_{\rm{sg}}(\tiny {\left[\begin{array}{ll} \La & 0 \\ {}_{\La'}M_{\La} & \La' \end{array} \right]})\simeq \D_{\rm{sg}}(\tiny {\left[\begin{array}{ll} \La' & 0 \\ {}_{\La}N_{\La'} & \La \end{array} \right]}).$$ 
	
\end{corollary}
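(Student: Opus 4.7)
The strategy is to reduce the corollary to two applications of the preceding proposition, glued together by Corollary~\ref{Coroallry 5.7} (the $n=2$ case is Corollary~\ref{corollary 6.5}). The key observation is that the hypotheses of the preceding proposition are precisely what a stable equivalence of Morita type delivers.

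First I would establish $(i)$. Setting $R=\La$, $S=\La'$ and viewing ${}_{\La'}M_\La$ as the bimodule in the triangular matrix ring, the preceding proposition asks for the tensor functor $-\otimes_{\La'}M:\mmod \La'\rt \mmod \La$ to induce a stable equivalence $\underline{\rm{mod}}\mbox{-}\La'\simeq \underline{\rm{mod}}\mbox{-}\La$. This, however, is essentially the definition of stable equivalence of Morita type: condition $(1)$ ensures that the tensor functor is exact and sends projectives to projectives (so that it descends to stable categories), and condition $(2)$ together with the fact that tensoring with a projective bimodule sends any module into a projective one shows that $-\otimes_{\La}N$ is a quasi-inverse of $-\otimes_{\La'}M$ on the stable categories. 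Applying the preceding proposition therefore yields $(i)$. Statement $(ii)$ is obtained by swapping the roles of $M$ and $N$ (and of $\La$ and $\La'$): the same argument applied to ${}_{\La}N_{\La'}$ inside $\tiny {\left[\begin{array}{ll} \La' & 0 \\ N & \La \end{array} \right]}$ gives the second singular equivalence.

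For the "in particular" part, I would apply Corollary~\ref{corollary 6.5} with $n=2$ to the functor $F=-\otimes_{\La}N:\mmod \La\rt \mmod \La'$. Condition~$(1)$ of stable equivalence of Morita type guarantees that $F$ is exact and that $F(\rm{prj}\mbox{-}\La)\subseteq \rm{prj}\mbox{-}\La'$, and $F$ induces a stable equivalence by the same bimodule argument as above. Hence the hypotheses of Corollary~\ref{Corolalry 5.10}$(2)$ (through Corollary~\ref{corollary 6.5}) are met, yielding a triangle equivalence $\D_{\rm{sg}}(T_2(\La))\simeq \D_{\rm{sg}}(T_2(\La'))$. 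Concatenating the three singular equivalences gives
\[
\D_{\rm{sg}}(\tiny {\left[\begin{array}{ll} \La & 0 \\ M & \La' \end{array} \right]}) \ \simeq\  \D_{\rm{sg}}(T_2(\La)) \ \simeq\ \D_{\rm{sg}}(T_2(\La'))\ \simeq\ \D_{\rm{sg}}(\tiny {\left[\begin{array}{ll} \La' & 0 \\ N & \La \end{array} \right]}),
\]
which is the asserted final equivalence.

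None of the steps involves substantial new input: the whole proof is really a bookkeeping exercise threading the preceding proposition and Corollary~\ref{corollary 6.5}. The only point that deserves a short, careful verification is the claim that $-\otimes_{\La'}M$ and $-\otimes_{\La}N$ are mutually quasi-inverse on the stable categories; this is routine from the associativity of tensor products, the isomorphisms in condition~$(2)$, and the observation that tensoring a module with a projective bimodule produces a projective module (so the summand $P$, respectively $Q$, is killed in the stable category). I would include one line spelling out this point for $(i)$ and then invoke it again by symmetry for $(ii)$.
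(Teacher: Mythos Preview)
Your proposal is correct and follows exactly the route the paper intends: the paper gives no detailed proof, only the remark ``Stable equivalences of Morita type are a special case of the above proposition,'' and your argument spells this out precisely, verifying that the tensor functors $-\otimes_{\La'}M$ and $-\otimes_{\La}N$ meet the hypotheses of the preceding proposition and then invoking Corollary~\ref{corollary 6.5} for the $T_2$ link. The only cosmetic discrepancy is that the paper's statement cites ``Corollary~\ref{Coroallry 5.7}'' (actually the label of Theorem~\ref{Coroallry 5.7}) for the equivalence $\D_{\rm{sg}}(T_2(\La))\simeq\D_{\rm{sg}}(T_2(\La'))$, whereas you cite the more specific Corollary~\ref{corollary 6.5}; both are valid, and yours is the more direct reference.
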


\subsection{Trivial extension and tensor rings}
First we recall some facts on modules over trivial extension rings. Throughout  this subsection,  let as usual  $R$ be a right notherian ring and let ${}_{R}M_{R}$ be a $R$-$R$-bimodule and $M_R$ is finitely generated. The trivial extension of $R$ by $M$ is a ring, denoted by $R\ltimes M$, with the underlying abelian group $R\oplus M$ and the multiplication: $(r, m)(r', m')=(rr', rm'+mr')$ for $r, r'$ in $R$ and $m, m'$ in $M.$ The actions involved in  the definition of the multiplication can be viewed in a natural way.  Then by our assumption $R\ltimes M$ is also a right notherian ring.  For further information on
$R\ltimes M$, we refer the reader to \cite{FGR}. Let us mention here a triangular matrix $ \tiny {\left[\begin{array}{ll} R & 0 \\ M & S \end{array} \right]}$ is a trivial extension of the product ring $ R \times S$ by  $M$. 
In this subsection we give a generalization of Theorem \ref{theorem 6.7} in terms of trivial extension rings. Similar to the triangular matrix rings we can identify a right module over $R\ltimes M$ with a pair $(X, \sigma)$, where $X$ is a right $R$-module and $\sigma:X\otimes_RM_{R}\rt X$ is a morphism of right $R$-modules with the property $\sigma \circ (\sigma\otimes_RM_{R})=0$. And a morphism $(X, \sigma)\rt (Y, \eta)$ of $T$-modules is just a morphism $f:M\rt N$ of $R$-modules  satisfying $\eta \circ (f\otimes_RM_{R})=f\circ \eta.$ We write $f:(M, \sigma)\rt (N, \eta)$. For any $R\ltimes M$-module $(X, \sigma)$, similar to the ones for triangular matrix rings and path rings, we have the following short exact sequence in $\mmod R\ltimes M,$ for any module $(X, \sigma)$
$$
0 \rt (X\otimes_{R}M_{R}, \sigma\otimes_{R}M)\st{[\sigma~~-1]^{t}}\rt (X\oplus (X\otimes_{R}M_{
R}), \tiny {\left[\begin{array}{ll} 0 & 0 \\ \rm{Id}_{X\otimes M} & 0 \end{array} \right]}  )\st{[1~~\sigma]}\rt (X, \sigma)\rt 0.
$$
See also \cite[Lemma 3.1]{C2} for the above short exact sequence.\\
For $R$-$R$-bimodule $M$, write $M^{\otimes_R0}=R$ and $M^{\otimes_R(j+1)}=M\otimes_RM^{\otimes_Rj}$ for $j\geqslant 0.$ We say that $M$ is {\it nilpotent}, if $M^{\otimes_R(n+1)}=0$ for some $n\geqslant0.$ From now on, assume that $M$ is nilpotent and for $n\geqslant0$, $M^{\otimes_R(n+1)}=0$. For a given $R\ltimes M$-module $(X, \sigma)$, denote by $\epsilon_1$ the above short exact sequence. Replacing $(X, \sigma)$ with $(X\otimes_RM_R, \sigma\otimes_RM)$ the starting term of $\epsilon_0$, we get
$$0 \rt (X\otimes M^{\otimes_R2}, \sigma \otimes M^{\otimes_{R}2})\rt ((X\otimes M)\oplus(X\otimes M^{\otimes_R 2}), \tiny {\left[\begin{array}{ll} 0 & 0 \\ \rm{Id}_{X\otimes M^{\otimes_R 2}} & 0 \end{array} \right]}))\rt (X\otimes M, \sigma\otimes_R M)\rt 0.$$ 
Inductively, by replacing with starting term of the short exact sequence $\epsilon_i$ we get the short exact sequence $\epsilon_{i+1}$
{\tiny \begin{equation*} 0 \rt (X\otimes M^{\otimes(i+1)}, \sigma \otimes M^{\otimes(i+1)})\rt ((X\otimes M^{\otimes i})\oplus(X\otimes M^{\otimes(i+1)}), \tiny {\left[\begin{array}{ll} 0 & 0 \\ \rm{Id}_{X\otimes M^{\otimes(i+1)}} & 0 \end{array} \right]}))\rt (X\otimes M^{\otimes i}, \sigma\otimes M^{\otimes i})\rt 0.\end{equation*}}
For brevity, in the above we drop the subscript ``$R$''. In  the rest, we shall also drop in case of need.  Denote by $\CM(R\ltimes M)$ the subcategory of $\mmod R\ltimes M$ consisting of all modules isomorphic to $(X\oplus (X\otimes_{R}M_{R}), \tiny {\left[\begin{array}{ll} 0 & 0 \\ \rm{Id}_{X\otimes M} & 0 \end{array} \right]}  )$ for some $X$ in $\mmod R.$ The middle term of each short exact sequence $\epsilon_i$ is in $\CM(R\ltimes M)$. Since $M^{\otimes_{R}(n+1)}=0$ we observe that the starting term in the $\epsilon_n$ has to be in $\CM(R\ltimes M)$. Hence by splicing together all the short exact sequences $\epsilon_1, \cdots, \epsilon_n$, we get an exact sequence in $\mmod R\ltimes M$ with all terms in $\CM(R\ltimes M)$ except possibly the end term

$$0 \rt (X\otimes M^{\otimes_n}, 0)\rt((X\otimes M^{\otimes n-1})\oplus(X\otimes M^{\otimes(n)}), H_n)\rt \cdots \rt (X\oplus (X\otimes_{R}M_{\La}), H_1  )\rt (X, \sigma)\rt0,  $$
where $\tiny {H_i=\left[\begin{array}{ll} 0 & 0 \\ \rm{Id}_{X\otimes M^{\otimes(n)}} & 0 \end{array} \right]})$, for $i=1,\cdots, n$. Note that the the starting term $(X\otimes M^{\otimes_n}, 0)$ is in $\CM(R\ltimes M)$ as it is indeed equal to $ ((X\otimes M^{\otimes n})\oplus(X\otimes M^{\otimes(n+1)}), \tiny {\left[\begin{array}{ll} 0 & 0 \\ \rm{Id}_{X\otimes M^{\otimes(n+1)}} & 0 \end{array} \right]})$.
 
\begin{lemma}
	The subcategory $\CM(R\ltimes M)$ is contravariantly finite.
\end{lemma}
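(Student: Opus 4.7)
The plan is to follow exactly the pattern used for the triangular matrix and path rings above: exhibit an explicit right $\CM(R\ltimes M)$-approximation for every $T$-module $(X,\sigma)$ (where we write $T=R\ltimes M$). The natural candidate is already supplied by the short exact sequence $\epsilon_1$ in the paper, namely the surjection
$$\pi_{(X,\sigma)}\ :=\ [1\ \ \sigma]\ :\ \bigl(X\oplus (X\otimes_R M_R),\, H_X\bigr)\ \lrt\ (X,\sigma),$$
whose source lies in $\CM(R\ltimes M)$ by definition. Thus the whole task reduces to showing that any $T$-linear morphism $f:(Z\oplus Z\otimes_R M_R,\,H_Z)\to (X,\sigma)$, with $(Z\oplus Z\otimes_R M_R,\,H_Z)\in \CM(R\ltimes M)$, factors through $\pi_{(X,\sigma)}$.

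For this factorization I would first unpack $f$ at the underlying $R$-module level as $f=[f_1\ \ f_2]$ with $f_1:Z\to X$ and $f_2:Z\otimes_R M_R\to X$, and then translate the $T$-linearity condition $\sigma\circ (f\otimes_R M)=f\circ H_Z$ into the two equations one obtains by evaluating on $(a,0)$ and $(0,b)$ in $(Z\otimes M)\oplus (Z\otimes M^{\otimes 2})$. The first equation yields $f_2=\sigma\circ (f_1\otimes_R M)$, so $f_2$ is determined by $f_1$. The second equation demands $\sigma\circ (f_2\otimes_R M)=0$; substituting the expression just obtained for $f_2$ turns this into $\sigma\circ(\sigma\otimes_R M)\circ (f_1\otimes_R M\otimes_R M)=0$, which holds automatically because $\sigma\circ(\sigma\otimes_R M)=0$ is precisely the defining identity of the $R\ltimes M$-module structure on $(X,\sigma)$. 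Hence the only genuine data in $f$ is the $R$-linear map $f_1$.

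With $f_1$ fixed, the lift is the ``diagonal'' assembly
$$g\ :=\ \begin{pmatrix} f_1 & 0 \\ 0 & f_1\otimes_R M \end{pmatrix}\ :\ \bigl(Z\oplus Z\otimes_R M,\,H_Z\bigr)\ \lrt\ \bigl(X\oplus X\otimes_R M,\,H_X\bigr),$$
and a direct matrix computation shows $H_X\circ (g\otimes_R M)=g\circ H_Z$ (both sides send $(a,b)$ to $(0,(f_1\otimes_R M)(a))$), so $g$ is $T$-linear. By construction $\pi_{(X,\sigma)}\circ g=[f_1,\ \sigma\circ (f_1\otimes_R M)]=[f_1,f_2]=f$, which is the required factorization. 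Therefore $\pi_{(X,\sigma)}$ is a right $\CM(R\ltimes M)$-approximation of $(X,\sigma)$, proving contravariant finiteness.

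The only delicate step is the bookkeeping in the second paragraph: keeping straight the identification $(Z\oplus Z\otimes_R M)\otimes_R M=Z\otimes_R M\,\oplus\, Z\otimes_R M^{\otimes 2}$ and tracking how the two ``sectors'' in the $T$-linearity condition split. Once one observes that the second sector vanishes purely because of the trivial-extension relation $\sigma\circ(\sigma\otimes_R M)=0$, everything else is routine, mirroring the approximation arguments in Lemma~\ref{lemma 6.1} and its triangular-matrix analogue. Nilpotency of $M$ is not needed for the approximation itself; it will only be used afterward (via the iterated exact sequences $\epsilon_i$) to control the global $\CM(R\ltimes M)$-dimension.
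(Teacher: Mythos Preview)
Your proof is correct and follows essentially the same approach as the paper: both use the epimorphism $[1\ \sigma]$ from $\epsilon_1$ as the right approximation and factor any incoming map through it via the diagonal lift $\left(\begin{smallmatrix} f_1 & 0 \\ 0 & f_1\otimes_R M \end{smallmatrix}\right)$. Your version is in fact more detailed, as you explicitly unpack the $T$-linearity condition to see that $f_2=\sigma\circ(f_1\otimes_R M)$ (which the paper leaves as ``a direct verification''), and your remark that nilpotency is irrelevant for the approximation step is accurate.
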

\begin{proof}
	Let $(X, \sigma)$ in $\mmod R\ltimes M$ be given. We shall show the the epimorphisem $(X\oplus (X\otimes_{R}M_{\La}), \tiny {\left[\begin{array}{ll} 0 & 0 \\ \rm{Id}_{X\otimes M} & 0 \end{array} \right]}  )\st{[1~~\sigma]}\rt (X, \sigma)\rt 0,$ appeared in the short exact sequence introduced in the above, works as a right $\CM(R\ltimes M)$-approximation. Assume $[f, g]:(Y\oplus(Y\otimes_{R}M_R))\rt (X, \sigma)$, where $f:Y\rt X$ and $g: Y\otimes_{R}M_R\rt X$, is given. It is a direct verification to check that $\tiny {\left[\begin{array}{ll} f & 0 \\ 0 & f\otimes M \end{array} \right]}: Y\oplus (Y\otimes_{R}M_R)\rt X\oplus (X\otimes_RM_R)$ is a morphism in $\mmod R\ltimes M$ and factors $[f, g]$ through $[1, \sigma]$. We are done.
\end{proof}
Now the observations provided in the above enables us to use our result to give a description for $\D_{\rm{sg}}(R\ltimes M)$.
\begin{theorem}\label{Theroem 6.10}
Let $R$ be a right notherian ring and let $R$-$R$-bimodule $M$ be nilpotent and as right $R$-module is finitely generated. Then, there is the following equivalence of triangulated categories
$$\D_{\rm{sg}}(R\ltimes M)\simeq \frac{\K^{-, \rm{p}}(\CM(R\ltimes M))}{\K^{\rm{b}}_{\rm{ac}}(\CM(R\ltimes M))}.$$
\end{theorem}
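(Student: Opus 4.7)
The plan is to apply Theorem \ref{Theorem 3.10} with $\CX=\CM(R\ltimes M)$. Three conditions must be verified: (i) $\CM(R\ltimes M)$ contains $\rm{prj}\mbox{-}R\ltimes M$; (ii) contravariant finiteness of $\CM(R\ltimes M)$ in $\mmod R\ltimes M$, which was just established in the preceding lemma; and (iii) finiteness of the global $\CM(R\ltimes M)$-dimension of $R\ltimes M$. The long exact sequence displayed immediately before the theorem is the candidate resolution for (iii); the whole argument reduces to verifying that this sequence is an $\CM(R\ltimes M)$-resolution.

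For (i), define $T_\la:\mmod R\rt \mmod R\ltimes M$ by $Y\mapsto (Y\oplus Y\otimes_R M_R,H)$, with $H$ the shift matrix from the definition of $\CM(R\ltimes M)$. A routine comparison of right $R\ltimes M$-actions identifies $T_\la$ with the induction functor $(-)\otimes_R(R\ltimes M)$. Since the forgetful functor $U:\mmod R\ltimes M\rt \mmod R$ is exact, its left adjoint $T_\la$ preserves projectives. Every finitely generated projective $R\ltimes M$-module is then a direct summand of some $(R\ltimes M)^m=T_\la(R^m)$, and since $\CM(R\ltimes M)$ is closed under summands by our general convention, (i) follows.

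For (iii), fix $(X,\sigma)\in\mmod R\ltimes M$ and consider the displayed length-$n$ exact sequence. Its interior terms are exactly $T_\la(X\otimes M^{\otimes i})$, and the leftmost term $(X\otimes M^{\otimes n},0)$ also equals $T_\la(X\otimes M^{\otimes n})$ because $M^{\otimes(n+1)}=0$; all terms therefore lie in $\CM(R\ltimes M)$. To confirm that this is an $\CM(R\ltimes M)$-resolution it suffices to show that each constituent short exact sequence $\epsilon_i$ (built in the paper just before the sequence is displayed) remains exact after applying $\Hom_{R\ltimes M}(Z,-)$ for every $Z\in\CM(R\ltimes M)$. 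Since $Z$ is a summand of some $T_\la(Y)$, the adjunction $T_\la\dashv U$ yields $\Hom_{R\ltimes M}(T_\la(Y),-)\cong \Hom_R(Y,U(-))$, reducing the task to showing that $U(\epsilon_i)$ is split exact in $\mmod R$. But the underlying $R$-module sequence of $\epsilon_i$ is
$$0\rt X\otimes M^{\otimes(i+1)}\rt (X\otimes M^{\otimes i})\oplus(X\otimes M^{\otimes(i+1)})\rt X\otimes M^{\otimes i}\rt 0,$$
which is manifestly split. Combining (i), (ii) and (iii), Theorem \ref{Theorem 3.10} applied to $\CX=\CM(R\ltimes M)$ delivers the desired triangle equivalence. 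The only mildly delicate point in the plan is the $\CM(R\ltimes M)$-exactness check in (iii); the adjunction $T_\la\dashv U$ reduces it to the triviality above.
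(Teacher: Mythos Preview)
Your proposal is correct and follows the same route as the paper: verify the hypotheses of Theorem \ref{Theorem 3.10} for $\CX=\CM(R\ltimes M)$. The paper's own proof is a one-liner that invokes Theorem \ref{Theorem 3.10}, appeals to the preceding lemma and the displayed long exact sequence, and cites \cite[Corollary 6.1]{FGR} for the inclusion $\rm{prj}\mbox{-}(R\ltimes M)\subseteq\CM(R\ltimes M)$; you instead supply that inclusion directly by identifying $T_\la$ with induction along $R\to R\ltimes M$, and you make explicit the $\Hom_{R\ltimes M}(Z,-)$-exactness of the resolution via the adjunction $T_\la\dashv U$ (the paper leaves this implicit in the approximation argument of the preceding lemma). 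These are presentational rather than strategic differences.
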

\begin{proof}
	The theorem is an immediate consequence of Theorem \ref{Theorem 3.10} and in conjunction with facts provided in the above. Just note that by \cite[Corollary 6.1]{FGR},  $\CM(R\ltimes M)$ contains the projective modules.
\end{proof}
With nilpotent $R$-$R$-bimodule $M$ we can construct another right notherian ring called {\it tensor ring}, that is, $T_R(M)= \oplus_{i=0}^{\infty}M^{\otimes_Ri}$. As mentioned in \cite[Section 3]{CL}, one can identify right modules in $\mmod T_{R}(M)$ with the representations of the endofunctor $-\otimes_RM_R:\mmod R\rt \mmod R$. By a representation of $-\otimes_{R} M$, we mean a pair $(X, u)$ with $X$ in $\mmod R$ and $u:X\otimes_{R}M\rt X.$ In fact, the  modules over $R\ltimes M$ can be considered as the representations of $-\otimes_RM_R$.  A morphism between two representations is the same as one in $\mmod R\ltimes M.$ Such identifications allow us to consider $\mmod R\ltimes M$ as an  abelian subcategory of $\mmod T_R(M)$. For each module $X$ in $\mmod R$, following \cite[Section2]{CL} we define $\rm{Ind}(X)=\oplus^{n}_{i=0}X\otimes_{R}M^{\otimes_{R}i}$, remember $n$ is an integer with $M^{\otimes_{R}(n+1)}=0$, and moreover, a morphism $c_X:\rm{Ind}(X)\otimes_{R}M_R\rt \rm{Ind}(X)$ such its restriction to $X\otimes^{\otimes_{R}i}$, $i>0$, is the inclusion into $\rm{Ind}(X)$. This defines
 the $T_R(M)$-module $(\rm{ind}(X), c_X)$. The assignment $(\rm{Ind}(X), c_X)$ to each $X$ gives rise to a functor $\rm{Ind}:\mmod R\rt \mmod T_R(M)$. As shown in \cite[lemma 2.1]{CL} it is a left adjoint of the forgetful functor $U:\mmod T_R(M)\rt \mmod R$ by sending $(X, u)$ to the underlying module $X.$ Due to this adjoint pair one can see that any projective module in $\mmod T_R(M)$ is isomorphic to $(\rm{Ind}(P), c_P)$ for some $P$ in $\rm{prj}\mbox{-}R.$ Furthermore, for any module $(X, u)$ in $\mmod T_R(M)$, there is an exact sequence in $\mmod T_R(M)$ as the following 
$$0 \rt (\rm{Ind}(X\otimes_{R}M), c_{X\otimes_{R}M})\rt (\rm{Ind}(X), c_X)\rt (X, u)\rt 0.$$
We refer the reader to \cite[Section 2]{CL} for more details of the construction  of the above sequence. Denote by $\CM(T_R(M))$ the subcategory of $\mmod T_R(M)$ formed by all modules which are isomorphic to $(\rm{Ind}(X), c_X)$ for some $X$ in $\mmod R.$ By using the above sequence one can prove that $\CM(T_R(M))$ is a contravariantly finite subcategory in $\mmod T_R(M)$ and the global dimension relative to it is at most one. We leave the proofs to the reader as they can be proved in the similar

 cases in the preceding subsections. Consequently, the subcategory $\CM(T_R(M))$ satisfies all the required conditions of Theorem \ref{Theorem 3.10}. Therefore, we have the following result.
\begin{theorem}
	Let $R$ and $M$ be the same as in Theorem \ref{Theroem 6.10}. Then, there is the following equivalence of triangulated categories
	$$\D_{\rm{sg}}(T_R(M))\simeq \frac{\K^{-, \rm{p}}(\CM(T_R(M)))}{\K^{\rm{b}}_{\rm{ac}}(\CM(T_R(M)))}.$$
\end{theorem}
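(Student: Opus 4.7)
The plan is to deduce this theorem as a direct application of Theorem \ref{Theorem 3.10} with the ring taken to be $T_R(M)$ and $\CX = \CM(T_R(M))$. Following the pattern of the trivial-extension case (Theorem \ref{Theroem 6.10}) and the triangular-matrix case (Theorem \ref{theorem 6.7}), I must verify three properties of $\CM(T_R(M))$ inside $\mmod T_R(M)$: (a) it contains $\rm{prj}\mbox{-}T_R(M)$; (b) it is contravariantly finite; and (c) the global $\CM(T_R(M))$-dimension of $T_R(M)$ is finite.

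For (a), I would use that the regular module $T_R(M)_{T_R(M)}$ is itself isomorphic to $(\rm{Ind}(R), c_R)$, since $\rm{Ind}(R)=\bigoplus_{i=0}^{n}R\otimes_R M^{\otimes_R i}=\bigoplus_{i=0}^{n}M^{\otimes_R i}=T_R(M)$ with $c_R$ encoding right multiplication by $M$; hence every finitely generated projective $T_R(M)$-module is a summand of some $(\rm{Ind}(R^m), c_{R^m})$, and closure of $\CM(T_R(M))$ under summands yields (a). For (b), the canonical epimorphism $(\rm{Ind}(X), c_X)\twoheadrightarrow (X,u)$ from the displayed short exact sequence just before the statement is a right $\CM(T_R(M))$-approximation of $(X,u)$: any morphism $(\rm{Ind}(Y), c_Y)\to (X,u)$ corresponds via the adjunction $(\rm{Ind}, U)$ to an $R$-linear map $Y\to X$, which lifts along the split $R$-linear surjection $\rm{Ind}(X)\twoheadrightarrow X$ (splitting because $X$ sits as the degree-zero summand of $\rm{Ind}(X)=\bigoplus_{i=0}^n X\otimes_R M^{\otimes_R i}$), and the lift transports back under adjunction to the desired factorization. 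For (c), the same short exact sequence resolves $(X,u)$ by $\CM(T_R(M))$-objects of length at most one: its kernel $(\rm{Ind}(X\otimes_R M), c_{X\otimes_R M})$ lies in $\CM(T_R(M))$ by definition, and the Hom-exactness requirement when testing against $(\rm{Ind}(Y), c_Y)$ reduces via the adjunction to applying $\Hom_R(Y,-)$ to the underlying $R$-module sequence.

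The main obstacle, albeit mild, is the Hom-exactness in (c); everything hinges on the observation that the underlying $R$-module sequence $0\to X\otimes_R M \to \rm{Ind}(X)\to X\to 0$ is split by construction, which is transparent from the decomposition $\rm{Ind}(X)=\bigoplus_{i=0}^n X\otimes_R M^{\otimes_R i}$. Once (a)--(c) are in place, Theorem \ref{Theorem 3.10} immediately yields the claimed triangle equivalence $\D_{\rm{sg}}(T_R(M))\simeq \K^{-,\rm{p}}(\CM(T_R(M)))/\K^{\rm{b}}_{\rm{ac}}(\CM(T_R(M)))$.
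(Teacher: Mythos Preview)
Your proposal is correct and follows exactly the paper's approach: verify that $\CM(T_R(M))$ contains the projectives, is contravariantly finite, and gives finite relative global dimension (all via the adjunction $(\mathrm{Ind},U)$ and the canonical short exact sequence), then invoke Theorem~\ref{Theorem 3.10}. One small slip worth correcting: the underlying $R$-module of the kernel $(\mathrm{Ind}(X\otimes_R M), c_{X\otimes_R M})$ is $\mathrm{Ind}(X\otimes_R M)=\bigoplus_{j\geq 1} X\otimes_R M^{\otimes_R j}$, not just $X\otimes_R M$; the sequence is still visibly $R$-split from the grading on $\mathrm{Ind}(X)$, so your Hom-exactness argument via the adjunction goes through unchanged.
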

One important advantage of the  above theorem, and also the similar results in Theorems \ref{Theroem 6.10}, \ref{theorem 6.7} and \ref{Theorem 6.3}, is to establish a close connection between the singularity category of $\D_{\rm{sg}}(T_R(M))$ and the singularity category $\D_{\rm{sg}}(R)$ of the base ring $R.$
\section{Acknowledgments}
 I would like to thank my wife  for her support and patience during  this work.


\begin{thebibliography}{9999}
\bibitem{AI} {\sc Takuma Aihara and Osamu Iyama,} {\sl  Silting mutation in triangulated categories,} J. Lond. Math. Soc. (2) {\bf 85} (2012), no. 3, 633-668.
	
	
\bibitem{AJS}
{\sc L.~Alonso Tarr\'{\i}o, A.~Jerem\'{\i}as L\'{o}pez and M.~J.~Souto Salorio,} {\sl 
Localization in categories of complexes and unbounded resolutions,}
Canad. J. Math. {\bf 52} (2000), no. 2, 225-247.	
			
\bibitem{HKLY} {\sc H. Angeleri, S. Konig, Q. H. Liu and D. Yang,} {\sl  Ladders and simplicity of derived module categories,} J. Algebra {\bf 472} (2017), 15-66 
\bibitem{As} {\sc H. Asashiba,} {\sl  The derived equivalence classification of representation-finite selfinjective algebras,} J. Algebra {\bf 214} (1999), no. 1, 182-221.


\bibitem{A} {\sc M. Auslander,} {Representation dimension of Artin algebras,} Queen Mary College Notes, 1971.

\bibitem{A3} {\sc M. Auslander,} {\sl Isolated singularities and existence of almost split sequences,} Notes by Louise Unger., Representation theory II, Groups and orders, Proc. 4th Int. Conf., Ottawa/Can. 1984, Lect. Notes Math. 1178, 194-242 (1986).
\bibitem{AR} {\sc M. Auslander and I. Reiten,} {\sl  Stable equivalence of Artin algebras,} In: Proceedings of the Conference on Orders, Group Rings and Related Topics. Springer Lecture Notes in Mathematics (Ohio State University, Columbus, 1972), pp. 8-71, vol. 353 (1973).  
\bibitem {ARS}{ \sc M. Auslander, I. Reiten and S. O. Smal${\o}$,}  {\sl Representation Theory of Artin Algebras,} Cambridge Studies in Adv.
Math. 36., Cambridge Univ. Press, 1995.

\bibitem{B} {\sc A. Beligiannis,} {\sl  The homological theory of contravariantly finite subcategories: Auslander–
Buchweitz contexts, Gorenstein categories and (co-)stabilization,} Comm. Algebra {\bf 28}
(2000), 4547-4596.
 \bibitem{BR} {\sc A. Beligiannis and I. Reiten,} {\sl  Homological and homotopical aspects of torsion theories,} Mem. Amer. Math. Soc. {\bf 188}, 2007.
\bibitem{Bu} {\sc R. O. Buchweitz,} {\sl  Maximal Cohen-Macaulay Modules and Tate Cohomology over Gorenstein  Rings,} unpublished manuscript, 1987.
\bibitem{C4} {\sc X. W. Chen,}  {\sl Gorenstein Homological Algebra of Artin Algebras,} Postdoctoral Report, USTC, 2010.
\bibitem{C5} {\sc X. W. Chen,} {\sl Relative singularity categories and Gorenstein-projective modules,} Mathematische Nachrichten, {\bf 284} (No. 2-3) (2011), 199-212.
\bibitem{C2} {\sc X. W. Chen,} {\sl  Singular equivalences of trivial extensions,} Communications in Algebra, {\bf 44} (5) (2016), 1961-1970.
\bibitem{C6} {\sc X. W. Chen,} {\sl  The singularity category of a quadratic monomial algebra,}
The Quarterly Journal of Mathematics, {\bf 69} (2018), 1015-1033.
\bibitem{CC} {\sc X. Chen and  X. W. Chen,} {\em The lower extension groups and quotient categories,} Comptes Rendus Mathematique, {\bf357} (11-12) (2019), 832-840.
\bibitem{CL} {\sc  X. W. Chen and  M. Lu,} {\sl Gorenstein homological properties of tensor rings,} Nagoya Mathematical Journal, {\bf 237} (2020), 188-208. 
 \bibitem{CS} {\sc X. W. Chen and L. G. Sun,} {\sl  Singular equivalences of Morita type,} Preprint 2012.
\bibitem{D} {\sc A. Dugas,} {\sl  Tilting mutation of weakly symmetric algebras and stable equivalence,} Algebr. Represent. Theory {\bf 17} (2014), no. 3, 863-884.
\bibitem{EE} {\sc E. Enochs and S.  Estrada,} {\sl  Projective representations of quivers,} Comm. Algebra, {\bf 33} (2005) 3467-3478.
\bibitem{EH} {\sc E. Enochs and  I. Herzog,} {\sl  A homotopy of quiver morphisms with applications to representations,} Canad. J. Math. {\bf 51} (2) (1999) 294-308.
\bibitem{EJ} {\it E. E. Enochs and O. M. G. Jenda,} {\sl  Relative Homological Algebra,} de Gruyter Expositions in Mathematics Vol. 30 (Walter de Gruyter, Berlin, New York, 2000).
\bibitem{EHS} {\sc H.  Eshraghi,  R.  Hafezi,  and  Sh.  Salarian,} {\sl Total   acyclicity   for   complexes   of representations   of   quivers,} Comm.   Algebra {\bf41} (2013),   no.   12,   4425-4441.
\bibitem{FGR} {\sc R. Fossum, P. Griffith and I. Reiten,} {\sl Trivial extensions of abelian categories with applications to ring theory,} Lecture Notes in Mathematics 456 (Springer, Berlin, 1975).
\bibitem{Gr} {\sc E. L. Green,} {\sl  On the representation theory of rings in matrix form,} Pacific J. Math. {\bf 100} (1982) 123-128.
\bibitem{HK} {\sc R. Hafezi and  M. H. Keshavarz,} {\sl Categorical Resolutions of Bounded Derived Categories,}  Accepted for publications in Science China Mathematics.
\bibitem{HV} {\sc A. Haghany and K. Varadarajan,} {\sl  Study of modules over formal triangular matrix rings,} J. Pure Appl. Algebra {\bf 147} (2000), 41-58.
\bibitem{HX} {\sc W. Hu and C.  Xi,} {\sl Derived equivalences and stable equivalences of Morita type, II,}  Rev. Mat. Iberoam. {\bf 34} (2018), no. 1, 59-110.  
\bibitem{KY} {\sc M. Kalck and D. Yang,} {\sl Relative singularity categories I: Auslander resolutions,}Adv. Math. {\bf 301} (2016), 973-1021. 
\bibitem{IY} {\sc O. Iyama and D. Yang,} {Silting reduction and Calabi-Yau reduction of triangulated categories,} Trans. Amer. Math. Soc. {\bf 370} (2018), no. 11, 7861-7898.

\bibitem{KV} {\sc B. Keller and D. Vossieck}, {\em Sous les cat\'{e}gories d\'{e}riv\'{e}es,}
C.R. Acad. Sci. Paris, t. {\bf 305} S\'{e}rie I  (1987), 225--228.
 \bibitem{L}{\sc Y. Liu,} {\sl On stable equivalences induced by exact functors,}  Proc. Amer. Math. Soc. {\bf 134} (2006), no. 6, 1605-1613. 
\bibitem{LX1} {\sc Y.  Liu and C. Xi,} {\sl   Construction of stable equivalences of Morita type for finite-dimensional algebras. I,}  Trans. Amer. Math. Soc. {\bf 358} (2006), no. 6, 2537-2560.
 \bibitem{LX2} {\sc Y. Liu and C. Xi,} {\sl  Constructions of stable equivalences of Morita type for finite dimensional algebras. II,}  Math. Z. {\bf 251} (2005), no. 1, 21-39.  
 \bibitem{LX3} {\sc Y. Liu and C. Xi,} {\sl  Constructions of stable equivalences of Morita type for finite-dimensional algebras. III,}  J. Lond. Math. Soc. (2) {\bf 76} (2007), no. 3, 567-585. 
\bibitem{O}  {\sc Y. Ogawa,} {\sl  Singular equivalences of functor categories via Auslander-Buchweitz approximations,}  J. Algebra {\bf 546}  (2020), 734-752.
\bibitem{Or} {\sc  D. O. Orlov,} {\sl  Triangulated categories of singularities and D-branes in Landau-Ginzburg models,} Proc. Steklov Inst. Math. {\bf 246} (2004), no. 3, 227-248.
	
\bibitem{W}{\sc J. Wei,} {\sl  Relative singularity categories, Gorenstein objects and silting theory,} J. Pure Appl. Algebra {\bf 222} (2018), no. 8, 2310-2322.	
\bibitem{YZ2} {\sc H. You and G. Zhou,} {\sl  Relative singularity categories and relative defect categories,}  Preprint 2015.
\bibitem{ZZ}  {\sc G. Zhou and A. Zimmermann,} {\sl On singular equivalences of Morita type,} J. Algebra {\bf 385} (2013), 64-79.
\end{thebibliography}
\end{document}